\documentclass[10pt]{amsart}

\usepackage{xypic,amscd,amssymb,latexsym}
\usepackage{amsmath}	
\setlength{\parindent}{0.5em}

\usepackage[breaklinks=true]{hyperref}

\usepackage{mathrsfs}

\usepackage{url}

\textwidth=14.5cm \oddsidemargin=1cm \evensidemargin=1cm

\usepackage{graphicx}

\usepackage{cancel}

\usepackage{caption}
\usepackage{subcaption}

\begin{document}
\pagenumbering{arabic}

\newtheorem{theorem}{Theorem}[section]
\newtheorem{lemma}[theorem]{Lemma}
\newtheorem{proposition}[theorem]{Proposition}
\newtheorem{corollary}[theorem]{Corollary}
\newtheorem{definition}[theorem]{Definition}
\newtheorem{remark}[theorem]{Remark}
\newtheorem{notation}[theorem]{Notation}

\newcommand{\vs}[0]{\vspace{2mm}}

\newcommand{\mcal}[1]{\mathcal{#1}}
\newcommand{\ul}[1]{\underline{#1}}
\newcommand{\ulu}[2]{\underset{#2}{\underline{#1}}}
\newcommand{\ol}[1]{\overline{#1}}
\newcommand{\til}[1]{\widetilde{#1}}
\newcommand{\wh}[1]{\widehat{#1}}

\address{School of Mathematics, Korea Institute for Advanced Study (KIAS), 85 Hoegiro Dongdaemun-gu, Seoul 130-722, Republic of Korea}

\email[H.~Kim]{hkim@kias.re.kr, hyunkyu87@gmail.com}

\author{Hyun Kyu Kim}

\numberwithin{equation}{section}

\title[Ratio coordinates for higher Teichm\"uller spaces]{Ratio coordinates for higher Teichm\"uller spaces}

\begin{abstract}
We define new coordinates for Fock-Goncharov's higher Teichm\"uller spaces for a surface with holes, which are the moduli spaces of representations of the fundamental group into a reductive Lie group $G$. Some additional data on the boundary leads to two closely related moduli spaces, the $\mathscr{X}$-space and the $\mathscr{A}$-space, forming a cluster ensemble. Fock and Goncharov gave nice descriptions of the coordinates of these spaces in the cases of $G = {\rm PGL}_m$ and $G={\rm SL}_m$, together with Poisson structures. We consider new coordinates for higher Teichm\"uller spaces given as ratios of the coordinates of the $\mathscr{A}$-space for $G={\rm SL}_m$, which are generalizations of Kashaev's ratio coordinates in the case $m=2$. Using Kashaev's quantization for $m=2$, we suggest a quantization of the system of these new ratio coordinates, which may lead to a new family of projective representations of mapping class groups. These ratio coordinates depend on the choice of an ideal triangulation decorated with a distinguished corner at each triangle, and the key point of the quantization is to guarantee certain consistency under a change of such choices. We prove this consistency for $m=3$, and for completeness we also give a full proof of the presentation of Kashaev's groupoid of decorated ideal triangulations.
\end{abstract}

\maketitle

\tableofcontents

\section{Introduction, background, overview, and motivation}
\label{sec:introduction}

\subsection{Summary}

Fock and Goncharov \cite{FG03} defined the notion of a {\em cluster ensemble} as a pair of `positive spaces' $(\mathscr{X}, \mathscr{A})$, which is a generalization of  cluster algebras of Fomin and Zelevinsky \cite{FZ}. Here, {\em positive spaces} mean varieties equipped with an atlas whose transition maps  are {\em positive} rational maps, that is, maps in the form $f_1/f_2$ where $f_1,f_2$ are $\mathbb{Z}_{>0}$-linear combinations of monomials in the corresponding coordinates. A cluster ensemble relies on a similar combinatorial data as a cluster algebra does, and the positive transition maps should be given by certain formulas, depending on this  combinatorial data.
Main examples of cluster ensembles come from what are called the {\em higher Teichm\"uller spaces}, the basic case being the usual Teichm\"uller spaces of Riemann surfaces with boundaries and/or punctures. The higher Teichm\"uller spaces for compact Riemann surfaces are first discovered by N. J. Hitchin \cite{H92} (as pointed out to the author by a referee), and their cluster structures in the cases of bordered Riemann surfaces are extensively studied by Fock and Goncharov \cite{FG06}.

\vs

Let $S$ be a Riemann surface. Roughly speaking, the Teichm\"uller space of $S$ is a certain component of the representation variety ${\rm Hom}(\pi_1(S), {\rm PSL}_2(\mathbb{R})) / {\rm PSL}_2(\mathbb{R})$, while the higher Teichm\"uller space is defined similarly where ${\rm PSL}_2(\mathbb{R})$ is replaced by ${\rm PSL}_m(\mathbb{R})$, or more generally by any reductive real Lie group \cite{FG06}. The Teichm\"uller space, or more precisely, a certain fiber bundle over it, can be parametrized by Penner's lambda length coordinates \cite{Penner}, which depend on the choice of an ideal triangulation of $S$. From lambda length coordinates one obtains two other types of coordinate systems, namely Thurston's shear coordinates and Kashaev's ratio coordinates. Using the former, Chekhov-Fock \cite{Fo} \cite{FC} constructed a quantization of the Teichm\"uller space, while Kashaev \cite{Kash98} came up with another quantization using the latter. Then Fock-Goncharov \cite{FG09} established a more general construction, in particular giving a quantization of the higher Teichm\"uller spaces for all $m\ge 2$, with respect to a higher generalized version of shear coordinates. In the present paper, we constuct an analog of Kashaev's ratio coordinates for the higher Teichm\"uller spaces, and suggest a corresponding quantization. On one hand, this quantization is most likely not `equivalent' to the one of Fock-Goncharov, in view of \cite{Ki12}. On the other hand, there is a chance that this construction is realized in a certain tensor category of representations of a Hopf algebra, as an analog of the work of Frenkel-Kim \cite{FrKi}, where quantum Teichm\"uller theory is recovered by the representation theory of the `modular double of the Borel subalgebra of the quantum group $\mcal{U}_q(\frak{sl}(2,\mathbb{R}))$'. We expect that if we replace $\mcal{U}_q(\frak{sl}(2,\mathbb{R}))$ by $\mcal{U}_q(\frak{sl}(m,\mathbb{R}))$, we might recover the quantum higher Teichm\"uller theory constructed in the present paper by the representation theory of this Hopf algebra \cite{Ip14}.

\subsection{Quantization of Teichm\"uller spaces}
\label{subsec:quantization_of_Teichmuller_spaces}

Let $S$ be a compact oriented real $2$-dimensional topological surface of genus $g$ with $n\ge 0$ holes; one can think of $S$ as being obtained from a closed surface by removing $n$ non-intersecting open discs. The {\em classical Teichm\"uller space $\mathscr{T}_S$} of $S$ is defined as the space of all complex structures on $S$ modulo diffeomorphisms $S\to S$ isotopic (rel. boundary) to identity. It is equipped with the canonical {\em Weil-Petersson Poisson structure}, which is preserved under the action on $\mathscr{T}_S$ of the {\em mapping class group} $\Gamma_S$ of $S$, defined as the group of all orientation-preserving diffeomorphisms $S \to S$ modulo isotopy (rel. boundary). Therefore one can consider `quantizing' the Poisson manifold $\mathscr{T}_S$, as well as quantizing the $\Gamma_S$-action on it. This roughly means that we deform the commutative algebra of smooth functions on $\mathscr{T}_S$ `in the direction of the Poisson structure', by a one-parameter family of non-commutative algebras $\mathscr{T}^h_S$ with real parameter $h$, while to each element of $\Gamma_S$ we assign an algebra automorphism of $\mathscr{T}_S^h$ that recovers the classical $\Gamma_S$-action as $h \to 0$ and such that this assignment $\Gamma_S \to {\rm Aut}(\mathscr{T}_S^h)$ is a group homomorphism. Quantization of some different versions of the Teichm\"uller space was acquired by Kashaev \cite{Kash98} and by Chekhov-Fock \cite{Fo} \cite{FC} independently, with the help of a special function named {\em quantum dilogarithm} \cite{FK} \cite{F}. As usual in the quantum theories, the algebra $\mathscr{T}_S^h$ is realized as $*$-algebra of operators on a Hilbert space $\mathscr{H}$, and the automorphism of $\mathscr{T}_S^h$ associated to $g\in \Gamma_S$ as conjugation by some unitary operator $\rho_h(g)$ on $\mathscr{H}$. So one gets an assignment $g\mapsto \rho_h(g)$, which is a projective representation of $\Gamma_S$ on $\mathscr{H}$, meaning that for each $g_1,g_2\in \Gamma_S$ one has $\rho_h(g_1 g_2) = c_{g_1,g_2} \rho_h(g_1) \rho_h(g_2)$ for some constant $c_{g_1,g_2} \in {\rm U}(1)$. This family of projective representations of $\Gamma_S$ on $\mathscr{H}$ is often considered as one of the main results of {\em quantum Teichm\"uller theory}.

\vs

Kashaev and Chekhov-Fock used slightly different coordinate systems of the Teichm\"uller space, both based on Penner's lambda length coordinates \cite{Penner}. In fact, these coordinate systems parametrize different spaces, which we shall shortly see. First, by the Uniformization Theorem, one can identify $\mathscr{T}_S$ with the space of all faithful representations $\rho : \pi_1(S) \to {\rm PSL}_2(\mathbb{R})$ with discrete image, modulo conjugation by ${\rm PSL}_2(\mathbb{R})$. Each point $\rho$ of $\mathscr{T}_S$ equips $S$ with a hyperbolic metric, by realizing $S$ as the quotient of the upper half-plane by the action of the discrete group $\rho(\pi_1(S))$. For any chosen $\rho$, a boundary component $C$ of $S$ is called {\em cuspidal}, if a loop in $S$ freely homotopic to $C$ is sent via $\rho$ to a parabolic element of ${\rm PSL}_2(\mathbb{R})$. The {\em holed Teichm\"uller space $\mathscr{T}_S^+$} is defined as the branched covering of $\mathscr{T}_S$, whose fiber at a point of $\mathscr{T}_S$ consists of all possible choices of orientations on the non-cuspidal boundary components of $S$. The subspace $\mathscr{T}_S^u$ of $\mathscr{T}_S$ consisting of points such that all $n$ holes are cuspidal can be viewed as the {\em Teichm\"uller space of a surface $S$ with $n$ punctures}, where punctures are thought to be at geodesic infinity. The total space of the fiber bundle $\mathscr{T}_S^d \to \mathscr{T}_S^u$, whose fiber at each point is the space of all possible choices of horocycles at the $n$ punctures and therefore isomorphic to $\mathbb{R}_{>0}^n$, is called the {\em decorated Teichm\"uller space}. From now on, we assume $n\ge 1$.

\vs

For the lambda length coordinate system which parametrizes $\mathscr{T}_S^d$, we may assume that all $n$ holes are punctures, and choose an {\em ideal triangulation $\tau$ of $S$}, that is, a collection of mutually non-intersecting unoriented paths between punctures up to homotopy (rel. punctures), such that the complementary region in $S$ is a disjoint union of triangles. Per each point of $\mathscr{T}_S^d$, stretch the edges of $\tau$ to geodesics with respect to the relevant hyperbolic metric on $S$. Then we are given a choice of a horocycle at each puncture; in the universal cover $\mathbb{H}$, the upper half-plane, a horocycle based at a point on $\mathbb{R}$ can be thought of as an Euclidean circle in $\mathbb{H}$ tangent to $\mathbb{R}$ at the relevant point, and a horocycle based at the point $\infty$ can be thought of as a Euclidean straight line in $\mathbb{H}$ parallel to $\mathbb{R}$. To each edge $e$ of $\tau$ we assign the {\em lambda length} $\lambda_e = \exp(\delta_e/2)$, where $\delta_e$ is the hyperbolic length of the edge truncated at both ends by the horocycles (so $\delta_e$ could be negative too). Then there is no restriction on or algebraic relations among the $6g-6+3n$ lambda length coordinate functions, and thus we identify $\mathscr{T}_S^d$ with $\mathbb{R}_{>0}^{6g-6+3n}$ real analytically (\cite{Penner}):
\begin{align*}
{\renewcommand{\arraystretch}{1.2}
\begin{array}{rcl}
\mathscr{T}_S^d & \stackrel{\sim}{\longrightarrow} & \mathbb{R}_{>0}^{6g-6+3n} \\
{\rm point} & \longmapsto & \left\{ \lambda_e \right\}_{e \, : \, {\rm edges~of~}\tau}.
\end{array}}
\end{align*}
The pullback of the Weil-Petersson $2$-form on $\mathscr{T}_S^u$ to $\mathscr{T}_S^d$ can be written in terms of lambda lengths as
\begin{align}
\label{eq:2-form}
\omega = - 2 \sum_t \left( d\log \lambda_{e_{t,1}} \wedge d\log \lambda_{e_{t,2}}
+ d\log \lambda_{e_{t,2}} \wedge d\log \lambda_{e_{t,3}}
+ d\log \lambda_{e_{t,3}} \wedge d\log \lambda_{e_{t,1}} \right),
\end{align}
where $t$ runs through all ideal triangles of the chosen ideal triangulation $\tau$, where $e_{t,1}, e_{t,2}, e_{t,3}$ are the edges of the triangle $t$ labeled in a counter-clockwise manner. Meanwhile, to construct a quantization which does not depend on the choice of $\tau$, we investigate what happens if we choose another ideal triangulation. There are simple ways of changing an ideal triangulation to another, called {\em flips}, described as follows. For an edge $e$ of an ideal triangulation $\tau$, consider the ideal quadrilateral formed by the two ideal triangles having $e$ as one of their sides. Now replace $e$ by the other diagonal $e'$ of this ideal quadrilateral; the edges of $\tau$ other than $e$, together with $e'$, form a new ideal triangulation $\tau'$ of $S$, and we say that $\tau'$ is obtained from $\tau$ by applying a flip along $e$. Then, only one lambda length changes, namely $\lambda_e \leadsto \lambda_{e'}$, and there is a formula relating the new lambda length $\lambda_{e'}$ to the old lambda lengths; if $a,b,c,d$ are the edges of the above mentioned ideal quadrilateral, labeled cyclically in a counter-clockwise way, then the {\em Ptolemy relation} holds:
\begin{align}
\label{eq:Ptolemy_relation}
\lambda_e \lambda_{e'} = \lambda_a \lambda_c + \lambda_b \lambda_d.
\end{align}
Therefore the new coordinate $\lambda_{e'}$ can be written as a rational function in the old coordinates, and one can prove that the $2$-form \eqref{eq:2-form} is invariant under this change-of-coordinate transformation; namely, after a flip, the pullback of the $2$-form $\omega$ for $\tau$ is coincides with the expression like in RHS of \eqref{eq:2-form} for the new triangulation $\tau'$.

\vs

Meanwhile, for an edge $e$ of an ideal triangulation $\tau$, consider the unique ideal quadrilateral of $\tau$ having $e$ as a diagonal, and label the four sides of it by $a,b,c,d$ counter-clockwise, so that $a,b,e$ form an ideal triangle of $\tau$ and $c,d,e$ another. Define {\em Thurston's shear coordinate $Z_e$} by $\log \frac{\lambda_b \lambda_d}{\lambda_a \lambda_c}$, that is, as the logarithm of a cross-ratio of four lambda lengths. Then $Z_e$ measures the signed hyperbolic distance of the two points on $e$ which are the feet of the perpendiculars dropped from the two vertices of the ideal quadrilateral that are not the endpoints of $e$. This time, $Z_e$'s do {\em not} depend on the choice of horocycles, and satisfy a certain linear equation per each puncture: the sum of shears along the edges traverse to a loop in $S$ freely homotopic to a puncture is zero. Modulo these equations, the shear coordinates parametrize the genuine Teichm\"uller space $\mathscr{T}_S^u$ of the $n$-punctured surface $S$. In fact, if we forget these $n$ equations, then the shear coordinates parametrize $\mathscr{T}_S^+$, where the absolute value of the sum of shears around a hole gives a geodesic length of the hole, while its sign gives the orientation of the hole; thus we identify $\mathscr{T}_S^+$ with $\mathbb{R}^{6g-6+3n}$:
\begin{align*}
{\renewcommand{\arraystretch}{1.2}
\begin{array}{rcl}
\mathscr{T}_S^+ & \stackrel{\sim}{\longrightarrow} & \mathbb{R}^{6g-6+3n} \\
{\rm point} & \longmapsto & \left\{ Z_e \right\}_{e \, : \, {\rm edges~of~}\tau},
\end{array}}
\end{align*}
credited as Thurson-Fock's Theorem in \cite{PennerBook}.
The Poisson structure can be written in terms of the Poisson brackets  $\{ Z_{e_1}, Z_{e_2} \} = \varepsilon_{e_1,e_2} = a_{e_1,e_2} - a_{e_2,e_1} \in \{-2,-1,0,1,2\}$, where $a_{e,f}$ is the number of ideal triangles in $\tau$ having $e,f$ as its sides in which $f$ is the clockwise immediate next side to $e$. In terms of the exponential coordinate $X_e = \exp(Z_e)$, it can be written as
\begin{align}
\label{eq:Poisson_bracket2}
\{ X_{e_1}, X_{e_2} \} = \varepsilon_{e_1,e_2} X_{e_1} X_{e_2}.
\end{align}
If we flip $\tau$ along an edge $k$ to get $\tau'$, then the new coordinates are given by the rational transformations of the original:
\begin{align}
\label{eq:X_mutation}
X'_{e'} = \left\{ \begin{array}{ll}
X_e ( 1 + X_k^{-1})^{-\varepsilon_{e,k}} & \mbox{if $\varepsilon_{e,k}>0$,} \\
X_e (1+X_k)^{-\varepsilon_{e,k}} & \mbox{if $\varepsilon_{e,k}<0$}, \\
X_e & \mbox{otherwise,} \\
\end{array} \right.
\end{align}
where $X'_{e'}$ is the new coodinate for the edge $e'$ of $\tau'$ corresponding to an edge $e$ of $\tau$. This Poisson structure is compatible with the $2$-form $\omega$ \eqref{eq:2-form} for the lambda lengths, and is invariant under the transformation \eqref{eq:X_mutation} in the sense that $\{ X'_{e_1'}, X'_{e_2'} \} = \varepsilon_{e_1',e_2'}' X'_{e_1'} X'_{e_2'}$, where $\varepsilon'$ is defined for $\tau'$ analogously to $\varepsilon$. Chekhov and Fock \cite{FC} constructed a non-commutative algebra topologically generated by $\wh{Z}_e$ with relations $[\wh{Z}_{e_1}, \wh{Z}_{e_2}] = i h \{Z_{e_1}, Z_{e_2}\}$, together with an expression $\rho_h(\mu_k)$ in terms of $\wh{Z}_e$'s such that the algebra isomorphism given as conjugation by $\rho_h(\mu_k)$ is the deformation of the mutation formula \eqref{eq:X_mutation} in an appropriate sense, such that $\rho_h$ sends composition of mutations to composition of isomorphisms. This result can be written in terms of the algebra generated by the exponentials $\wh{X}_e^{\pm 1} = e^{\pm \wh{Z}_e}$ with relations $\wh{X}_{e_1} \wh{X}_{e_2} = q^{2 \varepsilon_{e_1,e_2}} \wh{X}_{e_2} \wh{X}_{e_1}$, where $q = e^{i h/2}$. The elements $\wh{Z}_e$ and $\wh{X}_e$ are realized as self-adjoint operators on a Hilbert space, and $\rho_h(\mu_k)$ as a unitary operator involving the quantum dilogarithm function. This $\rho_h$ induces a family projective representations of the mapping class group $\Gamma_S$ on the Hilbert space, since elements of $\Gamma_S$ are realized as compositions of flips, hence as compositions of mutations.

\vs

Meanwhile, Kashaev \cite{Kash98} introduced a different coordinate system of (decorated) Teichm\"uller spaces using lambda lengths, so that the $2$-form $\omega$ \eqref{eq:2-form} becomes `diagonal'. For this coordinate, one needs to consider a decoration on a triangulation of $S$, namely the choice of a distinguished corner for each ideal triangle. We indicate this by a dot $\bullet$ in pictures, and call such a data a {\em dotted triangulation} of $S$. For each ideal triangle $t$, label the sides of it by $e_{t,1}$, $e_{t,2}$, $e_{t,3}$ counterclockwise, such that $e_{t,2}$ is the side opposite to the dotted corner of $t$. Define coordinates $q_t$ and $p_t$ as
\begin{align}
\label{eq:log_Kashaev_coordinates}
q_t = \log \lambda_{e_{t,1}}  - \log \lambda_{e_{t,2}} \quad\mbox{and}\quad
p_t = \log \lambda_{e_{t,3}} - \log \lambda_{e_{t,2}}.
\end{align}
Sometimes we use their exponentials
$$
Y_t = e^{q_t} = \lambda_{e_{t,1}}/\lambda_{e_{t,2}}\quad\mbox{and}\quad
Z_t = e^{p_t} = \lambda_{e_{t,3}}/\lambda_{e_{t,2}},
$$
which Kashaev calls the {\em ratio coordinates} for an apparent reason. The good thing about this coordinate system
\begin{align*}
{\renewcommand{\arraystretch}{1.2}
\begin{array}{rcl}
\mathscr{T}_S^d & \longrightarrow & \mathbb{R}^{8g-8+4n} \\
{\rm point} & \longmapsto & \left\{ (q_t,p_t) \right\}_{t \, : \, {\rm triangles~of~}\tau},
\end{array}}
\end{align*}
is that it behaves like a Darboux basis, that is, the $2$-form \eqref{eq:2-form} becomes
\begin{align}
\nonumber
\omega = - 2 \sum_t  dp_t \wedge dq_t,
\end{align}
and therefore admits a `canonical' quantization; the coordinate functions $q_t,p_t$ are replaced by the operators
\begin{align}
\label{eq:canonical_quantization}
\wh{q}_t = x_t, \quad \wh{p}_t = - i h \, \frac{\partial}{\partial x_t} \quad\mbox{on}\quad  L^2(\mathbb{R}^{4g-4+2n}, \, \bigwedge_t  dx_t),
\end{align}
so that
\begin{align}
\label{eq:canonical_commutation_relations}
[\wh{q}_t, \wh{p}_s] = i h \, \delta_{t,s}, \qquad [\wh{q}_t, \wh{q}_s] = [\wh{p}_t, \wh{p}_s] =0,
\end{align}
where $\delta_{t,s}$ is the Kronecker delta. More important is the rational transformation of $\{ (Y_t,Z_t) \}_t$ under a change of dotted triangulation, and its quantization. In Kashaev's quantization, each change of dotted triangulation is represented as the conjugation by a unitary operator, whose expression involves the quantum dilogarithm function again. As a result, one obtains another family of projective representations of $\Gamma_S$ on some Hilbert space, since elements of $\Gamma_S$ are realized as transformations of dotted triangulations. The author showed in \cite{Ki12} that the resulting projective representation is not equivalent to the one from Chekhov-Fock quantization, in the `universal' case. 

\vs

One thing to ponder is what the space parametrized by Kashaev's coordinates is exactly. First, since Kashaev's coordinates are only ratios of lambda lengths, they may not parametrize the full decorated Teichm\"uller space. More serious problem is that as the Kashaev coordinates are defined as difference or ratios, in many cases they are not independent of each other, and satisfy some linear or algebraic equations. In the above mentioned Kashaev quantization these equations are forgotten, which means that what is quantized is the total space of a certain fiber bundle over the space parametrized by the Kashaev coordinates; more precisely, as pointed out to the author by a referee, it is non-canonically identified with the space $P\til{\mathscr{T}}_S^d \times H^1(S,\mathbb{R})$, where $P\til{\mathscr{T}}_S^d = \mathscr{T}_S^d/\mathbb{R}_{>0}$ is Penner's projectivized decorated Teichm\"uller space. In any case, the end result provides an interesting and well-defined family of projective representations of $\Gamma_S$, which can be said to deform the classical coordinate changes. Another aspect to discuss is the use of dotted triangulations of $S$. One may first think that introducing the dots to triangles seems ad hoc. However, in certain senses, the dotted triangulations arise naturally in some important mathematical structures. For example, presciption of the decomposition orders of tensor products in a tensor category, that is, how to put parentheses in the expression of a tensor product like $(V_1\otimes (V_2\otimes V_3))\otimes V_4$, can be graphically encoded using the dotted triangulations, and the author, together with Igor Frenkel, proved that there indeed exists a tensor category of representations of some Hopf algebra, such that the operators associated to changes of decomposition orders, that is, to changes of dotted triangulations, are equivalent to the operators coming from the Kashaev quantization of Teichm\"uller spaces \cite{FrKi}. Also, Teschner \cite{T} used dotted triangulations as the main combinatorial ingredient in his attempt for a proof of Verlinde's conjecture that the quantum Teichm\"uller space is equivalent to the space of conformal blocks of Liouville conformal field theory. In addition, computations for Kashaev's quantization are usually easier than the ones for Chekhov-Fock's quantization.

\subsection{Quantization of higher Teichm\"uller spaces}

The lambda length coordinates and shear coordinates depend on the choice of a combinatorial data given by an ideal triangulation $\tau$ of $S$. To each $\tau$ we get a collection of coordinate functions associated to edges of $\tau$, whose Poisson structures \eqref{eq:2-form}, \eqref{eq:Poisson_bracket2} are described in terms of $\tau$. For each chosen edge $k$, we can flip $\tau$ along $k$ to get another triangulation $\tau'$, and the coordinate functions change according to certain rules \eqref{eq:Ptolemy_relation}, \eqref{eq:X_mutation}. A remarkable fact is that such a structure is a naturally arising example of a {\em cluster algebra} of Fomin and Zelevinsky \cite{FZ}, or more precisely, of a {\em cluster ensemble} of Fock and Goncharov \cite{FG03} which generalizes cluster algebras. Fock and Goncharov \cite{FG03} \cite{FG09} gave a quantization of such structures, which can then be applied to examples other than Teichm\"uller spaces. Prominent examples of cluster ensembles come from Fock-Goncharov's versions of what are called the {\em higher Teichm\"uller spaces} \cite{FG03} \cite{FG06}. Recall that the Teichm\"uller space is a certain component of the space of all representations $\pi_1(S) \to {\rm PSL}_2(\mathbb{R})$ modulo conjugation in ${\rm PSL}_2(\mathbb{R})$. For a split reductive real Lie group $G$, Fock and Goncharov \cite{FG06} defined {\em positive} representations $\pi_1(S) \to G$, and we study the moduli space $\mathscr{L}_{G,S}^+$ of positive representations $\pi_1(S) \to G$ modulo conjugation by $G$. For $G={\rm PGL}_2(\mathbb{R})$ we recover the classical Teichm\"uller space $\mathscr{T}_S$, and in general we call the moduli space $\mathscr{L}_{G,S}^+$ the {\em higher Teichm\"uller space}. As in the case of Teichm\"uller spaces, Fock-Goncharov defines certain fiber bundles over it concerning the data on the boundary of $S$, one being $\mathscr{A}_{G,S}^+$ as an analog of Penner's decorated Teichm\"uller space $\mathscr{T}_S^d$ parametrized by lambda lengths, another being $\mathscr{X}_{G,S}^+$ as an analog of the holed Teichm\"uller space $\mathscr{T}_S^+$ parametrized by shear coordinates; see \S\ref{subsec:higher_Teichmuller_spaces} of the present paper for their precise definitions. These two moduli spaces are also loosely called the {\em higher Teichm\"uller spaces} of $S$.

\vs

In the present paper, we only deal with the cases when $G = {\rm PGL}_m(\mathbb{R})$ or ${\rm SL}_m(\mathbb{R})$. For a chosen ideal triangulation $\tau$ of $S$, one divides each edge of $\tau$ into $m$ pieces, and by drawing grid lines `parallel' to the original edges one obtains an {\em $m$-triangulation associated to $\tau$}, which is a refinement of $\tau$. The newly introduced small edges that are not part of edges of $\tau$ are given orientations from those of edges of $\tau$, which in turn are given from the restriction of the orientation of the surface to ideal triangles. Thus one gets a collection of vertices($=$ intersections of grid lines) and arrows running among them, that is, a quiver. This quiver provides the combinatorial data needed in the definitions of cluster algebras and cluster ensembles. For each vertex one associates a coordinate function of $A$-type, described in \S\ref{subsec:A_X_coordinates}, which is a generalization of Penner's lambda length coordinate for $m=2$. In cluster theory one assigns a mutation at each vertex, which is a rule for transforming the quiver into another one, while transforming the collection of vertex coordinate functions into another in a certain way depending on the quiver. The $A$-type coordinates, which parametrize the space $\mathscr{A}_{{\rm SL}_m,S}^+$, change like Penner's lambda lengths under each mutation. Meanwhile, there is a degenerate $2$-form on $\mathscr{A}_{{\rm SL}_m,S}^+$ given analogously to \eqref{eq:2-form}, invariant under mutations. The triangles $t$ of $\tau$ in the sum \eqref{eq:2-form} should be replaced by the small `upside-down' triangles of the $m$-triangulation associated to $\tau$; here, the word `upside-down' should be understood with respect to the ideal triangle of $\tau$ in which the relevant small triangle is contained in. Suppose that $\tau'$ is obtained from $\tau$ by flipping along an edge of $\tau$. Then the quivers for the $m$-triangulations associated to $\tau$ and $\tau'$ are related by a certain sequence of $m(m-1)/2$ mutations. On the other hand, Fock-Goncharov \cite{FG06} defines $X$-type coordinates for vertices of the quiver, as certain cross ratios or triple ratios of $A$-type coordinates, depending on the quiver data. These $X$-type coordinates parametrize the space $\mathscr{X}_{{\rm PGL}_m,S}^+$, and transform like shear coordinates under mutations. There is a Poisson bracket on $\mathscr{X}_{{\rm PGL}_m,S}^+$ given analogously to \eqref{eq:Poisson_bracket2}, where $\varepsilon_{ij}$ is now the signed number of arrows from the vertex $i$ to $j$ in the quiver. This Poisson structure is invariant under mutations, and is compatible with the $2$-form on $\mathscr{A}_{{\rm SL}_m,S}^+$ with respect to the above-mentioned map $\mathscr{A}_{{\rm SL}_m,S}^+ \to \mathscr{X}_{{\rm PGL}_m,S}^+$ given on the level of functions by cross ratios and triple ratios. This pair $(\mathscr{X}^+_{{\rm PGL}_m,S}, \mathscr{A}_{{\rm SL}_m,S}^+)$ is an example of Fock-Goncharov's cluster ensemble.

\vs

Using Fock-Goncharov's quantization construction \cite{FG09}, one obtains a quantization of the space $\mathscr{X}_{{\rm PGL}_m,S}^+$; one replaces the $X$-type coordinate functions by positive-definite self-adjoint operators on a Hilbert space, and represent the mutations as conjugation by some unitary operators. Like in quantum Teichm\"uller theory, since elements of $\Gamma_S$ are realized as flips of ideal triangulations, and since flips of ideal triangulations are in turn realized as sequences of mutations of quivers for $m$-triangulations, one thus obtains a projective representation of $\Gamma_S$ from quantum higher Teichm\"uller theory. 

\vs

In the present paper, we consider an analog of Kashaev's ratio coordinate system associated to dotted triangulations and a corresponding quantization, in the case of higher Teichm\"uller theory. From the dots of the dotted triangulation of $\tau$, one can introduce dots to the small upside-down triangles of the $m$-triangulation associated to $\tau$. We formulate the resulting combinatorial data as a {\em quiver with dotted triangles} (see \S\ref{subsec:ratio_coordinates}). For each small dotted triangle of this quiver with dotted triangles, we define two coordinates as logarithm of ratios of the $A$-coordinates for the three vertices, just as Kashaev did in the case $m=2$. Then the $2$-form on $\mathscr{A}_{{\rm SL}_m,S}^+$ becomes `diagonal' again, so that the ratio coordinates admit canonical quantization. Each `mutation' of quiver with dotted triangles induces rational transformation on these ratio coordinates similarly as in Kashaev's case $m=2$. In order to describe the full sequence of mutations of quiver with dotted triangles that realizes a tranformation of underlying dotted triangulation of $S$, we introduce one more kind of `trivial change' of quiver with dotted triangles. The `mutation' is quantized using Kashaev's formula for $m=2$, and the `trivial change' is quantized by a certain unitary operator which is an analog of Fourier transformation.

\vs

What is done is as follows. First, for each small dotted triangle $t$, define analogs of the Kashaev coordinates by the formula \eqref{eq:log_Kashaev_coordinates}, where we replace $\lambda_{e_{t,j}}$ ($j=1,2,3$) by $\Delta_{P_{t,j}}$ ($j=1,2,3$) where $P_{t,j}$ are the vertices of $t$ and $\Delta_{P_{t,j}}$ is the corresponding $A$-type coordinate for $\mathscr{A}_{G,S}^+$ of Fock-Goncharov; for now, let us denote them by $q_t$ and $p_t$ again. Then, just as before, there are some linear relations satisfied by $q_t$'s and $p_t$'s. Consider the algebra $\mathscr{K}^h$ topologically generated by the symbols $\wh{q}_t$ and $\wh{p}_t$, modded out by the relations \eqref{eq:canonical_commutation_relations} and the quantized linear relations; we just replace $q_t$'s and $p_t$'s by $\wh{q}_t$'s and $\wh{p}_t$'s in the linear relations. Then the quantum version of the `mutation' and `trivial change' of quivers with dotted triangles are given as algebra automorphisms of $\mathscr{K}^h$, realized as conjugation by certain expressions in terms of $\wh{q}_t$'s and $\wh{p}_t$'s. In the end, to each transformation of dotted triangulations of $S$ we associate an automorphism of $\mathscr{K}^h$ given as the conjugation by some expression. It is easy to check that these automorphisms recover the classical coordinate changes of the new coordinates $q_t$'s and $p_t$'s as the quantum parameter $h$ goes to $0$; the main point to check for consistency of this quantization is that this association sends composition of transformations of dotted triangulations of $S$ to composition of automorphisms of $\mathscr{K}^h$. For this, we first show in \S\ref{subsec:proof_of_completeness} that the known `Kashaev relations' (see Lem.\ref{lem:relations_of_elementary_moves_of_Kashaev_groupoid} and \eqref{eq:new_elementary_moves_relations}) among elementary transformations of dotted triangulations generate the whole set of relations, which has been assumed in the literature without a complete proof, and then we show that the above automorphisms do satisfy these Kashaev relations. In the case of Kashaev quantization for $m=2$, this was proved without using the linear relations of $\wh{q}_t$'s and $\wh{p}_t$'s, and therefore we just drop these equations and represent the corresponding algebra by \eqref{eq:canonical_quantization} on a Hilbert space, yielding a projective representation of $\Gamma_S$. In the present paper, we prove this consistency for $m=3$, {\em using} the linear relations of $\wh{q}_t$'s and $\wh{p}_t$'s. In particular, this time we cannot represent $\wh{q}_t$'s and $\wh{p}_t$'s as we did in \eqref{eq:canonical_quantization}, because those operators do not satisfy the desired linear relations. So our quantization is done only in an algebraic sense at the moment. However, one may be able to find a suitable natural representation of the algebra $\mathscr{K}^h$ on some Hilbert space where $\wh{q}_t$'s and $\wh{p}_t$'s are represented as self-adjoint operators, which can be a topic of future research. This then would yield a new family of projective representations of mapping class groups. In the meantime, one may search for a proof for $m>3$ too.

\vs

In \cite{FrKi}, Kashaev's quantization of Teichm\"uller spaces is reconstructed in the tensor category of certain class of representations of the Hopf algebra related to the Borel subalgebra of $\mcal{U}_q(\frak{sl}(2,\mathbb{R}))$. Ivan Ip \cite{Ip14} studied the representation theory of a similar algebra with $\mcal{U}_q(\frak{sl}(2,\mathbb{R}))$ replaced by $\mcal{U}_q(\frak{sl}(m,\mathbb{R}))$, and showed that it has a similar tensor category structure. Thus, as an analog of \cite{FrKi}, from these categories we obtain certain representations of the groupoid of transformations of dotted triangulations of certain genus $0$ surfaces. We expect that it is related to the version of quantum higher Teichm\"uller theory constructed in the present paper. This, which is in fact the motivation of the present paper, is work in progress with Ip; it then may also lead to a higher generalization of Teschner's work on `modular functors' \cite{T}. In addition, one can ask what the moduli space coordinatized by the new ratio coordinates constructed in the present paper is, i.e., what kind of structure on $S$ it parametrizes. This new space $\mathscr{K}$ has an interesting geometry in itself, because it is a positive space, as its cousins $\mathscr{A}$ and $\mathscr{X}$ are.

\vs

\noindent{\bf Acknowledgments.} I thank Ivan Ip for motivation and helpful discussions. I thank Dylan Allegretti for his help on understanding the works of Fock-Goncharov. Finally, I'd like to thank the referee for the reviewing and for helpful comments.

\section{Ratio coordinate for higher Teichm\"uller space}
\label{sec:ratio_coordinate}

\subsection{Higher Teichm\"uller spaces $\mathscr{X}^+_{{\rm PGL}_m, \wh{S}}$ and $\mathscr{A}^+_{{\rm SL}_m,\wh{S}}$}
\label{subsec:higher_Teichmuller_spaces}

In the present subsection, we would like to give definitions of the objects that are being coordinatized, which we collect mainly from \cite{FG06}, and also from \cite{FG03}. Let $G$ be a split reductive algebraic group over $\mathbb{Q}$. In the present paper, $G$ is either ${\rm SL}_m$ or ${\rm PGL}_m$, where $m\ge 2$. One can feed in any base field $F$ to get ${\rm SL}_m(F)$ or ${\rm PGL}_m(F)$; we may let $F=\mathbb{R}$ for our purposes. Let $S$ be a compact oriented surface with $n\ge 0$ holes, i.e. $S = \ol{S} - D_1 \cup \ldots \cup D_n$, where $\ol{S}$ is a compact oriented real $2$-dimensional manifold of genus $g$ without boundary, while $D_1,\ldots,D_n$ are non-intersecting open discs in $\ol{S}$.
\begin{definition}[\cite{FG06}]
A {\em marked surface $\wh{S}$} is a pair $(S,\{x_1,\ldots,x_k\})$, where $S$ is a compact oriented surface with holes, and $\{x_1,\ldots,x_k\}$ ($k\ge 0$) is a finite set of distinct points on the boundary $\partial S$ which are  considered up to isotopy, called {\em boundary points}. The {\em punctured boundary $\partial \wh{S}$} of $\wh{S}$ is defined as $\partial \wh{S} = \partial S - \{x_1,\ldots,x_k\}$. Each connected component of $\partial \wh{S}$ that is not a full circle without a boundary point is called a {\em boundary arc}.
\end{definition}
\begin{definition}[\cite{FG06}]
For a marked surface $\wh{S}$, let $N$ be the number of all connected components of $\partial\wh{S}$. We say $\wh{S}$ is {\em hyperbolic} if 1) $g>1$, 2) $g=1$, $N>0$, or 3) $g=0$, $N\ge 3$.
\end{definition}

Let $\mathscr{B}$ denote the flag variety of $G$, parametrizing all Borel subgroups of $G$. If we choose a Borel subgroup $B$ of $G$, then $\mathscr{B} = G/B$. Let $U := [B,B]$, which is a maximal unipotent subgroup in $G$, and let $\mathscr{A}$ denote the principal affine variety of $G$. These become more concrete for $G = {\rm SL}_m$. We can choose $B$ to be the group of all upper triangular matrices in ${\rm SL}_m$, and take $U$ to be the group of all upper triangular matrices in ${\rm SL}_m$ with all diagonal entries being $1$. Let $\mathscr{B}$ be the space of all full flags in $F^m$, where $F$ is the base field and a {\em full flag} in $F^m$ is a filtration of vector spaces $V_1 \subseteq V_2 \subseteq \cdots \subseteq V_m = F^m$ such that $\dim_F V_j = j$ for each $j$. Thinking of the bases, one can encode this as an ordered $m$-tuple of vectors $(v_1,v_2,\ldots,v_m)$, where $v_1,\ldots,v_j$ span $V_j$; so, two tuples $(v_1,\ldots,v_m)$ and $(v_1',\ldots,v_m')$ represent a same flag if ${\rm Span}\{v_1,\ldots,v_j\} = {\rm Span}\{v_1',\ldots,v_j'\}$ for each $j$. Then $G = {\rm SL}_m$ acts transitively on $\mathscr{B}$ on the left by $g.(v_1,\ldots,v_m) = (gv_1,\ldots,gv_m)$, where the stabilizer of the standard flag $(e_1,\ldots,e_m)$ is the subgroup $B$ of upper triangular matrices; thus $\mathscr{B} = G/B$. On the other hand, let $\mathscr{A}$ be the space of all full affine flags in $F^m$, where a {\em full affine flag}\footnote{this may not be a standard terminology} is an $m$-tuple of vectors $(v_1,\ldots,v_m)$ where the equivalence relation of $(v_1,\ldots,v_m)$ and $(v_1',\ldots,v_m')$ is now given by the condition $v_1\wedge v_2 \wedge \cdots \wedge v_j = v_1' \wedge \cdots \wedge v_j'$ for each $j$. Then $G={\rm SL}_m$ acts transitively on $\mathscr{A}$ as before, where the stabilizer of the standard affine flag $(e_1,\ldots, e_m)$ is now the subgroup $U$ of unipotent upper triangular matrices; thus $\mathscr{A} = G/U$.

\vs

Now, let $\mathscr{L}$ be a $G$-local system on $S$, that is, a principal $G$-bundle on $S$ equipped with a flat connection. The fiber of this bundle at each point of $S$ is a principal homogenous space of $G$, that is, a copy of $G$ without multiplication structure, and the flat connection tells us how the fibers at nearby points are `connected'. If we assume that $G$ acts on $\mathscr{L}$ from right, the associated {\em flag bundle $\mathscr{L}_\mathscr{B}$} and the {\em principal affine bundle $\mathscr{L}_\mathscr{A}$} are defined by
$$
\mathscr{L}_\mathscr{B} := \mathscr{L} \times_G \mathscr{B} = \mathscr{L}/B; \qquad\qquad \mathscr{L}_\mathscr{A} := \mathscr{L}/U.
$$
Observe that we can use the same description of $G/B$ as space of full flags in $F^m$, also in the case $G = {\rm PGL}_m$.

\begin{definition}[Def.1.2.~of \cite{FG06}]
Let $G = {\rm PGL}_m$. A {\em framed $G$-local system on a marked surface $\wh{S}$} is a pair $(\mathscr{L},\beta)$ where $\mathscr{L}$ is a $G$-local system on $S$ and $\beta$ is a flat section of the restriction of the flag bundle $\mathscr{L}_\mathscr{B}$ to the punctured boundary $\partial \wh{S}$.

Let $\mathscr{X}_{G,\wh{S}}$ be the moduli space of framed $G$-local systems on $\wh{S}$.
\end{definition}
The above definition also works for any split reductive algebraic group $G$ \cite{FG06}. One way of viewing the data $(\mathscr{L},\beta)$ is as follows. From the monodromy of the flat connection of $\mathscr{L}$ we get a homomorphism $\pi_1(S) \to G$, which determines how the parallel transport is done. So, once we choose a flag at a point on $S$, we can drag it along a path in $S$ to a flag at a different point, in a `flat' manner; this dragging depends only on the homotopy class of the path. At each connected component of the punctured boundary $\partial \wh{S}$ we choose a point, and choose a flag at that point; these completely determine the data $\beta$, with the help of parallel transport. See \cite[Chap.2]{FG06} for more details. The following definition can be interpreted in a similar way.

\begin{definition}[Def.1.3.~of \cite{FG06}]
\label{def:A-space}
Suppose $m\ge 3$ is odd, and let $G={\rm SL}_m$. A {\em decorated $G$-local system on a marked surface $\wh{S}$} is a pair $(\mathscr{L}, \alpha)$, where $\mathscr{L}$ is a $G$-local system on $S$, and $\alpha$ a flat section of the restriction of $\mathscr{L}_\mathscr{A}$ to the punctured boundary $\partial \wh{S}$.

Let $\mathscr{A}_{G,\wh{S}}$ be the moduli space of decorated $G$-local systems on $\wh{S}$.
\end{definition}
A definition similar to Def.\ref{def:A-space} works for the cases when $G$ is of type $E_6,E_8,F_4,G_2$, where $G$ is simply-connected and the center of $G$ has odd order \cite{FG06}. For other cases, definition of $\mathscr{A}_{G,\wh{S}}$ is more subtle. Here we state the definition in \cite{FG06} for the cases $G = {\rm SL}_m$ for even $m$. Let $w_0$ a natural lift in $G$ of the longest Weyl group element, and let $s_G = w_0^2$. Then $s_G$ is in the center of $G$, and square of $s_G$ is identity. In case of $G= {\rm SL}_m$ with even $m$, this element $s_G$ is the diagonal matrix with all the diagonal entries being $-1$.  Let $T'S$ be the tangent bundle to $S$, with the zero section removed. Its fundamental group $\pi_1(T'S, x)$ is a central extension of $\pi_1(S,y)$ by $\mathbb{Z}$, where $x\in T'_yS$. Denote by $\sigma_S$ a generator of this central subgroup $\mathbb{Z}$; then $\sigma_S$ is well defined up to a sign. For each boundary component $C_i$ of $S$, let ${\bf C}_i$ be a little annulus in $S$ containing $C_i$ as its boundary, and let $x_1,\ldots,x_p$ be the boundary points on $C_i$. Identify ${\bf C}_i$ as $C_i \times [0,1]$, and let ${\bf C}_i'$ be the part of ${\bf C}_i$ corresponding to $(C_i \setminus\{x_1,\ldots,x_p\}) \times [0,1]$.
\begin{definition}[Def.2.3, 2.4.~of \cite{FG06}]
Suppose $m\ge 2$ is even, and let $G = {\rm SL}_m$. A {\em twisted $G$-local system on $S$} is a local system on $T'S$ with the monodromy $s_G$ around $\sigma_S$.
Let $\mathscr{L}$ be a $G$-local system on $T'S$ representing a twisted local system on $S$. A {\em decoration} on $\mathscr{L}$ is the choice of a locally constant section $\alpha$ of the restriction of the principal affine bundle $\mathscr{L}_\mathscr{A}$ to $\cup_i {\bf C}_i'$. 
A {\em decorated} twisted $G$-local system on a marked surface $\wh{S}$ is a pair $(\mathscr{L}, \alpha)$, where $\mathscr{L}$ is a twisted $G$-local system on $S$, and $\alpha$ is a decoration on $\mathscr{L}$. 

Let $\mathscr{A}_{G,\wh{S}}$ be the moduli space of decorated twisted $G$-local systems on $\wh{S}$.
\end{definition}

One of the main results of \cite{FG06} is that these moduli spaces are `positive':
\begin{theorem}[Thm.1.4. of \cite{FG06}]\label{thm:positive_moduli_spaces} Let $G$ be a split semi-simple simply-connected algebraic group, and $G' = G/{\rm Center}(G)$. Let $\wh{S}$ be a marked hyperbolic surface with $n>0$ holes. Then the moduli spaces $\mathscr{X}_{G',\wh{S}}$ and $\mathscr{A}_{G,\wh{S}}$ have positive atlases.
\end{theorem}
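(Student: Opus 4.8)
The plan is to build, for each ideal triangulation $T$ of $\wh{S}$, an explicit rational coordinate chart — a birational isomorphism from a dense open subset of the moduli space onto a split algebraic torus — and then to show that passing between the charts of two triangulations is given by a subtraction-free rational map. Since $\wh{S}$ is hyperbolic with $n>0$ holes, ideal triangulations exist, and, by the classical connectivity of the flip graph of a marked surface, any two of them are joined by a finite chain of flips; so it suffices to treat a single flip. I would carry out the general (arbitrary $G$) argument, noting where the ${\rm SL}_m$/${\rm PGL}_m$ case simplifies.

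First, the chart for a fixed $T$. Both moduli spaces are assembled from local data over the triangles and edges of $T$: a framed (resp.\ decorated) $G$-local system restricted to a ribbon neighbourhood of the $1$-skeleton of $T$ is recorded, via parallel transport, by a configuration of flags (resp.\ affine flags), one per corner of each triangle, together with the monodromies around the holes. For $\mathscr{X}_{G',\wh{S}}$ the building blocks are (i) the configuration space of three flags in general position, which carries a canonical positive coordinate system — the triple ratios when $G'={\rm PGL}_m$, and in general one coordinate per positive root coming from Lusztig's parametrization of the totally positive part of the maximal unipotent — and (ii) the configuration of two flags together with the $H$-torsor of compatible framings along an interior edge, contributing one coordinate per simple root per edge. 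For $G'={\rm PGL}_m$ these assemble into precisely the coordinates attached to the vertices of the $m$-triangulation of $T$ described in \S\ref{subsec:A_X_coordinates}. For $\mathscr{A}_{G,\wh{S}}$ one runs the same argument with flags replaced by affine flags: the local block is the configuration space of three affine flags, whose coordinate ring carries a positive structure given by products of generalized minors of the three affine flags, and the edge twist contributes the lambda-length-type coordinates; when ${\rm Center}(G)$ has even order the twisted-local-system definition is what makes the gluing well defined, the sign ambiguity being absorbed by the central element $s_G = w_0^2$. A van Kampen argument — $\pi_1(S)$ is generated by the triangles and edges of $T$ with relations coming from the vertices — then shows that the collected data reconstructs the local system up to isomorphism and that the coordinates are algebraically independent, so the chart is birational onto a torus.

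Second, the heart of the proof is to show that a flip of $T$ along an edge induces a positive transition map. By locality this reduces to a statement about a single ideal quadrilateral: its two diagonals give two coordinate systems built from the four bounding (affine) flags and the central twist, and one must see that one is obtained from the other by a subtraction-free rational transformation. For $G={\rm SL}_2$ and ${\rm PGL}_2$ this is exactly the Ptolemy relation \eqref{eq:Ptolemy_relation} together with the $\mathscr{X}$-mutation \eqref{eq:X_mutation}; for general $m$ one checks that a flip of the $m$-triangulation is realized by the explicit sequence of $m(m-1)/2$ cluster mutations, each visibly positive; and for arbitrary $G$ one expresses the change-of-coordinate map as a composition of the elementary positive maps relating Lusztig's parametrizations associated to reduced words of Weyl group elements, hence subtraction-free. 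Running the same argument for the inverse flip shows the transition maps are positive in both directions, so the charts over all triangulations form a positive atlas.

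The main obstacle is this last step for a general group $G$: controlling the flip transition map amounts to understanding the change-of-parametrization maps on the configuration space of four generic (affine) flags, which is precisely where one needs the full strength of the theory of total positivity in split reductive groups — the fact that the braid-type moves relating different Lusztig coordinates are subtraction-free. For the cases actually used in this paper, $G={\rm SL}_m$ and $G'={\rm PGL}_m$, this can instead be verified by hand on the $m$-triangulation: the $A$-coordinate flip is a product of classical three-term Pl\"ucker relations and the $\mathscr{X}$-coordinate change is the explicit mutation sequence, both manifestly positive — the route followed in \S\ref{subsec:A_X_coordinates}.
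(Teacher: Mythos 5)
The paper does not prove this theorem: it is quoted verbatim from Fock--Goncharov \cite{FG06} (their Theorem 1.4) and is used as imported background, with the coordinates it asserts described but not re-derived in \S\ref{subsec:A_X_coordinates}. Your sketch is a faithful outline of the argument in the cited source — local charts from configurations of (affine) flags attached to an ideal triangulation, Lusztig-type positivity of the triple configuration spaces, and positivity of flip transitions via the $m$-triangulation mutation sequence for ${\rm SL}_m$/${\rm PGL}_m$ and total-positivity braid moves in general — so it matches the only proof the paper points to, rather than constituting an alternative route.
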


A space having a positive atlas roughly means that it has a collection of coordinate systems, such that the coordinate change map between any two coordinate systems is a positive rational map. For our purposes, we can think of a coordinate system as an identification of an open dense subset of the space with $(F^*)^\ell$ for some positive integer $\ell$, where $F$ is the base field. A positive rational map is a map in the form $f_1/f_2$ where $f_1,f_2$ are $\mathbb{Z}_{>0}$-linear combinations of monomials in the corresponding coordinates; the open subsets are glued together along these (bi)rational maps. See \cite{FG03} for a precise definition using `coordinate groupoids'. Then it makes sense to think of all points of the space whose coordinates are all positive real, for one coordinate system, hence for all coordinate systems.

\begin{definition}[Def.1.8.~of \cite{FG06}: higher Teichm\"uller spaces]
Let $G,\wh{S}$ be as in Thm.\ref{thm:positive_moduli_spaces}. The {\em higher Teichm\"uller spaces} $\mathscr{X}^+_{G',\wh{S}}$ and $\mathscr{A}_{G,\wh{S}}^+$ are defined as the $\mathbb{R}_{>0}$-points $\mathscr{X}_{G',\wh{S}}(\mathbb{R}_{>0})$ and $\mathscr{A}_{G,\wh{S}}(\mathbb{R}_{>0})$ of the moduli spaces $\mathscr{X}_{G,\wh{S}}$ and $\mathscr{A}_{G,\wh{S}}$, respectively.
\end{definition}

Moreover, we also have a map from the $\mathscr{A}$-space to the $\mathscr{X}$-space, induced by the canonical map $G \to G'$, as in \cite{FG06}. Since $s_G$ is in the center of $G$, a twisted $G$-local system $\mathscr{L}$ on $T'S$ provides a $G'$-local system $\mathscr{L}'$ on $S$. The canonical projection $p : \mathscr{L}_\mathscr{A} \to \mathscr{L}'_\mathscr{B}$ yields the map
\begin{align}
\label{eq:p_definition}
p : \mathscr{A}_{G,\wh{S}} \to \mathscr{X}_{G',\wh{S}}, \qquad (\mathscr{L},\alpha) \mapsto (\mathscr{L}', \beta), \quad \beta:= p(\alpha).
\end{align}
Thus we now have definitions of the moduli spaces $\mathscr{X}_{{\rm PGL}_m, \wh{S}}^+$ and $\mathscr{A}_{{\rm SL}_m, \wh{S}}^+$, together with the map $p : \mathscr{A}_{{\rm SL}_m,\wh{S}} \to \mathscr{X}_{{\rm PGL}_m,\wh{S}}$ \eqref{eq:p_definition}.

\subsection{$A$-type and $X$-type coordinates}
\label{subsec:A_X_coordinates}

We now describe the coordinate systems of Fock-Goncharov \cite{FG06} for the spaces $\mathscr{X}_{{\rm PGL}_m,\wh{S}}$ and $\mathscr{A}_{{\rm SL}_m,\wh{S}}$, for a marked hyperbolic surface $\wh{S} = (S,\{x_1,\ldots,x_k\})$ with $n>0$ holes, where $m\ge 2$. These coordinate systems require the choice of a combinatorial data on the surface, called an {\em $m$-triangulation}. First, shrink each boundary component of $\wh{S}$ without boundary points to a puncture. 
\begin{definition}[\cite{FG06}: ideal triangulation]
\label{def:ideal_triangulation}
An {\em ideal triangulation} of $\wh{S}$ is a triangulation of $S$ up to isotopy, with the vertices at either the punctures or the boundary arcs, such that each boundary arc carries exactly one vertex of the triangulation, and each puncture serves as a vertex. The edges of an ideal triangulation either lie inside of $S$, or inside the boundary of $S$.
\end{definition}
We also call the edges of an ideal triangle of an ideal triangulation the {\em sides} of that triangle.
\begin{definition}[\cite{FG06}: $m$-triangulation]
\label{def:m-triangulation}
Take the triangle
$$
T_m := \{(x,y,z) \in \mathbb{R}^3 : x+y+z=m, \,x,y,z \ge 0\},
$$
and consider its triangulation given by the lines $\{x=p\}$, $\{y=p\}$, and $\{z=p\}$ in $T_m$, where $p \in \mathbb{Z}$, $0\le p\le m$. An {\em $m$-triangulation} of a triangle is a triangulation isotopic to this one. An edge of an $m$-triangulation is called an {\em internal edge} if it does not lie on a side of the original triangle. The {\em $m$-triangulation} of an ideal triangulation $\tau$ of $\wh{S}$ is the triangulation obtained by $m$-triangulating all ideal triangles of $\tau$.
\end{definition}
In particular, each edge of $\tau$ contains $m+1$ vertices of the $m$-triangulation of $\tau$. The coordinate for each integral point on the triangle $T_m$ tells us how `far' the point is from each edge of $T_m$; for example, the $x$-coordinate tells the distance to the line $\{x=0, \, y+z=m\}$, i.e., how many internal edges we need to reach the line from the point. Two of the coordinates being zero means that the point is one of the vertices of $T_m$, and one of the coordinates being zero means that the point is on one of the sides of $T_m$. For each ideal triangle of $\tau$, the orientation of the surface $S$ induces a clockwise orientation on the edges of this ideal triangle, and hence an orientation on the internal edges of the $m$-triangulation of $\tau$, by setting the orientation of each internal edge to be the same as that of an edge of $\tau$ parallel to it. So we get a certain quiver on the surface $S$; we recall that a quiver is a graph with oriented edges, and we also call the oriented edges of a quiver {\em arrows}. See Fig.\ref{fig:til_T_ts_realization} for an example, induced by the clockwise orientation of the plane. One can show that the quiver we just obtained has no cycle of length $1$ or $2$; this follows from the corresponding statement for $m=2$.

\vs

We shall assign Fock-Goncharov coordinates to elements of the following set:
\begin{definition}[\cite{FG06}, modified]
For an ideal triangulation $\tau$ of $\wh{S}$, define the two sets:
\begin{align*}
I_m^\tau & = \{\mbox{vertices of the $m$-triangulation of $\tau$}\} - \{\mbox{vertices of $\tau$}\}, \\
J_m^\tau & = I_m^\tau - \{\mbox{the vertices at the boundary arcs of $S$}\}.
\end{align*}
\end{definition}
Now let us describe how the $A$-type coordinates are defined in \cite{FG06}. Consider an element $v$ of $I_m^\tau$. Suppose that it is contained in an ideal triangle $t$ of $\tau$. Any element of $I_m^\tau$ contained in $t$ can be described as $(a_1,a_2,a_3)$ for some nonnegative integers $a_1,a_2,a_3$ with $a_1+a_2+a_3=m$, as in Def.\ref{def:m-triangulation}. These numbers tell us how far $v$ is from each edge of $t$. Suppose $a_1,a_2,a_3>0$ for now. Let $u_1,u_2,u_3$ be the vertices of $t$, so that the distance from $v$ to the edge of $t$ opposite to $u_j$ is $a_j$, which is the minimal number of internal edges needed to connect $v$ to that edge; assume that we arranged the situation so that $u_1\to u_2\to u_3\to u_1$ is clockwise direction, with respect to the orientation of $t$ inherited from that of $S$. The data $\alpha$ of a given point $(\mathscr{L},\alpha)$  of $\mathscr{A}_{{\rm SL}_m,\wh{S}}$ assign a full affine flag to each of the vertices of $\tau$ that are either at punctures of $S$ or at the boundary arcs of $S$; see the similar discussion just before Def.\ref{def:A-space} about the data $\beta$. Using the flat connection of $\mathscr{L}$, we drag the three full affine flags at $u_1,u_2,u_3$ to $v$ along paths in $t$, by parallel transport; denote these full affine flags at $v$ by $(x_1,\ldots,x_m)$, $(y_1,\ldots,y_m)$, and $(z_1,\ldots,z_m)$, respectively. Then we form an element $x_1\wedge \cdots \wedge x_{a_1} \wedge y_1 \wedge \cdots \wedge y_{a_2} \wedge z_1 \wedge \cdots \wedge z_{a_3}$ of the top exterior power of $F^m$. By pairing with a monodromy invariant volume form on $F^m$, we get a real number, and this is the $A$-coordinate $\Delta_v(\mathscr{L}, \alpha)$ associated to the vertex $v$. For the volume form, it seems that we can choose any single volume form on $F^m$ and use it all the time. Now suppose that one of $a_1,a_2,a_3$ is zero; then, since $v\in I_m^\tau$, we see that exactly one of them is zero. Without loss of generality, let $a_3=0$. Then we use the element $x_1\wedge\cdots\wedge x_{a_1} \wedge y_1 \wedge \cdots\wedge y_{a_2}$ in the above description, and by pairing with the chosen volume form we get $\Delta_v(\mathscr{L}, \alpha)$ associated to the vertex $v$. See \cite{FG06} for more details, for example to see why these are well-defined; for a precise description, they use the notion of `face paths'. Fock-Goncharov \cite{FG06} describes the $X$-coodinates of $\mathscr{X}_{{\rm PGL}_m,\wh{S}}$, as cross-ratios and triple-ratios of $A$-coordinates. The result can be described in terms of the map $p: \mathscr{A}_{{\rm SL}_m,\wh{S}} \to \mathscr{X}_{{\rm PGL}_m,\wh{S}}$ \eqref{eq:p_definition}, given by
\begin{align}
\label{eq:our_p}
p^* X_i = \prod_{j \in I_m^\tau} (\Delta_j)^{\varepsilon_{ij}},
\end{align}
where $\varepsilon_{ij}$ is the the number of arrows from $i$ to $j$ minus the number of arrows from $j$ to $i$, in the quiver of the $m$-triangulation of $\tau$.

\subsection{Coordinate rings and cluster structure of $\mathscr{X}$-space and $\mathscr{A}$-space}
\label{subsec:coordinae_rings}

We now recall the definition of a {\em cluster ensemble}, defined in \cite{FG03}, as an analog of a cluster algebra which is defined in \cite{FZ}. We use a slightly modified version as used in \cite{FG06}. We first need to define a combinatorial data, called {\em seed}.
\begin{definition}[seed]
A {\em seed} ${\bf i} = (I,J,\varepsilon_{ij}, d_i)$ consists of a finite set $I$, an integer-valued function $(\varepsilon_{ij})$ on $I \times I$, called a {\em cluster function}, a $\mathbb{Q}_{>0}$-valued {\em symmetrizer function $d_i$} on $I$ such that $\til{\varepsilon}_{ij} := d_i \varepsilon_{ij}$ is skew-symmetric, and a subset $J$ of $I$. The complement $I - J$ is called the {\em frozen subset} of $I$.\footnote{A `seed' in the usual cluster theory is equipped with cluster variables, like our upcoming $X_i$'s and $\Delta_i$'s. A seed as defined here is called a `feed' in \cite{FG09} as a joke, to be distinguished from a usual `seed'.}
\end{definition}
If $\varepsilon_{ij}$ is skew-symmetric, we set $d_i=1$. There is a rule for transforming a seed into another one `along the direction' of any element $k$ of $J$, which is usually called a {\em mutation}.
\begin{definition}[mutation]
\label{def:mutation}
Given a seed ${\bf i} = (I,J,\varepsilon, d)$, to every non-frozen element $k \in J$ is associated a new seed $\mu_k({\bf i}) = {\bf i}' = (I',J',\varepsilon',d')$, given by $I' := I$, $J' := J$, $d' := d$, and 
\begin{align}
\varepsilon'_{ij} := 
\left\{ {\renewcommand{\arraystretch}{1.2}
\begin{array}{ll}
-\varepsilon_{ij} & \mbox{if } k\in \{i,j\}, \\
\varepsilon_{ij} + \frac{ |\varepsilon_{ik}| \varepsilon_{kj} + \varepsilon_{ik} |\varepsilon_{kj}|}{2} & \mbox{if } k \notin \{i,j\}.
\end{array}} \right.
\end{align}
We say ${\bf i}'$ is obtained by applying to ${\bf i}$ the {\em mutation in the direction $k$}, or {\em mutation at $k$}, and write as $\mu_k$ in short.
\end{definition}
As observed in \cite{FG03}, a seed is a version of the notion of a quiver, enhanced by multipliers at vertices. In the case when $\varepsilon_{ij}$ is skew-symmetric and therefore $d_i =1$, one can visualize a seed as a quiver, as follows. The set of vertices is in bijection with $I$, so one can label the vertices by elements of $I$. Between the vertices $i$ and $j$, there are $|\varepsilon_{ij}|$ arrows from $i$ to $j$ if $\varepsilon_{ij}>0$. In particular, all arrows between any two vertices are of a same orientation, and there is no arrow from a vertex to itself. The following mutation rule for quivers is well-known.

\begin{lemma}[quiver mutation]
\label{lem:quiver_mutation}
Seed mutation $\mu_k$ in the direction $k \in J$ can be interpreted in terms of a {\em quiver mutation} as follows. First, from the initial quiver associated to the seed ${\bf i}$, reverse the direction of all the arrows going from or to the vertex $k$. Now, for each ordered pair $(i,j)$ of distinct vertices $i,j \in I$ different from $k$, we `add' $\varepsilon_{ik} \varepsilon_{kj}$ arrows from $i$ to $j$ if $\varepsilon_{ik}>0$ and $\varepsilon_{kj}>0$. This addition of arrows can be understood graphically as `completing cycles of length $3$ passing through $k$ in the middle'. Then, we make cancellation of arrows after such an addition, i.e. if there are two arrows running between two vertices with opposite orientations then we delete both arrows; repeat until there are no such cycles of length $2$.
\end{lemma}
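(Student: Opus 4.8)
The plan is to verify directly that the cluster function $\varepsilon'_{ij}$ produced by Def.\ref{def:mutation} coincides with the signed (``net'') arrow count of the quiver produced by the recipe in the statement. Since by hypothesis the seed ${\bf i}$ is a quiver, $\varepsilon$ is skew-symmetric, has no loops ($\varepsilon_{ii}=0$) and no $2$-cycles (between any two vertices all arrows have the same orientation); a one-line computation from the mutation formula, using skew-symmetry, shows that $\varepsilon'$ is again skew-symmetric, so it too is realized by a bona fide quiver, namely the one having $[\varepsilon'_{ij}]_+$ arrows from $i$ to $j$, where $[x]_+:=\max(x,0)$. Thus the content of the lemma is entirely the matching of arrow multiplicities, and the ``repeat until there are no $2$-cycles'' clause simply amounts to passing from a formal $\mathbb{Z}$-valued arrow count to its positive part.

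First I would dispose of the case $k\in\{i,j\}$: reversing every arrow incident to $k$ negates the signed number of arrows between $k$ and each other vertex, which is precisely $\varepsilon'_{ik}=-\varepsilon_{ik}$ and $\varepsilon'_{kj}=-\varepsilon_{kj}$; the second step of the recipe completes $3$-cycles \emph{through} $k$ and so adds no arrows incident to $k$, and creates neither a loop nor a $2$-cycle at $k$. For $k\notin\{i,j\}$, the recipe changes the signed arrow count from $i$ to $j$ by $N_{i\to j}-N_{j\to i}$, where $N_{i\to j}=\varepsilon_{ik}\varepsilon_{kj}$ when $\varepsilon_{ik}>0$ and $\varepsilon_{kj}>0$ and $N_{i\to j}=0$ otherwise, and $N_{j\to i}$ is the same quantity with $i$ and $j$ interchanged; here the subsequent cancellation of oppositely oriented arrows between $i$ and $j$ is exactly what converts the pair $(N_{i\to j},N_{j\to i})$ of added arrows into the net contribution $N_{i\to j}-N_{j\to i}$, and this output depends only on the net counts, since adding and cancelling arrows between a fixed pair of vertices are commuting operations that never create a loop.

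It then remains to check the elementary identity
\begin{align}
\nonumber
N_{i\to j}-N_{j\to i}=\frac{|\varepsilon_{ik}|\,\varepsilon_{kj}+\varepsilon_{ik}\,|\varepsilon_{kj}|}{2},
\end{align}
after which $\varepsilon_{ij}+N_{i\to j}-N_{j\to i}=\varepsilon'_{ij}$, as desired. Writing $a=\varepsilon_{ik}$ and $b=\varepsilon_{kj}$ and using skew-symmetry ($\varepsilon_{jk}=-b$, $\varepsilon_{ki}=-a$), one has $N_{i\to j}=[a]_+[b]_+$ and $N_{j\to i}=[-a]_+[-b]_+$, so the claim is $[a]_+[b]_+-[-a]_+[-b]_+=\tfrac12(|a|b+a|b|)$, which is verified by inspecting the sign patterns of $a,b$: both sides vanish when $a$ and $b$ have opposite signs or one of them is $0$, both equal $ab$ when $a,b>0$, and both equal $-ab$ when $a,b<0$. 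I do not expect a genuine obstacle here; the only points deserving a word are the well-definedness of the ``add, then cancel'' prescription and the verification that no loop or $2$-cycle survives, both of which are immediate, so the main (mild) work is the sign bookkeeping above.
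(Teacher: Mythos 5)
The paper states this lemma as a well-known fact and gives no proof, so there is no argument to compare against. Your verification is correct and is the standard one: matching the combinatorial recipe against the mutation formula of Def.\ref{def:mutation}, reducing to the sign identity $[a]_+[b]_+-[-a]_+[-b]_+=\tfrac12(|a|b+a|b|)$, and checking it by the four sign patterns of $(a,b)$. The side remarks are also accurate: skew-symmetry is preserved, no loops or $2$-cycles at $k$ are created, and the "add then cancel" step between a fixed pair of vertices depends only on the net signed count.
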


To each seed is associated a split algebraic torus, and to each mutation a birational map between the tori, which is in fact the heart of the cluster theory. Recall that a split algebraic torus is a product of the multiplicative group $\mathbb{G}_m$, where $\mathbb{G}_m(F) = F^*$ for each field $F$. For a cluster ensemble we consider the following two split algebraic tori for a given seed ${\bf i}$:
\begin{align}
\label{eq:seed_tori}
\mathscr{X}_{\bf i} := (\mathbb{G}_m)^J, \qquad\qquad
\mathscr{A}_{\bf i} := (\mathbb{G}_m)^I,
\end{align}
called the {\em seed $\mathscr{X}$-torus} and the {\em seed $\mathscr{A}$-torus}, respectively. Let $\{X_i\}_{i\in J}$ and $\{\Delta_i\}_{i\in I}$ be the natural coordinates for each of the two tori. The seed mutation $\mu_k$ from ${\bf i}$ to ${\bf i}'$ induces a rational map between the corresponding two $\mathscr{X}$-tori and that between the corresponding two $\mathscr{A}$-tori. These rational maps are denoted by $\mu_k : \mathscr{X}_{\bf i} \to \mathscr{X}_{{\bf i}'}$ and $\mu_k : \mathscr{A}_{\bf i} \to \mathscr{A}_{{\bf i}'}$, given by the formulas
\begin{align}
\label{eq:X_mutation_formula}
& \mu_k^* X_i' = \left\{
{\renewcommand{\arraystretch}{1.2}
\begin{array}{ll}
X_k^{-1} & \mbox{if $i=k$}, \\
X_i(1 + X_k^{-{\rm sgn}(\varepsilon_{ik})})^{-\varepsilon_{ik}} & \mbox{if $i\neq k$}, 
\end{array}}
\right. \\
\label{eq:A_mutation_formula}
& \mu_k^* \Delta_i' = \left\{
{\renewcommand{\arraystretch}{1.2}
\begin{array}{ll}
\Delta_i & \mbox{if $i \neq k$}, \\
\displaystyle \Delta_k^{-1} \left( \prod_{\{ j \in I : \varepsilon_{kj}>0 \} } \Delta_j^{\varepsilon_{kj}}  +  \prod_{\{ j \in I : \varepsilon_{kj}<0 \} } \Delta_j^{-\varepsilon_{kj}} \right) & \mbox{if $i = k$},
\end{array}}
\right.
\end{align}
where $X_i'$ and $\Delta_i'$ denote the cluster coordinates of the tori associated to the mutated seed $\mu_k({\bf i}) = {\bf i}'$, and ${\rm sgn}(\varepsilon)$ is defined to be $1$ if $\varepsilon\ge 0$ and $-1$ otherwise. If exactly one of the two sets $\{ j \in I : \varepsilon_{kj} >0 \}$ and $\{ j \in I : \varepsilon_{jk} < 0 \}$ is empty, then the corresponding product $\prod_{\{ j \in I : \varepsilon_{kj}>0 \} } \Delta_j^{\varepsilon_{kj}}$ or $\prod_{\{ j \in I : \varepsilon_{kj}<0 \} } \Delta_j^{-\varepsilon_{kj}}$ in \eqref{eq:A_mutation_formula} is set to be $1$. If $\varepsilon_{kj} = 0$ for all $j\in I$, then $\mu_k^* \Delta_k' = 2 \Delta_k^{-1}$, and we set $\mu_k^* X_k' = X_k^{-2}$.

\vs

In our example, we set $I = I_m^\tau$, $J = J_m^\tau$ for an ideal triangulation $\tau$ of a marked hyperbolic surface $\wh{S}$, where $A$- and $X$-coordinates are as given in \S\ref{subsec:A_X_coordinates}.
\begin{proposition}[\cite{FG03}]
For our case just mentioned, the map $p$ \eqref{eq:our_p} commutes with the mutations \eqref{eq:X_mutation_formula} and \eqref{eq:A_mutation_formula}.
\end{proposition}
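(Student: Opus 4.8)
The plan is to verify directly, at the level of the seed tori, that the square
\begin{CD}
\mathscr{A}_{\bf i} @>{\ \mu_k\ }>> \mathscr{A}_{{\bf i}'} \\
@V{p}VV @VV{p}V \\
\mathscr{X}_{\bf i} @>{\ \mu_k\ }>> \mathscr{X}_{{\bf i}'}
\end{CD}
commutes, where ${\bf i}' = \mu_k({\bf i})$, $k\in J$, and $p$ is the monomial map \eqref{eq:our_p} (the two vertical arrows using the cluster functions $\varepsilon$ and $\varepsilon'$ of ${\bf i}$ and ${\bf i}'$ respectively). On coordinate functions this amounts to showing $p^*(\mu_k^* X_i') = \mu_k^*(p^* X_i')$ for every $i\in J$: on the left one applies the $\mathscr{X}$-mutation \eqref{eq:X_mutation_formula} and then substitutes $X_\ell \mapsto \prod_{j\in I}\Delta_j^{\varepsilon_{\ell j}}$, while on the right one uses $p^* X_i' = \prod_{j\in I}(\Delta_j')^{\varepsilon'_{ij}}$ and then applies the $\mathscr{A}$-mutation \eqref{eq:A_mutation_formula}. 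Nothing in this identity uses that the seed comes from an $m$-triangulation, so it suffices to prove it for an arbitrary seed; it is in essence Fock--Goncharov's observation that $p$ is a morphism of cluster ensembles. Abbreviate $M_+ := \prod_{\{j\,:\,\varepsilon_{kj}>0\}}\Delta_j^{\varepsilon_{kj}}$ and $M_- := \prod_{\{j\,:\,\varepsilon_{kj}<0\}}\Delta_j^{-\varepsilon_{kj}}$ (each set to $1$ if its index set is empty), so that $p^* X_k = M_+/M_-$ and $\mu_k^*\Delta_k' = \Delta_k^{-1}(M_++M_-)$.

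The case $i=k$ is immediate: $\mu_k^* X_k' = X_k^{-1}$ gives left side $\prod_{j}\Delta_j^{-\varepsilon_{kj}} = M_-/M_+$, while $\varepsilon'_{kj}=-\varepsilon_{kj}$ together with $\varepsilon_{kk}=0$ (no $1$-cycles) and $\mu_k^*\Delta_j' = \Delta_j$ for $j\neq k$ gives the same on the right. Now take $i\neq k$. Substituting $p^* X_k = M_+/M_-$ into \eqref{eq:X_mutation_formula} and using $1 + (M_+/M_-)^{-1} = (M_++M_-)/M_+$ when $\varepsilon_{ik}>0$, resp. $1 + M_+/M_- = (M_++M_-)/M_-$ when $\varepsilon_{ik}<0$, the left side becomes
\begin{align*}
\Big(\textstyle\prod_{j\in I}\Delta_j^{\varepsilon_{ij}}\Big)\,M_{\pm}^{\varepsilon_{ik}}\,(M_++M_-)^{-\varepsilon_{ik}},
\end{align*}
with $M_+$ chosen if $\varepsilon_{ik}>0$ and $M_-$ chosen if $\varepsilon_{ik}<0$ (and simply $\prod_j\Delta_j^{\varepsilon_{ij}}$ if $\varepsilon_{ik}=0$).

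For the right side, the factor at $j=k$ is $(\mu_k^*\Delta_k')^{\varepsilon'_{ik}} = (\mu_k^*\Delta_k')^{-\varepsilon_{ik}} = \Delta_k^{\varepsilon_{ik}}(M_++M_-)^{-\varepsilon_{ik}}$, while for $j\neq k$ one has $\mu_k^*\Delta_j' = \Delta_j$ and $\varepsilon'_{ij} = \varepsilon_{ij} + C_{ij}$ with $C_{ij} := \tfrac{1}{2}\big(|\varepsilon_{ik}|\varepsilon_{kj} + \varepsilon_{ik}|\varepsilon_{kj}|\big)$. A short inspection of the four sign combinations of $(\varepsilon_{ik},\varepsilon_{kj})$ shows that when $\varepsilon_{ik}>0$ one has $C_{ij}=\varepsilon_{ik}\varepsilon_{kj}$ on $\{j:\varepsilon_{kj}>0\}$ and $C_{ij}=0$ otherwise, whereas when $\varepsilon_{ik}<0$ one has $C_{ij}=-\varepsilon_{ik}\varepsilon_{kj}$ on $\{j:\varepsilon_{kj}<0\}$ and $C_{ij}=0$ otherwise; in either case $\prod_{j\neq k}\Delta_j^{C_{ij}} = M_{\pm}^{\varepsilon_{ik}}$ with the same choice of sign as above (and $=1$ when $\varepsilon_{ik}=0$). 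Multiplying the $j=k$ and $j\neq k$ contributions, the right side equals $\big(\prod_{j\in I}\Delta_j^{\varepsilon_{ij}}\big)M_{\pm}^{\varepsilon_{ik}}(M_++M_-)^{-\varepsilon_{ik}}$, which matches the left side; the degenerate possibilities ($M_+$ or $M_-$ an empty product, or $\varepsilon_{kj}\equiv 0$) are covered by the conventions stated after \eqref{eq:A_mutation_formula}.

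There is no real conceptual obstacle here: the content is exactly the standard compatibility of the two tori of a cluster ensemble. The only thing requiring care is the sign bookkeeping in the half-integer correction term of the $\varepsilon$-mutation --- one must check that $\tfrac{1}{2}(|\varepsilon_{ik}|\varepsilon_{kj}+\varepsilon_{ik}|\varepsilon_{kj}|)$ picks out precisely the ``positive part'' (resp.\ ``negative part'') of the column $(\varepsilon_{kj})_j$ that is needed to reproduce the factor $(1+X_k^{\mp1})^{-\varepsilon_{ik}}$ of the $\mathscr{X}$-mutation after the substitution $X_k\mapsto M_+/M_-$ --- together with remembering to account for the $j=k$ term $\Delta_k^{\varepsilon_{ik}}$ on both sides.
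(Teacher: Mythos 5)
Your verification is correct. The paper does not give a proof of this proposition but cites \cite{FG03}; your direct computation at the level of seed tori --- substituting $p^*X_k = M_+/M_-$ into the $\mathscr{X}$-mutation, expanding the $j=k$ factor via the $\mathscr{A}$-mutation, and checking that the half-integer correction $\tfrac12\bigl(|\varepsilon_{ik}|\varepsilon_{kj}+\varepsilon_{ik}|\varepsilon_{kj}|\bigr)$ produces exactly $M_{\pm}^{\varepsilon_{ik}}$ --- is precisely the standard Fock--Goncharov argument that $p$ is a cluster ensemble morphism, and the sign bookkeeping in the four cases is handled correctly. Nothing in the argument uses the $m$-triangulation quiver beyond the absence of $1$-cycles (so $\varepsilon_{kk}=0$ in the $i=k$ case), so the proof is valid for any seed, as you note.
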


\vs

For a marked hyperbolic surface $\wh{S}$ and an ideal triangulation $\tau$ of it, the space $\mathscr{A}_{{\rm SL}_m,\wh{S}}$ is equipped with the `Weil-Petersson' $2$-form
\begin{align}
\label{eq:Omega_SL_m}
\Omega_{{\rm SL}_m,\wh{S}} = \sum_{i_1, i_2 \in I_m^\tau} \varepsilon_{i_1,i_2} \, d\log \Delta_{i_1} \wedge d \log \Delta_{i_2},
\end{align}
invariant under the mutation formula \eqref{eq:A_mutation_formula} in the sense described in \S\ref{subsec:quantization_of_Teichmuller_spaces}, and the space $\mathscr{X}_{{\rm PGL}_m,\wh{S}}$ has a Poisson structure given by
$$
\{ X_{j_1}, X_{j_2} \}_{{\rm PGL}_m,\wh{S}} =  \varepsilon_{j_1,j_2} \, X_{j_1} X_{j_2}, \qquad \forall j_1,j_2\in J^\tau_m,
$$
invariant under the mutation formula \eqref{eq:X_mutation_formula} in the sense described in \S\ref{subsec:quantization_of_Teichmuller_spaces}. One may view these invariances as well-definedness of the above two equalities. Then these two structures are compatible under the map $p$; the fibers of $p$ are the leaves of the null-foliation of $\Omega_{{\rm SL}_m,\wh{S}}$, and the symplectic structure induced on $p(\mathscr{A}_{{\rm SL}_m,\wh{S}})$ coincides with the restriction of the Poisson structure on $\mathscr{X}_{{\rm PGL}_m,\wh{S}}$. These should be formulated more precisely using the groupoid of ideal triangulations of $\wh{S}$, but roughly speaking, these data, the $\mathscr{A}$-space, the $\mathscr{X}$-space, and the map $p$, constitute a {\em cluster ensemble} of Fock-Goncharov \cite{FG03}.

\subsection{Ratio coordinates}
\label{subsec:ratio_coordinates}

For an analog of dotted triangulations of Kashaev, we consider the $m(m-1)/2$ `upside-down triangles' contained in each of the ideal triangle of an ideal triangulation $\tau$ of a marked surface $\wh{S}$, where by upside-down triangles we mean the triangles formed by internal edges of the $m$-triangulation (Def.\ref{def:m-triangulation}) of a triangle which are $1/m$-scaled and $180$ degree rotated copies of the parent triangle. To each upside-down triangle, we choose a distinguished corner, indicated by a dot $\bullet$, and we shall assign two new coordinates to each of these dotted upside-down triangles. In order to conveniently keep track of the `mutations' of such data, we introduce the notion of a {\em quiver with dotted triangles}.
\begin{definition}[quiver with dotted triangles]
Consider a quiver. A {\em shaded triangle} is an  unordered choice of three distinct vertices of the quiver, such that there exist at least two edges of the quiver running among them, together with a distinguished vertex out of the three. A shaded triangle is called {\em ordinary} if all its sides are edges of the quiver, and {\em defective} if one of its sides is not an edge of the quiver; we call the side of a defective shaded triangle that is not an edge of the quiver an {\em invisible edge}. We allow only the cases when the sides of a shaded triangle are cyclically oriented, whether it is ordinary or defective.

\vs

A {\em quiver with dotted triangles} is a quiver, together with the choice of a set of shaded triangles, along with a bijection from this set to an index set $I$, such that
\begin{enumerate}
\item every vertex of the quiver is a vertex of at least one shaded triangle,
\item a defective shaded triangle always appear in a pair, where these two share one side, which is an invisible edge, and the sides of thus formed quadrilateral are cyclically oriented.
\end{enumerate}
\end{definition}
Each shaded triangle is depicted with grey color in pictures (following its name), the distinguished vertex is indicated by a dot $\bullet$ in that corner, an invisible edge is drawn as a dotted line, and the $I$-labels of triangles are written within brackets $[~]$. See Fig.\ref{fig:til_T_ts_realization} for examples.

\vs

Let us give an example of a quiver with dotted triangles, denoted by $Q_m$, associated to a `dotted triangulation' of a marked hyperbolic surface $\wh{S}$.
\begin{definition}[\cite{Ki12}]
\label{def:dotted_triangulation}
A {\em dotted triangulation $(\tau,D,L)$} of $\wh{S}$ is an ideal triangulation $\tau$ of $\wh{S}$ together with the rule $D$ assigning a distinguished corner to each ideal triangle of $\tau$ indicated by a dot $\bullet$ in pictures, and a bijection $L$ from the set of all ideal triangles of $\tau$ to some index set $I$. We denote $(\tau,D,L)$ by $\tau_{\rm dot}$ if $D$ and $L$ are clear from the context.
\end{definition}
Let $\tau_{\rm dot} = (\tau,D,L)$ be a dotted triangulation of $\wh{S}$. Consider the $m$-triangulation of $\tau$, and let the quiver associated to it, as described in \S\ref{subsec:higher_Teichmuller_spaces}, be the underlying quiver for $Q_m$. In particular, the set of vertices of this quiver equals $I_m^\tau$. All the upside-down triangles constitute the set of shaded triangles of $Q_m$. For each ideal triangle of $\tau$, the choice of a distinguished corner for this triangle from the data $\tau_{\rm dot}$ induces dots for all the upside-down triangles contained in it, by assigning to each upside-down triangle the dot at the  corner farthest to the distinguished corner of its parent ideal triangle. See Fig.\ref{fig:til_T_ts_realization} for an example. The $m(m-1)/2$ shaded triangles in an ideal triangle $t$ get labeled as follows: rotate the whole ideal triangle $t$ so that its distinguished corner is placed on top. Then we divide the shaded triangles into $m-1$ rows; the first row has one shaded triangle, and the second has two, and so on. The $c$-th shaded triangle from the left at the $r$-th row gets labeled by the symbol $t_{r,c}$; so $r$ and $c$ indicate the row and column numbers representing the position of the shaded triangle $t_{r,c}$ with respect to the distinguished corner of $t$. For later use, let us define the index set $\mcal{S}_m$ for the shaded triangles of each ideal triangle:
\begin{align}
\label{eq:mcal_S_m}
\mcal{S}_m := \{ (r,c) \in \mathbb{Z}^2 \, : \, 1\le r\le m-1, ~ 1\le c \le r \}.
\end{align}
This completes the description of our quiver with dotted triangles $Q_m = Q_m(\tau_{\rm dot})$. 

\vs

For any given quiver with dotted triangles, we assign `ratio coordinates' to each shaded triangle:
\begin{definition}[ratio coordinates associated to a quiver with dotted triangles]
\label{def:ratio_coordinates}
Suppose that we have a quiver with dotted triangles $Q$. For each vertex $v$ of $Q$, let $\Delta_v$ be the cluster coordinate at $v$ of the seed $\mathscr{A}$-torus \eqref{eq:seed_tori} associated to the underlying quiver of $Q$. 
For each shaded triangle $s$, the {\em logarithmic ratio coordinates} $q_s$, $p_s$ are defined by 
\begin{align}
q_s := \log \Delta_{a_1} - \log \Delta_{a_2}, \qquad
r_s := \log \Delta_{a_3} - \log \Delta_{a_2},
\end{align}
where $a_1,a_2,a_3$ are the vertices of $s$ in the cyclic order given by the orientations of the sides of $s$ with the dot at $a_2$. The {\em ratio coordinates $Y_s$, $Z_s$} are defined to be their exponentials:
\begin{align}
\label{eq:YZ}
Y_s := \exp(q_s) = \Delta_{a_1} / \Delta_{a_2}, \qquad
Z_s := \exp(p_s) = \Delta_{a_3} / \Delta_{a_2}.
\end{align}
Let the {\em higher Kashaev algebra} $\mathscr{K}^Q(F)$ be the quotient algebra of the algebra generated by all $Y_s$'s and $Z_s$'s with their inverses, as a subalgebra of the algebra generated by all $\Delta_v^{\pm 1}$'s over the base field $F$.
\end{definition}

\begin{definition}[ratio coordinates for higher Teichm\"uller spaces]
Let $G = {\rm SL}_m$, $m\ge 2$. Let $\wh{S}$ be a marked hyperbolic surface, and let $\tau_{\rm dot}$ be a dotted triangulation of $\wh{S}$. Then the {\em ratio coordinates for higher Teichm\"uller space $\mathscr{A}_{G,\wh{S}}$} are given by the coordinates defined in Def.\ref{def:ratio_coordinates} for the quiver with dotted triangles $Q =Q_m = Q_m(\tau_{\rm dot})$. 
\end{definition}
As the shaded triangles of $Q_m(\tau_{\rm dot})$ are labeled by $t_{r,c}$, we get a system of `coordinate functions' $\{ (q_{t_{r,c}}, p_{t_{r,c}} ) \}_{t,r,c}$, or the exponential version $\{ (Y_{t_{r,c}}, Z_{t_{r,c}} ) \}_{t,r,c}$, for the space $\mathscr{A}_{G,\wh{S}}$. However, these `coordinates' are not algebraically independent:
\begin{lemma}[linear/algebraic relations among ratio coordinates]
\label{lem:linear_algebraic_relations_among_ratio_coordinates}
Consider any closed loop in the underlying quiver of $Q=Q_m(\tau_{\rm dot})$. Suppose $a_1,a_2,\ldots,a_{\ell+1}$ is the sequence of vertices of $Q$ which this loop traverses through, so that $a_{\ell+1} = a_1$. For each $j=1,2,\ldots,\ell$, the edge connecting $a_j$ and $a_{j+1}$ is an edge of some shaded triangle $s_j$ of $Q$, and therefore $\Delta_{a_j}/\Delta_{a_{j+1}}$ coincides with one of $Y_{s_j}^{\pm 1}$, $Z_{s_j}^{\pm 1}$, or $(Y_{s_j}/Z_{s_j})^{\pm 1}$. Thus the identity $\prod_{j=1}^n (\Delta_{a_j}/\Delta_{a_{j+1}}) = 1$ becomes an algebraic equation satisfied by $Y_s$'s and $Z_s$'s. Its logarithmic version $\sum_{j=1}^n (\log \Delta_{a_j} - \log \Delta_{a_{j+1}})=0$ becomes a linear equation satisfied by $q_s$'s and $p_s$'s. \qed
\end{lemma}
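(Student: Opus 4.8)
The plan is to treat Lemma~\ref{lem:linear_algebraic_relations_among_ratio_coordinates} as what it is: a telescoping identity, whose only genuine content beyond the statement itself is the combinatorial fact that every arrow of the underlying quiver of $Q_m(\tau_{\rm dot})$ is a side of exactly one shaded triangle. So I would first dispose of this combinatorial point, then write out the telescoping and translate it into the $Y,Z$ and $q,p$ languages.

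For the combinatorial step: the arrows of the underlying quiver of $Q_m(\tau_{\rm dot})$ are, by the construction in \S\ref{subsec:higher_Teichmuller_spaces}, precisely the oriented internal edges of the $m$-triangulation of $\tau$, and each internal edge lies inside a single ideal triangle $t$ of $\tau$ (an internal edge meets the boundary of its parent triangle only possibly at its endpoints, and those are never corners). Inside the $m$-triangulation of $t$, two small triangles sharing an edge always have opposite orientations, so each internal edge is the common side of exactly one up-pointing and one down-pointing small triangle; since the shaded triangles of $Q_m(\tau_{\rm dot})$ are by definition all the down-pointing (``upside-down'') triangles, every arrow is a side of exactly one shaded triangle, and an \emph{ordinary} one at that, an upside-down triangle touching no side of its parent ideal triangle. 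This reduces to the $m=2$ picture of a triangle cut into four alternately oriented pieces, and it justifies the assertion in the statement that each edge $a_ja_{j+1}$ of the given loop is an edge of a shaded triangle $s_j$. Next I would record, for a shaded triangle $s$ with vertices $a_1,a_2,a_3$ taken in the cyclic order of its sides' orientations with the dot at $a_2$, so that $Y_s=\Delta_{a_1}/\Delta_{a_2}$ and $Z_s=\Delta_{a_3}/\Delta_{a_2}$ by \eqref{eq:YZ}, that the ratio of the two endpoint $\Delta$'s across a side of $s$ in either direction is one of $Y_s^{\pm1}$ (side $a_1a_2$), $Z_s^{\pm1}$ (side $a_2a_3$), or $(Y_s/Z_s)^{\pm1}$ (side $a_3a_1$, using $\Delta_{a_3}/\Delta_{a_1}=Z_s/Y_s$). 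Applying this to each $s_j$ identifies every factor $\Delta_{a_j}/\Delta_{a_{j+1}}$ with one of $Y_{s_j}^{\pm1}$, $Z_{s_j}^{\pm1}$, $(Y_{s_j}/Z_{s_j})^{\pm1}$, while the product telescopes to $\Delta_{a_1}/\Delta_{a_{\ell+1}}=\Delta_{a_1}/\Delta_{a_1}=1$ since $a_{\ell+1}=a_1$; substituting gives the advertised algebraic relation inside the higher Kashaev algebra $\mathscr{K}^{Q_m(\tau_{\rm dot})}(F)$, and taking logarithms turns each term $\log\Delta_{a_j}-\log\Delta_{a_{j+1}}$ into $\pm q_{s_j}$, $\pm p_{s_j}$, or $\pm(q_{s_j}-p_{s_j})$, so that the vanishing of the sum is the asserted linear relation among the $q_s$'s and $p_s$'s.

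As for the main obstacle: there is essentially none, which is why the statement carries a \qed; the single point needing a sentence of justification is the combinatorial claim of the first step. If one wanted the analogue for an arbitrary quiver with dotted triangles rather than $Q_m(\tau_{\rm dot})$, one would have to add the hypothesis that the loop traverses only edges lying on shaded triangles, but for $Q_m(\tau_{\rm dot})$ this is automatic, so no extra assumption is needed and the proof above is complete.
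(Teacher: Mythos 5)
Your proposal is correct, and it matches the paper's (unstated) argument: the lemma is stated with \qed because it is a telescoping identity, and the statement itself already sketches the proof. The one genuine thing you rightly isolate and justify — that every arrow of the underlying quiver of $Q_m(\tau_{\rm dot})$ is a side of (exactly) one shaded triangle — follows because the arrows are exactly the internal edges of the $m$-triangulation, and each internal edge separates one right-side-up from one upside-down small triangle; the upside-down ones are the shaded triangles. From there the $Y^{\pm1},Z^{\pm1},(Y/Z)^{\pm1}$ identification from \eqref{eq:YZ}, the telescoping to $\Delta_{a_1}/\Delta_{a_{\ell+1}}=1$, and the passage to logarithms are all immediate, as you say. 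One very minor nitpick: your parenthetical ``touching no side of its parent ideal triangle'' is slightly inaccurate, since for instance at $m=2$ the vertices of the unique upside-down triangle all lie on the sides of the parent; what you mean, and what is used, is that none of its \emph{edges} lies on a side, which is why all shaded triangles of $Q_m(\tau_{\rm dot})$ are ordinary.
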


\vs

It is straightforward to verify the following, which suggests a `canonical' quantization of these coordinate systems, as we shall see in \S\ref{sec:new_quantization}:
\begin{lemma}
The $2$-form $\Omega_{{\rm SL}_m,\wh{S}}$ \eqref{eq:Omega_SL_m} becomes `diagonal' in terms of the logarithmic ratio coordinates:
\begin{align}
\label{eq:2-form_diagonal_higher}
\Omega_{{\rm SL}_m,\wh{S}} = - 2 \sum_{t,r,c} d p_{t_{r,c}} \wedge d q_{t_{r,c}}.
\end{align}
\end{lemma}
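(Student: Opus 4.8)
The plan is to reduce the computation of $\Omega_{{\rm SL}_m,\wh{S}}$ to a purely local calculation on a single ideal triangle, exploiting the additivity of both sides of \eqref{eq:2-form_diagonal_higher} over the ideal triangles of $\tau$. Recall from \eqref{eq:Omega_SL_m} that $\Omega_{{\rm SL}_m,\wh{S}} = \sum_{i_1,i_2\in I_m^\tau} \varepsilon_{i_1,i_2}\, d\log\Delta_{i_1}\wedge d\log\Delta_{i_2}$, where $\varepsilon_{i_1,i_2}$ counts signed arrows in the quiver of the $m$-triangulation; since arrows only run among vertices lying in a common ideal triangle, the double sum splits as $\sum_t \sum_{i_1,i_2 \text{ in } t} \varepsilon^{(t)}_{i_1,i_2}\, d\log\Delta_{i_1}\wedge d\log\Delta_{i_2}$ (an internal edge shared by two triangles of $\tau$ does not occur, since internal edges of an $m$-triangulation lie strictly inside one ideal triangle, and edges of $\tau$ itself carry no arrows between their subdivision vertices in a way that crosses triangles). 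Likewise the right-hand side is manifestly a sum over triangles $t$ and over $(r,c)\in\mcal{S}_m$. So it suffices to prove, for a single triangle $t$ with its $m$-triangulation quiver and its induced dots, the identity
\begin{align}
\label{eq:local_identity}
\sum_{i_1,i_2} \varepsilon^{(t)}_{i_1,i_2}\, d\log\Delta_{i_1}\wedge d\log\Delta_{i_2} = -2\sum_{(r,c)\in\mcal{S}_m} dp_{t_{r,c}}\wedge dq_{t_{r,c}}.
\end{align}

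Next I would unwind the right-hand side of \eqref{eq:local_identity} using the definitions in Def.\ref{def:ratio_coordinates}. For each shaded (here: upside-down) triangle $s = t_{r,c}$ with vertices $a_1,a_2,a_3$ in cyclic order and the dot at $a_2$, we have $q_s = \log\Delta_{a_1}-\log\Delta_{a_2}$ and $p_s = \log\Delta_{a_3}-\log\Delta_{a_2}$, so
\begin{align}
\nonumber
dp_s\wedge dq_s = (d\log\Delta_{a_3}-d\log\Delta_{a_2})\wedge(d\log\Delta_{a_1}-d\log\Delta_{a_2}).
\end{align}
Expanding, $dp_s\wedge dq_s = d\log\Delta_{a_3}\wedge d\log\Delta_{a_1} + d\log\Delta_{a_1}\wedge d\log\Delta_{a_2} + d\log\Delta_{a_2}\wedge d\log\Delta_{a_3}$, i.e. exactly the cyclic three-term expression attached to the triangle $s$ with its given cyclic orientation (compare the shape of \eqref{eq:2-form}). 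Therefore the right-hand side of \eqref{eq:local_identity} equals $-2$ times the sum, over all upside-down triangles in $t$, of the cyclic $2$-form of that triangle. The task is thus to show that $\varepsilon^{(t)}_{i_1,i_2}$, as a skew form on the lattice spanned by $d\log\Delta_i$, coincides with $-2$ times the sum of the cyclically-oriented "unit" skew forms of the upside-down triangles.

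For this combinatorial core I would argue directly from the structure of the quiver of an $m$-triangulated triangle. Each arrow of the quiver is a side of exactly one upside-down triangle and of exactly one right-side-up small triangle; the right-side-up triangles contribute no terms to the claimed formula, so I must verify that the net contribution $\varepsilon^{(t)}_{i_1,i_2}$ between two adjacent vertices $i_1,i_2$ is accounted for purely by the (unique, if it exists) upside-down triangle containing that edge — and with the correct sign $\pm 2$ matching the cyclic orientation. Concretely: an internal edge of the $m$-triangulation lies on the boundary between one up and one down small triangle, and the orientation rule (Def.\ref{def:m-triangulation}, each internal edge oriented parallel to an edge of $\tau$, which in turn is clockwise-oriented) forces the orientation of that edge to agree with the cyclic orientation of the down-triangle it bounds. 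An edge lying on a side of $t$ is shared only by one up-triangle, but such an edge is not an internal edge and $\varepsilon$ there must be checked to vanish or to be absorbed; here one uses that $d\log\Delta$ of the subdivision vertices on $\partial t$ still appear, and the arrows along $\partial t$ are oriented consistently so that their contributions telescope into the cyclic terms of the adjacent down-triangles. I would make this precise by an Euler-characteristic / double-counting bookkeeping: list for a generic small triangle of $T_m$ the three vertices, read off the three $\varepsilon$-contributions, and check the sum matches $-2$ times the down-triangle cyclic forms, using that the $m=2$ case (one down-triangle, the identity \eqref{eq:2-form} versus \eqref{eq:log_Kashaev_coordinates}) is already established in Kashaev's work and supplies the base case.

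The main obstacle I expect is precisely this last sign-and-multiplicity bookkeeping: one must be careful that each $d\log\Delta_{i_1}\wedge d\log\Delta_{i_2}$ pair is hit the right number of times with the right sign, in particular handling the three types of small edges (genuinely internal edges between an up and a down triangle, edges on $\partial t$, and — for the vertices that are excluded, the corners of $t$) uniformly, and confirming there is no leftover contribution from up-triangles or from edges shared along $\partial\tau$ between two ideal triangles. Once the per-edge contribution is pinned down, \eqref{eq:local_identity} follows by summing over edges and regrouping into down-triangles, and then \eqref{eq:2-form_diagonal_higher} follows by summing \eqref{eq:local_identity} over all ideal triangles $t$ of $\tau$. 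I would present the bookkeeping step as a short lemma about the quiver of $T_m$ and otherwise keep the argument at the level of the expansion displayed above.
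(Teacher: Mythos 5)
Your overall strategy --- reduce to a per-triangle identity, expand $dp_s \wedge dq_s = d\log\Delta_{a_1}\wedge d\log\Delta_{a_2} + d\log\Delta_{a_2}\wedge d\log\Delta_{a_3} + d\log\Delta_{a_3}\wedge d\log\Delta_{a_1}$, and match the $\varepsilon$-sum arrow by arrow via the bijection between quiver arrows and sides of the down-triangles --- is the right one (the paper gives no proof, declaring the verification straightforward). By skew-symmetry, $\sum_{i_1,i_2}\varepsilon_{i_1,i_2}\,d\log\Delta_{i_1}\wedge d\log\Delta_{i_2} = 2\sum_{i\to j} d\log\Delta_i\wedge d\log\Delta_j$, a sum over arrows; each arrow lies inside a unique ideal triangle, is the side of exactly one down-triangle, and the three arrows bounding any given down-triangle form a coherent directed $3$-cycle. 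So both sides of your local identity are sums of the same wedge monomials, one per arrow, and the entire lemma reduces to a single orientation comparison: whether the directed $3$-cycle of arrows around a down-triangle agrees with, or opposes, the cyclic labeling $a_1\to a_2\to a_3$ of Def.\ref{def:ratio_coordinates}. That comparison is precisely what produces the $-$ in the coefficient $-2$, and you explicitly defer it (``the main obstacle''). Until that one check is displayed --- a single generic internal edge of $T_m$ suffices, or you can use the established $m=2$ convention together with the observation that every down-triangle is a translate of the $m=2$ medial picture --- the proposal is a plan rather than a proof, since it cannot distinguish $+2$ from $-2$.

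Two secondary issues. Your paragraph on edges lying on $\partial t$ is confused: in the quiver as set up in \S\ref{subsec:A_X_coordinates}, only internal edges of the $m$-triangulation are oriented; the subdivision segments of edges of $\tau$ carry no arrows, hence contribute nothing to $\varepsilon$, and the ``telescoping'' you gesture at does not exist and is not needed. Recognizing this also makes the initial per-triangle split of $\Omega$ immediate --- $\Omega$ is a sum over arrows, each living in a unique ideal triangle, so $\Omega = \sum_t \Omega^{(t)}$ holds without any caveat about vertices shared along edges of $\tau$ --- and it removes the phantom case from the bookkeeping you propose to do at the end.
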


\subsection{Coordinate change maps for ratio coordinates}

To summarize, a dotted triangulation $\tau_{\rm dot} = (\tau,D,L)$ of $\wh{S}$ induces a quiver with dotted triangles $Q_m = Q_m(\tau_{\rm dot})$, which in turn gives rise to a system of functions $\{ (q_{t_{r,c}}, p_{t_{r,c}} ) \}_{t,r,c}$ or $\{ (Y_{t_{r,c}}, Z_{t_{r,c}} ) \}_{t,r,c}$ on the moduli space $\mathscr{A}_{{\rm SL}_m,\wh{S}}$ which we call `(ratio) coordinate functions', where $t$ runs through the ideal triangles of $\tau$, and $(r,c)$ runs in $\mcal{S}_m$ \eqref{eq:mcal_S_m}. 

\vs

For another choice $\tau'_{\rm dot}$, one obtains another collection of ratio coordinates $\{ (Y'_{t'_{r,c}}, Z'_{t'_{r,c}} ) \}_{t',r,c}$, and  we would like to express these new coordinates in terms of the old ones $Y_{t_{r,c}}$, $Z_{t_{r,c}}$. In order to do this, we will investigate the change of the quiver with dotted triangles $Q_m(\tau_{\rm dot})\leadsto Q_m(\tau'_{\rm dot})$ induced by the change of the dotted triangulation $\tau_{\rm dot} \leadsto \tau'_{\rm dot}$. We study the `elementary' changes of quivers with dotted triangles in the present subsection, together with the corresponding coordinate change formulas. In \S\ref{subsec:Kashaev_groupoid} we realize the change $Q_m(\tau_{\rm dot}) \leadsto Q_m(\tau'_{\rm dot})$ as a composition of elementary changes of quivers with dotted triangles, and thus express $Y'_{t_{r,c}'}$, $Z'_{t_{r,c}'}$ in terms of $Y_{t_{r,c}}$, $Z_{t_{r,c}}$.

\begin{definition}[elementary transformations of quivers with dotted triangles]
\label{def:elementary_transformations_of_qdt}
We describe three types of {\em elementary transformations} of quivers with dotted triangles. 
Let $Q$, $Q'$ be quivers with dotted triangles having same index set $I$. 
We say that $Q'$ is obtained by applying the respective elementary transformation to $Q$, if one of the following holds.

\begin{enumerate}
\item {\em mutation $T_{rs}$, for $r,s\in I$ :} \\
The shaded triangles of $Q$ labeled by $r$ and $s$ must share exactly one vertex. There must be two incoming arrows to and two outgoing arrows from this common vertex, all four arrows being distinct. Let $a,b,c,d,e$ denote the vertices of the shaded triangles $r$ and $s$ of $Q$, so that $a,b,c$ are the vertices of $r$ and $c,d,e$ are those of $s$, where $a$ and $d$ are the endpoints of the outgoing arrows from the common vertex $c$. The dot of $r$ must be at $b$, and that of $s$ at $c$. The mutation $T_{rs}$ is defined only for such $Q$.

\vs

The underlying quiver of $Q'$ is obtained by applying to that of $Q$ the quiver mutation in Lem.\ref{lem:quiver_mutation} at the vertex $c$; let us call $a,b,c,d,e$ the vertices of $Q'$ corresponding to the ones of $Q$ with the same names respectively. The shaded triangles of $Q$ other than $r$ and $s$ and the labeling rule for them stay the same for $Q'$, although sides of some shaded triangles may change from an edge to an invisible edge. We replace the shaded triangles $r$ and $s$ by new ones, also labeled by $r$ and $s$,  given as follows. The new triangle $r$ of $Q'$ is located at the vertices $a,c,e$, and the new $s$ at $b,c,d$, while the dot of new $r$ is at $c$, and that of new $s$ at $b$.

\begin{figure}[htbp!]
\begin{subfigure}[b]{0.36\textwidth}
\hspace{-3mm} \includegraphics[width=60mm]{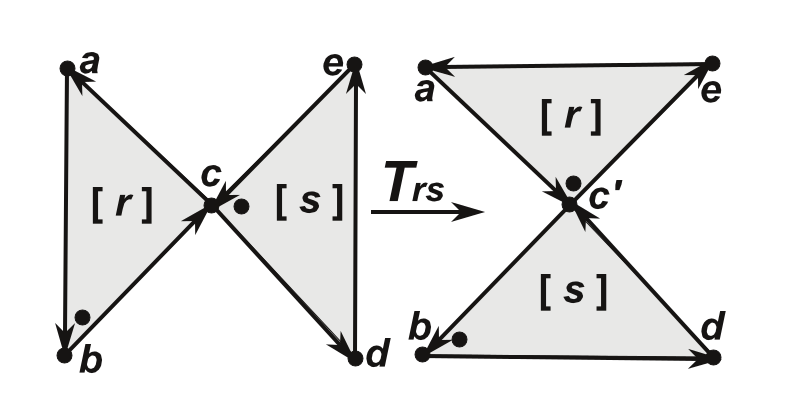}
\caption{An example of $T_{rs}$}
\label{fig:action_of_T_rs}
\end{subfigure}
\hfill
\begin{subfigure}[b]{0.32\textwidth}
\hspace{-3mm} \includegraphics[width=50mm]{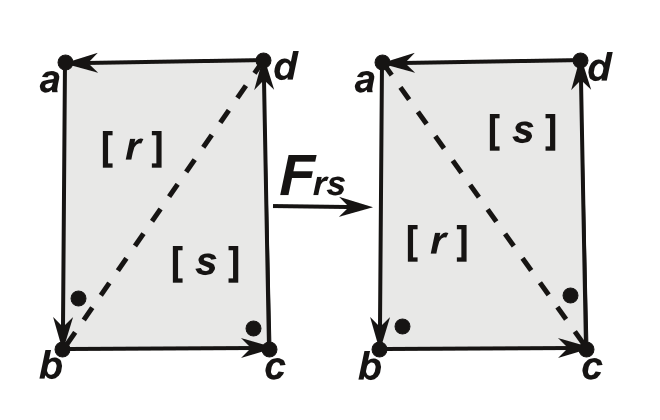}
\caption{An example of $F_{rs}$}
\label{fig:action_of_F_rs}
\end{subfigure}   
\hfill
\begin{subfigure}[b]{0.30\textwidth}
\includegraphics[width=40mm]{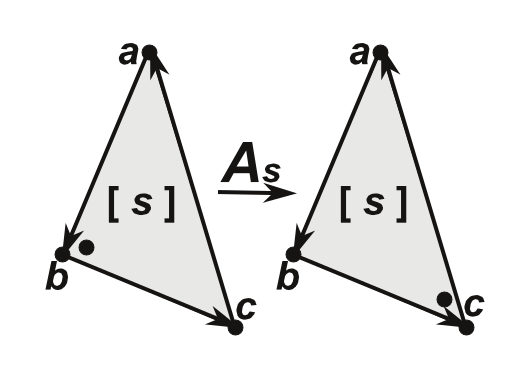}
\caption{An example of $A_s$}
\label{fig:action_of_A_s}
\end{subfigure}   
\hspace{-2mm}
\vspace{-2mm}
\caption{Some elementary transformations of quivers with dotted triangles}
\label{fig:elementary_transformations_of_Qdt}
\end{figure}

\item {\em invisible flip $F_{rs}$, for $r,s\in I$ :} \\
The shaded triangles $r$ and $s$ of $Q$ must be both defective, sharing an invisible edge. The four sides of the quadrilateral formed by $r$ and $s$ must be oriented cyclically. Let $a,b,c,d$ be the vertices of this quadrilateral, so that $a,b,d$ are the vertices of $r$ and $b,c,d$ those of $s$, while the orientations of the sides are $a\to b\to c\to d \to a$. The dot of $r$ must be at $b$, that of $s$ at $c$. 

\vs

The underlying quiver of $Q'$ is same as that of $Q$. The shaded triangles of $Q$ other than $r$ and $s$ and the labeling rule for them stay the same for $Q'$. We replace the triangles $r$ and $s$ in $Q$ by the ones of same name in $Q'$ as follows; the triangle $r$ of $Q'$ is at the vertices $a,b,c$ with the dot at $b$, and the triangle $s$ of $Q'$ is at $c,d,a$ with the dot at $c$. This can be thought of as rotating the invisible edge of the quadrilateral formed by $r$ and $s$ of $Q$ counterclockwise to obtain $Q'$.

\item {\em dot change $A_s$, for $s\in I$ : } \\
The only difference between $Q'$ and $Q$ is the position of the dot $\bullet$ of the shaded triangle $s$. The dot of triangle $s$ in $Q'$ is at the next corner to the one for the dot of $s$ in $Q$, with respect to the ordering given by the orientation of the sides.

\item {\em label change $P_\sigma$, for a permutation $\sigma$ of $I$ : } \\
The only difference between $Q'$ and $Q$ are the labels for shaded triangles. For each $s\in I$, the shaded triangle of $Q$ labeled by $s$ is labeled by $\sigma(s)$ in $Q'$.
\end{enumerate}
\end{definition}

In Def.\ref{def:ratio_coordinates}, to each quiver with dotted triangles $Q$ we associated ratio coordinates, which are expressed in terms of the coordinate functions $\Delta_v$ attached to vertices $v$ of the underlying quiver of $Q$. For each elementary transformation of quivers with dotted triangles applied to $Q$, consider the induced transformation of the underlying quiver, and correspondingly induced coordinate functions $\Delta'_{v'}$ at the vertices of the new quiver. The only nontrivial case is the mutation $T_{rs}$, in which case the induced transformation of the underlying quivers is the mutation at $c$. These new coordinate functions $\Delta'_{v'}$ are used for the new ratio coordinates. 

\vs

We are able to express the ratio coordinates for the new quiver with dotted triangles in terms of the old ratio coordinates:

\begin{lemma}[coordinate change formulas for elementary transformations of quivers with dotted triangles]
\label{lem:coordinate_change_for_elementary_transformation_of_Qdt}
Suppose that a quiver with dotted triangles $Q'$ is obtained from $Q$ by applying an elementary transformation; let $I$ be the index set of the shaded triangles of $Q$ and $Q'$. The ratio coordinate functions $Y'_s$, $Z'_s$ for $Q'$ are expressed in terms of $Y_s$, $Z_s$ for $Q$ as follows.
\begin{enumerate}
\item {\em when $Q' = T_{rs}.Q$ \, :}
\begin{align}
\label{eq:T_rs_coordinate_change_formula}
\left\{ {\renewcommand{\arraystretch}{1.2}
\begin{array}{l}
Y'_r = (Z_s + Y_r^{-1} Y_s)^{-1}, \\
Z'_r = (Y_r Z_r^{-1} Y_s^{-1} Z_s + Z_r^{-1})^{-1},
\end{array}} \right.
\qquad
{\renewcommand{\arraystretch}{1.2}
\begin{array}{l}
Y'_s = Y_r Z_s + Y_s, \\
Z'_s = Z_r Z_s.
\end{array}}
\end{align}

\item {\em when $Q' = F_{rs}.Q$ \, :}
\begin{align}
\label{eq:F_rs_coordinate_change_formula}
\left\{ {\renewcommand{\arraystretch}{1.2}
\begin{array}{l}
Y'_r = Y_r, \\
Z'_r = Z_r Z_s^{-1},
\end{array}} \right.
\qquad
{\renewcommand{\arraystretch}{1.2}
\begin{array}{l}
Y'_s = Y_s Y_r, \\
Z'_s = Z_s.
\end{array}}
\end{align}

\item {\em when $Q' = A_s. Q$ \, : }
\begin{align}
\label{eq:A_s_coordinate_change_formula}
Y'_s = Z_s^{-1}, \qquad
Z'_s = Y_s Z_s^{-1}.
\end{align}

\item {\em when $Q' = P_\sigma.Q$ \, : }
\begin{align*}
Y'_s = Y_{\sigma^{-1}(s)}, \qquad Z'_s = Z_{\sigma^{-1}(s)}, \qquad \forall s \in I.
\end{align*}
\end{enumerate}
In each case, the ratio coordinates for the shaded triangles of $Q'$ not mentioned in the above coordinate change formulas are equal to the corresponding ones of $Q$.
\end{lemma}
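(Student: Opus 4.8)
The plan is to reduce every assertion to the definition of the ratio coordinates as Laurent monomials in the vertex coordinates $\Delta_v$ (Def.~\ref{def:ratio_coordinates}), combined with the $\mathscr{A}$-mutation formula~\eqref{eq:A_mutation_formula}. For each of the four elementary transformations I would proceed in three steps: (i) record exactly how the family $\{\Delta_v\}$ of vertex coordinates is transformed; (ii) from the combinatorial recipe in Def.~\ref{def:elementary_transformations_of_qdt}, read off for each affected shaded triangle of $Q'$ its three vertices together with the cyclic order induced by the arrows and the position of the dot, and hence write $Y'$, $Z'$ as ratios of the $\Delta'$'s via~\eqref{eq:YZ}; (iii) substitute and simplify. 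For the shaded triangles of $Q'$ not named in the statement, it suffices to observe that their vertices are unchanged and that none of these vertices is affected in step (i), so their ratio coordinates are literally unchanged.

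For the three ``combinatorial'' transformations, step (i) is trivial: for $P_\sigma$, $A_s$ and $F_{rs}$ the underlying quiver of $Q'$ equals that of $Q$, so $\Delta'_v = \Delta_v$ for all $v$, and the whole content is the bookkeeping of dots and cyclic orders. For $P_\sigma$ the triangle labeled $s$ in $Q'$ is the one labeled $\sigma^{-1}(s)$ in $Q$, giving the formula at once. For $A_s$, if $s$ in $Q$ has vertices $a_1\to a_2\to a_3$ cyclically with dot at $a_2$, then in $Q'$ the dot moves to $a_3$; writing the vertices of $s$ in $Q'$ with the dotted vertex in the middle yields the triple $(a_2,a_3,a_1)$, so $Y'_s = \Delta_{a_2}/\Delta_{a_3} = Z_s^{-1}$ and $Z'_s = \Delta_{a_1}/\Delta_{a_3} = Y_s Z_s^{-1}$, which is~\eqref{eq:A_s_coordinate_change_formula}. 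For $F_{rs}$ one spells out, from the cyclic orientation $a\to b\to c\to d\to a$ of the quadrilateral and the prescribed dots, that $Y_r=\Delta_a/\Delta_b$, $Z_r=\Delta_d/\Delta_b$, $Y_s=\Delta_b/\Delta_c$, $Z_s=\Delta_d/\Delta_c$ hold for $Q$, while $Y'_r=\Delta_a/\Delta_b$, $Z'_r=\Delta_c/\Delta_b$, $Y'_s=\Delta_a/\Delta_c$, $Z'_s=\Delta_d/\Delta_c$ hold for $Q'$; dividing out the surviving $\Delta$'s gives~\eqref{eq:F_rs_coordinate_change_formula}.

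The substantive case is the mutation $T_{rs}$, where the underlying quiver of $Q'$ is the mutation of that of $Q$ at the common vertex $c$. The hypotheses on $T_{rs}$ pin down the cyclic orders $a\to b\to c\to a$ for $r$ (dot at $b$) and $c\to d\to e\to c$ for $s$ (dot at $c$), so $Y_r=\Delta_a/\Delta_b$, $Z_r=\Delta_c/\Delta_b$, $Y_s=\Delta_e/\Delta_c$, $Z_s=\Delta_d/\Delta_c$, and they force the only arrows incident to $c$ to be $b\to c$, $c\to a$, $c\to d$, $e\to c$, i.e.\ $\varepsilon_{ca}=\varepsilon_{cd}=1$, $\varepsilon_{cb}=\varepsilon_{ce}=-1$, and $\varepsilon_{cj}=0$ otherwise. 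Then~\eqref{eq:A_mutation_formula} gives $\Delta'_c=\Delta_c^{-1}(\Delta_a\Delta_d+\Delta_b\Delta_e)$ and $\Delta'_v=\Delta_v$ for $v\neq c$. Applying the quiver-mutation rule of Lem.~\ref{lem:quiver_mutation} at $c$ (reverse the four arrows at $c$, add $b\to a$, $b\to d$, $e\to a$, $e\to d$, then cancel the $2$-cycles $a\leftrightarrow b$ and $d\leftrightarrow e$ coming from the old sides of $r$ and $s$), one checks that in $Q'$ the relevant edges are $a\to c\to e\to a$ on the vertex set of the new $r=\{a,c,e\}$ and $b\to d\to c\to b$ on that of the new $s=\{b,c,d\}$, so both new triangles are indeed cyclically oriented; with the prescribed dots this gives the triples $(a,c,e)$ and $(c,b,d)$, hence $Y'_r=\Delta_a/\Delta'_c$, $Z'_r=\Delta_e/\Delta'_c$, $Y'_s=\Delta'_c/\Delta_b$, $Z'_s=\Delta_d/\Delta_b$. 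Substituting $\Delta'_c$ and dividing through by suitable monomials rewrites these in terms of $Y_r,Z_r,Y_s,Z_s$ and produces exactly~\eqref{eq:T_rs_coordinate_change_formula}; for instance $Y'_r=\Delta_a\Delta_c/(\Delta_a\Delta_d+\Delta_b\Delta_e)=(Z_s+Y_r^{-1}Y_s)^{-1}$ and $Y'_s=(\Delta_a\Delta_d+\Delta_b\Delta_e)/(\Delta_b\Delta_c)=Y_rZ_s+Y_s$.

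I expect the only real care to be needed in the case $T_{rs}$: correctly tracking which arrows are reversed, which are created, and which $2$-cycles cancel after the mutation at $c$, so as to confirm that the two new shaded triangles are cyclically oriented and to identify their dotted-vertex-in-the-middle triples; and observing that the degree-$4$ hypothesis at $c$ guarantees that no shaded triangle other than $r$ and $s$ has $c$ as a vertex, which is what makes all of the remaining ratio coordinates genuinely unchanged. The rest is elementary substitution of the Ptolemy-type relation~\eqref{eq:A_mutation_formula} followed by algebraic rearrangement.
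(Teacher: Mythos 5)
Your proposal is correct and follows essentially the same route as the paper's own proof: write each ratio coordinate as a quotient of $\Delta$'s according to Def.~\ref{def:ratio_coordinates}, record that only $\Delta_c$ changes (via the Ptolemy-type relation $\Delta_c\Delta'_c=\Delta_a\Delta_d+\Delta_b\Delta_e$) in the $T_{rs}$ case and that all $\Delta$'s are fixed in the other three, read off the new vertex triples from Def.~\ref{def:elementary_transformations_of_qdt}, and simplify. The only difference is that you spell out the quiver-mutation bookkeeping (arrow reversal, completion of $2$-paths through $c$, cancellation of the $a\!\leftrightarrow\!b$ and $d\!\leftrightarrow\!e$ digons) rather than taking it as given, and you add the useful remark that the degree-$4$ hypothesis at $c$ is what keeps the remaining ratio coordinates literally unchanged.
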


\begin{proof}
(1) Denote the vertices of the shaded triangles $r$ and $s$ of $Q$ by $a,b,c,d,e$, as described in Def.\ref{def:elementary_transformations_of_qdt}(1), and denote the corresponding vertices of $Q'$ by $a,b,c',d,e$; this is because the only vertex coordinate function of the underlying quiver of $Q$ that changes under the mutation corresponding to $T_{rs}: Q \leadsto Q'$ is the one at $c$. Then, from the definition \eqref{eq:YZ} of the ratio coordinates we have
\begin{align*}
\begin{array}{l}
Y_r = \Delta_a/\Delta_b, \\
Z_r = \Delta_c/\Delta_b,
\end{array}
\begin{array}{l}
Y_s = \Delta_e/\Delta_c, \\
Z_s = \Delta_d/\Delta_c,
\end{array}
\begin{array}{l}
Y'_r = \Delta_a/\Delta_{c'}, \\
Z'_r = \Delta_e/\Delta_{c'},
\end{array}
\begin{array}{l}
Y'_s = \Delta_{c'}/\Delta_b, \\
Z'_s = \Delta_d/\Delta_b.
\end{array}
\end{align*}
From the mutation formula \eqref{eq:A_mutation_formula} for the $A$-coordinates we have
$$
\Delta_c \Delta_{c'} = \Delta_a \Delta_d + \Delta_b \Delta_e.
$$
Hence we obtain \eqref{eq:T_rs_coordinate_change_formula} as follows:
\begin{align*}
Y'_r & = \frac{ \Delta_a \Delta_c}{ \Delta_a \Delta_d + \Delta_b \Delta_e} = \left( \frac{\Delta_d}{\Delta_c} + \frac{\Delta_b}{\Delta_a} \frac{\Delta_e}{\Delta_c} \right)^{-1} = (Z_s + Y_r^{-1} Y_s)^{-1}, \\
Z'_r & = \frac{ \Delta_e \Delta_c }{\Delta_a \Delta_d + \Delta_b \Delta_e} = \left( \frac{\Delta_a \Delta_d }{\Delta_e \Delta_c} + \frac{\Delta_b}{\Delta_c} \right)^{-1} = \left( \frac{\Delta_a}{\Delta_b} \frac{\Delta_b}{\Delta_c} \frac{\Delta_c}{\Delta_e}  \frac{\Delta_d}{\Delta_c} + \frac{\Delta_b}{\Delta_c} \right)^{-1} \\
& = (Y_r Z_r^{-1} Y_s^{-1} Z_s + Z_r^{-1})^{-1}, \\
Y'_s & = \frac{ \Delta_a \Delta_d + \Delta_b \Delta_e }{\Delta_c \Delta_b} = \frac{\Delta_a}{\Delta_b} \frac{\Delta_d}{\Delta_c} + \frac{\Delta_e}{\Delta_c} = Y_r Z_s + Y_s, \qquad
Z'_s = \frac{ \Delta_c}{\Delta_b} \frac{\Delta_d}{\Delta_c} = Z_r Z_s.
\end{align*}

\vs

(2) Denote the vertices of the shaded triangles $r$ and $s$ of $Q$ and $Q'$ by $a,b,c,d$, as described in Def.\ref{def:elementary_transformations_of_qdt}(2). Then we have
$$
\begin{array}{l}
Y_r = \Delta_a/\Delta_b, \\
Z_r = \Delta_d/\Delta_b,
\end{array}
\begin{array}{l}
Y_s = \Delta_b/\Delta_c, \\
Z_s = \Delta_d/\Delta_c,
\end{array}
\begin{array}{l}
Y'_r = \Delta_a/\Delta_b, \\
Z'_r = \Delta_c/\Delta_b,
\end{array}
\begin{array}{l}
Y'_s = \Delta_a/\Delta_c, \\
Z'_s = \Delta_d/\Delta_c.
\end{array}
$$
It is easy to verify \eqref{eq:F_rs_coordinate_change_formula}.

\vs

(3) Denote the vertices of the shaded triangle $s$ of $Q$ and $Q'$ by $a,b,c$, so that the orientations of the sides of $s$ are $a\to b\to c\to a$, and that the dot of $s$ is at $b$; then the dot of $s'$ must be at $c$. We thus have
\begin{align*}
\begin{array}{l}
Y_s = \Delta_a/\Delta_b, \\
Z_s = \Delta_c/\Delta_b,
\end{array}
\begin{array}{l}
Y'_s = \Delta_b/\Delta_c, \\
Z'_s = \Delta_a/\Delta_c,
\end{array}
\end{align*}
from which \eqref{eq:A_s_coordinate_change_formula} is immediate. We omit the proof of (4) as it is easy to see.
\end{proof}

\subsection{Coordinate change for a transformation of dotted triangulations}
\label{subsec:Kashaev_groupoid}

In the present subsection we study the coordinate change maps for the ratio coordinates of $\mathscr{A}_{{\rm SL}_m,\wh{S}}$, associated to each change of dotted triangulations of $\wh{S}$. We describe our result in terms of the groupoid formed by all possible transformations of dotted triangulations. We first define:

\begin{definition}[\cite{Penner2}: Ptolemy groupoid]
\label{def:Ptolemy_groupoid}
Let $\wh{S}$ be a marked hyperbolic surface. Let the {\em Ptolemy groupoid $Pt(\wh{S})$ of $\wh{S}$} be the category whose objects are all possible ideal triangulations of $\wh{S}$, where from any object $\tau$ to any object $\tau'$ there is a unique morphism denoted by $[\tau,\tau']$. The composition rule for morphisms is given by
$$
[\tau',\tau''] \circ [\tau, \tau'] = [\tau, \tau''].
$$
\end{definition}
It is a classical result that the Ptolemy groupoid is generated by `flips' (see Prop.\ref{prop:Whitehead}):
\begin{definition}[e.g. \cite{PennerBook}: flip]
\label{def:flip}
Suppose $e$ is an edge of an ideal triangulation $\tau$ that separates distinct triangles. Then $e$ is one diagonal of an ideal quadrilateral formed by two ideal triangles, and we may replace $e$ by the other diagonal $e'$ of this quadrilateral to produce another ideal triangulation $\tau'$. We say that $\tau'$ arises from $\tau$ by a {\em flip} or a {\em Whitehead move} $W_e$ along $e$. 
\end{definition}

Now we consider the groupoid formed by Kashaev's dotted triangulations.
\begin{definition}[see e.g. \cite{Ki12}: Kashaev groupoid]
\label{def:Kashaev_groupoid}
Let the {\em Kashaev groupoid $\mcal{K}(\wh{S})$ of $\wh{S}$} be the category whose objects are all possible dotted triangulations (Def.\ref{def:dotted_triangulation}) of $\wh{S}$, where from any object $\tau_{\rm dot}$ to any object $\tau'_{\rm dot}$ there is a unique morphism $[\tau_{\rm dot}, \tau'_{\rm dot}]$, with the composition rule for morphisms is given by
$$
[\tau'_{\rm dot},\tau''_{\rm dot}] \circ [\tau_{\rm dot}, \tau'_{\rm dot}] = [\tau_{\rm dot}, \tau''_{\rm dot}].
$$
\end{definition}
The Kashaev groupoid $\mcal{K}(\wh{S})$ admits an easy `presentation' by generators and relations. We start by defining the generating morphisms as follows. In the meantime, we keep in mind that each ideal triangle of an ideal triangulation of $\wh{S}$ inherits an orientation from that of $\wh{S}$, thus there is a counterclockwise ordering of the three vertices of it.

\begin{definition}[see e.g. \cite{Kash00}: elementary moves of the Kashaev groupoid $\mcal{K}(\wh{S})$]
Let $I$ be a fixed index set for the set of all ideal triangles for any ideal triangulation of $\wh{S}$. We define three types of {\em elementary moves} in $\mcal{K}(\wh{S})$. We call a morphism $[(\tau,D,L), \, (\tau',D',L')]$ of $\mcal{K}(\wh{S})$ an {\em elementary morphism} if it falls into one of the following, and in such a case we say that $(\tau',D',L')$ is obtained by {\em applying} the relevant elementary move to $(\tau,D,L)$:
\begin{enumerate}
\item {\em dot change $\til{A}_t$ for $t\in I$}: \\
We have $\tau'=\tau$ and $L'=L$, where $D'$ differs from $D$ only on the ideal triangle labeled by $t$ by the labeling bijection $L$. We move the dot of this triangle to the next one in the counterclockwise sense.

\begin{figure}[htbp!]
\hfill 
\begin{subfigure}[b]{0.40\textwidth}
\hspace{-3mm} \includegraphics[width=50mm]{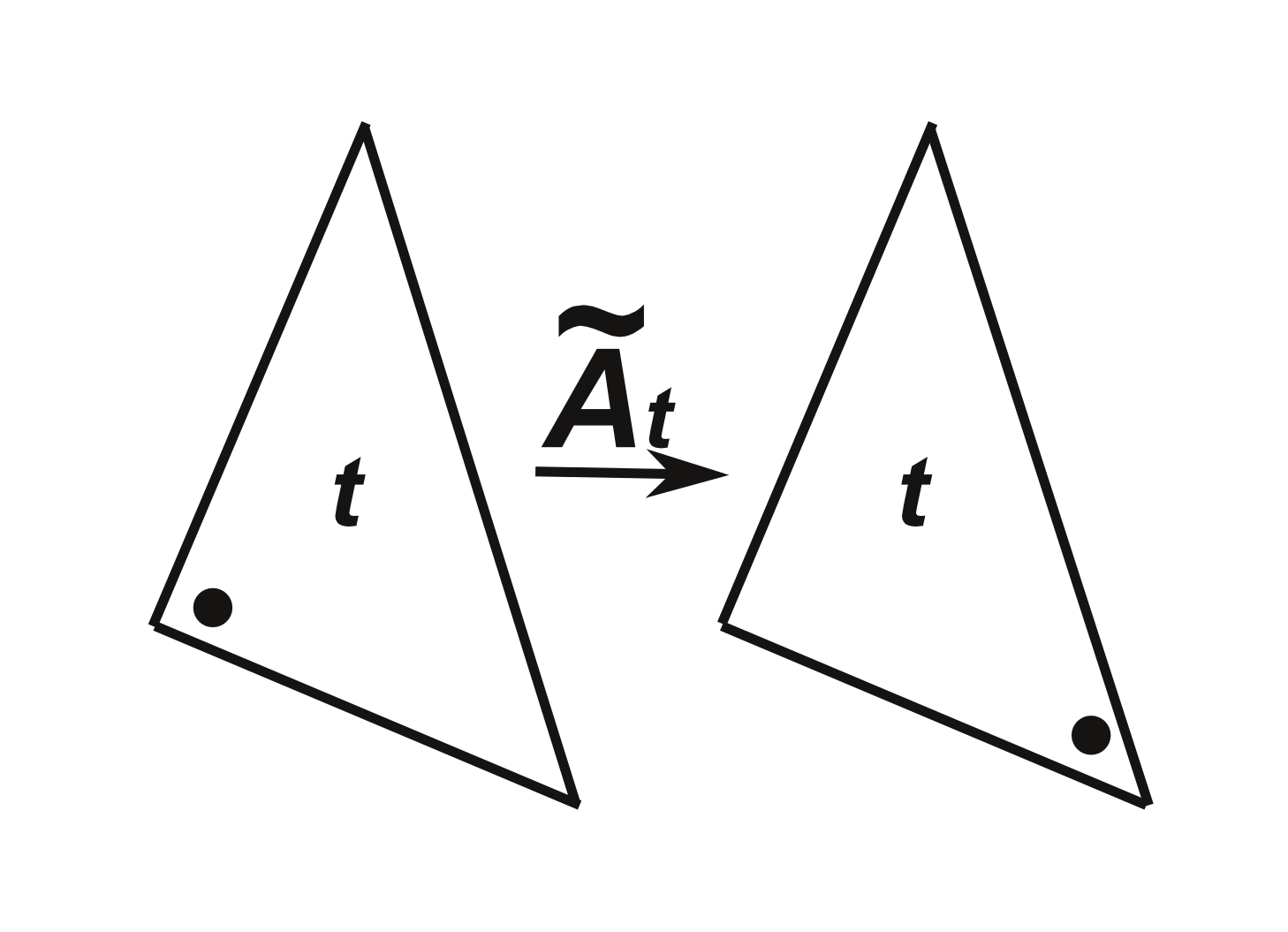}
\caption{The action of $\til{A}_t$}
\label{fig:action_of_til_A_t}
\end{subfigure}
\hfill
\begin{subfigure}[b]{0.40\textwidth}
\includegraphics[width=55mm]{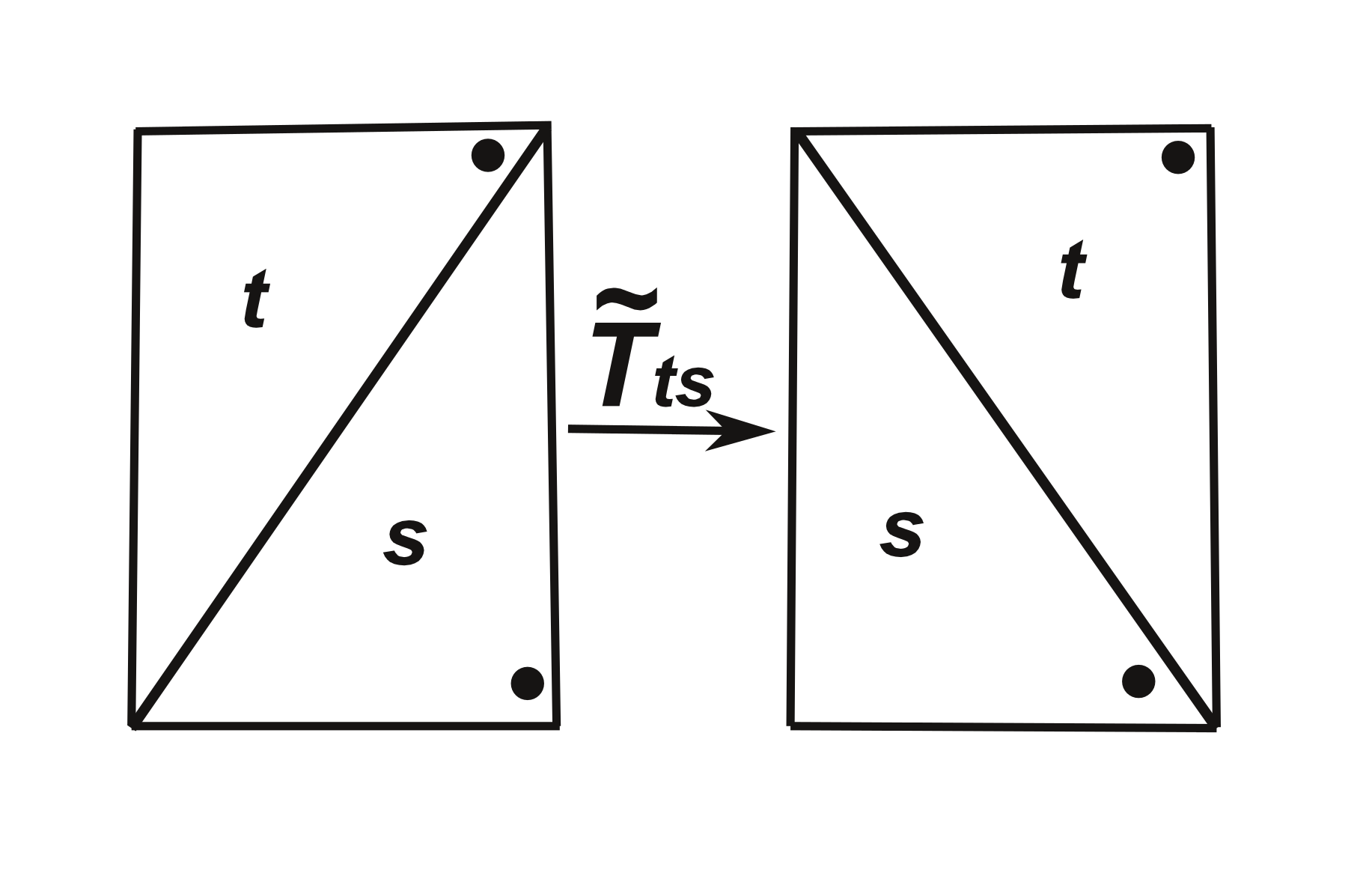}
\caption{The action of $\til{T}_{ts}$}
\label{fig:action_of_til_T_ts}
\end{subfigure}   
\hfill
\vspace{-2mm}
\caption{Some elementary moves of Kashaev groupoid $\mcal{K}(\wh{S})$}
\label{fig:elementary_moves_of_Kashaev_groupoid}
\end{figure}

\item {\em enhanced flip $\til{T}_{ts}$ for $t,s\in I$ :}  \\
This move can be defined only if the two triangles labeled by $t,s$ share a common side $e$, and their dots should be configured exactly as in Fig.\ref{fig:action_of_til_T_ts}, with respect to this side $e$; the dot of $s$ should be at the corner far from $e$, and that of $t$ at the clockwise next corner to the one of $t$ far from $e$. Then $\tau'$ is obtained from $\tau$ by  flipping along $e$ (Def.\ref{def:flip}). The rules $D'$ and $L'$ are defined as in Fig.\ref{fig:action_of_til_T_ts} for the two new ideal triangles resulting from this flip, while they are same as $D$ and $L$ for all other triangles. One can think of this as rotating the diagonal of the quadrilateral formed by these triangles clockwise by 90 degrees, while the dots and triangle labels are `floating'.

\item {\em triangle-label permutation $\til{P}_\sigma$, for a permutation $\sigma$ of $I$ :} \\
We have $\tau' = \tau$, $D' = D$, while $L' = \sigma \circ L$.
\end{enumerate}
\end{definition}
One can think of applying a word $g_\ell \cdots g_2 g_1$ in elementary moves and their inverses to some $(\tau, D,L)$. We read the word from right, that is, first apply $g_1$ and last apply $g_\ell$.
\begin{definition}
\label{def:support}
For a word in elementary moves and their inverses, we say $(\tau,D,L)$ {\em supports} the word if the word can be applied to $(\tau,D,L)$.
\end{definition}

By the requirement that $\mcal{K}(\wh{S})$ has only one morphism from any object to another, the elementary moves satisfy some algebraic relations:
\begin{lemma}[See e.g. \cite{Kash00}, \cite{Ki12}]
\label{lem:relations_of_elementary_moves_of_Kashaev_groupoid}
The elementary moves of $\mcal{K}(\wh{S})$ satisfy the following algebraic relations: for mutually distinct $t,s,u,v \in I$ and for any  permutations $\sigma_1,\sigma_2,\sigma$ of $I$,
\begin{align}
\label{eq:rho_cubed}
{\rm [order\mbox{-}three]} \quad & \til{A}_t^3 = {\rm identity}, \\
{\rm [pentagon]} \quad & \til{T}_{ts} \til{T}_{tu} \til{T}_{su}  =  \til{T}_{su} \til{T}_{ts}, 
\\
{\rm [inversion]} \quad & 
\til{T}_{ts} \til{A}_t \til{T}_{st} = \til{A}_t \til{A}_s \til{P}_{(t \, s)}, 
\\
\label{eq:consistency}
{\rm [consistency]} \quad & \til{A}_t \til{T}_{ts} \til{A}_s = \til{A}_s \til{T}_{st} \til{A}_t, 
\\
{\rm [permutation]} \quad & \til{P}_{\rm id} = {\rm identity}, \quad \til{P}_{\sigma_1} \til{P}_{\sigma_2} = \til{P}_{\sigma_1 \circ \sigma_2}, \\
{\rm [index~change]} \quad & \til{P}_\sigma \til{A}_t = \til{A}_{\sigma(t)} \til{P}_\sigma, \quad
\til{P}_\sigma \til{T}_{ts} = \til{T}_{\sigma(t) \, \sigma(s)} \til{P}_\sigma, 
\\
\label{eq:commutativity}
{\rm [commutativity]} \quad & \til{T}_{ts} \til{T}_{uv} = \til{T}_{uv} \til{T}_{ts}, \quad
\til{T}_{ts} \til{A}_u = \til{A}_u \til{T}_{ts}, \quad
\til{A}_t \til{A}_s = \til{A}_s \til{A}_t,
\end{align}
where each equality means that if $(\tau,D,L)$ supports the left-hand side, then it supports the right-hand side too, and the results of the application of both sides to $(\tau,D,L)$ are the same.
\end{lemma}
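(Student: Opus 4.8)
The plan is to exploit the defining feature of the groupoid $\mcal{K}(\wh{S})$ in Def.~\ref{def:Kashaev_groupoid}: between any two objects there is exactly one morphism. Hence, if two words $w_1,w_2$ in elementary moves and their inverses are both supported (Def.~\ref{def:support}) by a dotted triangulation $(\tau,D,L)$, and the results $w_1.(\tau,D,L)$ and $w_2.(\tau,D,L)$ coincide as dotted triangulations, then $w_1$ and $w_2$ represent one and the same morphism of $\mcal{K}(\wh{S})$, namely $[(\tau,D,L),\, w_1.(\tau,D,L)]$, by the composition rule of Def.~\ref{def:Kashaev_groupoid}. Each asserted relation therefore reduces to two purely combinatorial verifications: a \emph{support check}, that whenever $(\tau,D,L)$ supports the left-hand side it also supports the right-hand side; and an \emph{output check}, that the two sides then produce the same dotted triangulation. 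Since every elementary move alters $(\tau,D,L)$ only within a bounded region --- a single ideal triangle for $\til{A}_t$, a single ideal quadrilateral for $\til{T}_{ts}$, nothing geometric for $\til{P}_\sigma$ --- both verifications are local and can be carried out on the finite sub-configuration of triangles that the relation concerns.

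First I would dispose of the routine relations. The permutation and the index-change relations follow at once from $L'=\sigma\circ L$ together with the manifest equivariance of the definitions of $\til{A}_t$ and $\til{T}_{ts}$ under relabeling. The order-three relation $\til{A}_t^3={\rm identity}$ holds because $\til{A}_t$ advances the distinguished corner of the triangle $L^{-1}(t)$ by one step in the counterclockwise cyclic order on its three corners, so three applications restore $(\tau,D,L)$. The commutativity relations hold because, for distinct indices, the supports of $\til{T}_{ts}$ and $\til{T}_{uv}$ (respectively of $\til{T}_{ts}$ and $\til{A}_u$, and of $\til{A}_t$ and $\til{A}_s$) involve disjoint edges and triangles; the two moves then act on non-interacting parts of the triangulation, so neither disturbs the applicability of the other and the two orders of composition yield the same object.

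The substance of the lemma is in the pentagon, inversion, and consistency relations, where the supports overlap. For the pentagon relation $\til{T}_{ts}\til{T}_{tu}\til{T}_{su}=\til{T}_{su}\til{T}_{ts}$, I would first note that after forgetting the dots and labels it becomes the classical pentagon identity among flips of an ideal pentagon triangulated by two diagonals into three triangles $t,s,u$ (cf.~Prop.~\ref{prop:Whitehead}), so the underlying ideal triangulations produced by the two sides already agree; the task is the dotted refinement. Starting from the dot configuration forced by applicability of $\til{T}_{su}$, then $\til{T}_{tu}$, then $\til{T}_{ts}$ on the left, one must check that each successive enhanced flip has its dot prerequisites met (the ``$s$-dot far from the flipped edge, $t$-dot at the clockwise-next corner'' configuration of Fig.~\ref{fig:action_of_til_T_ts}), that the right-hand word $\til{T}_{su}\til{T}_{ts}$ is likewise supported by the same starting object, and that the dots and labels on the three final triangles coincide. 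This is a finite verification, carried out most transparently by drawing the five triangulations of the pentagon with dots attached and tracking the ``floating'' dots around the five-cycle; I expect this to be the main obstacle of the proof, since it is precisely where the bookkeeping of dot positions under repeated $90^\circ$ rotations of a diagonal is most error-prone. The inversion relation $\til{T}_{ts}\til{A}_t\til{T}_{st}=\til{A}_t\til{A}_s\til{P}_{(t\,s)}$ and the consistency relation $\til{A}_t\til{T}_{ts}\til{A}_s=\til{A}_s\til{T}_{st}\til{A}_t$ are handled by the same method, but are confined to the single ideal quadrilateral formed by two triangles $t,s$ sharing an edge: on each side the shared edge is flipped the same number of times --- twice (so the triangulation is restored) for inversion, once for consistency --- so the underlying triangulations match, and one then checks by inspection of the quadrilateral that the diagonal, the two dots, and the two labels end in identical positions. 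No ingredient is needed beyond the classical pentagon for flips and the uniqueness of morphisms in $\mcal{K}(\wh{S})$.
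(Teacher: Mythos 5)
Your proposal is correct and follows essentially the same approach as the paper, which states only that "the above relations are easily verified by pictures" and omits the details; your reduction to a local support check plus output check on the finitely many triangles involved, and your identification of the pentagon/inversion/consistency relations as the only non-routine cases, is exactly the intended verification.
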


\begin{remark}
\label{rem:Kashaev_notation}
Kashaev uses $\rho_t := \til{A}_t$, $\omega_{ts} :=  \til{A}_t \til{A}_s^{-1} \til{T}_{ts} \til{A}_s \til{A}_t^{-1}$, $P_\sigma := \til{P}_\sigma$ as the elementary moves, so that the relations become $\rho_t^3 = {\rm identity}$, $\omega_{su} \omega_{tu} \omega_{ts}  = \omega_{ts} \omega_{su}$, $\omega_{ts} \rho_s \omega_{st} = \rho_t \rho_s P_{(t \, s)}$, $\rho_s \omega_{ts} \rho_t = \rho_t \omega_{st} \rho_t$, and so on. See \S\ref{subsec:proof_of_completeness}.
\end{remark}

As the above relations are easily verified by pictures, their proof is omitted. What is not so obvious is that the above generators and the relations \eqref{eq:rho_cubed}--\eqref{eq:commutativity} provide a full `presentation' of the groupoid $\mcal{K}(\wh{S})$. 
\begin{proposition}
\label{prop:Kashaev_full_presentation}
Any morphism of $\mcal{K}(\wh{S})$ is composition of finite number of elementary morphisms, and any algebraic relations among elementary moves are consequences of the ones in Lem.\ref{lem:relations_of_elementary_moves_of_Kashaev_groupoid}.
\end{proposition}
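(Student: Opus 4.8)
\emph{The approach.} The plan is to derive the presentation of $\mcal{K}(\wh{S})$ from the classical presentation of the undecorated Ptolemy groupoid $Pt(\wh{S})$, by means of the forgetful functor $\Phi\colon \mcal{K}(\wh{S})\to Pt(\wh{S})$ that discards the dots and the triangle-labels; thus $\Phi(\til A_t)=\Phi(\til P_\sigma)=\mathrm{id}$, while $\Phi(\til T_{ts})$ is the flip along the side shared by the triangles labeled $t$ and $s$. Recall (Prop.\ref{prop:Whitehead}) that $Pt(\wh{S})$ is generated by flips, and — this being classical, a consequence of the simple connectivity of the flip complex — that $Pt(\wh{S})$ is presented by flips subject to the pentagon relations, the commutativity relations for flips at non-interacting edges, and the groupoid cancellations $W_{e'}W_e=\mathrm{id}$. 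This presentation is the only external input.

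\emph{Generation.} Given a morphism $[\tau_{\rm dot},\tau'_{\rm dot}]$ of $\mcal{K}(\wh{S})$, write $[\tau,\tau']$ in $Pt(\wh{S})$ as a composition of flips $W_{e_N}\cdots W_{e_1}$. I lift the flips one at a time: just before the $i$-th flip, the current dotted triangulation carries some dots and labels on the two triangles adjacent to $e_i$; since $\til A_t^{3}=\mathrm{id}$ realizes any of the three dot positions, I apply suitable $\til A_t$'s and one $\til P_\sigma$ to bring this local picture into the configuration required by an enhanced flip $\til T_{ts}$, then apply $\til T_{ts}$. After treating all the $e_i$ we reach a dotted triangulation with underlying triangulation $\tau'$ but possibly the wrong dots and labels; a final word in $\til A_t$'s followed by a $\til P_\sigma$ corrects this. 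Hence every morphism of $\mcal{K}(\wh{S})$ is a composition of elementary morphisms.

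\emph{Completeness of the relations.} It suffices to show that the canonical functor $\Psi$ from the groupoid $\mcal{G}$ abstractly presented by the elementary moves and the relations of Lem.\ref{lem:relations_of_elementary_moves_of_Kashaev_groupoid} to $\mcal{K}(\wh{S})$ is faithful; it is surjective by the previous step. Let $g\in\mcal{G}$ with $\Psi(g)$ an identity of $\mcal{K}(\wh{S})$, represented by a word $w$ in the elementary moves and their inverses. The functor $\til\Phi:=\Phi\circ\Psi\colon\mcal{G}\to Pt(\wh{S})$ sends each $\til A_t$ and $\til P_\sigma$ to an identity and each $\til T_{ts}$ to the corresponding flip; since $\Psi(g)$ is an identity, the flip-word $\til\Phi(w)$ obtained from $w$ by deleting the $\til A$- and $\til P$-letters represents the trivial morphism of $Pt(\wh{S})$, hence by the presentation of $Pt(\wh{S})$ it is reduced to the empty word by finitely many applications of flip-pentagons, flip-commutativities, and $W_{e'}W_e$ cancellations. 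The core of the argument is to lift each such reduction step to a rewriting of $w$ modulo the relations of $\mcal{G}$: one uses the commutativity relations $\til T\til A=\til A\til T$, $\til A\til A=\til A\til A$ and the index-change relations to slide the $\til A_t$'s and $\til P_\sigma$'s past the $\til T$'s and to realign labels, one uses the inversion relation to lift a $W_{e'}W_e$ cancellation, and one uses the pentagon relation together with the inversion and consistency relations to show that every dot-decorated instance of a flip-pentagon — over all arrangements of the dots on the triangles of the underlying ideal pentagon — is a consequence of the single relation in Lem.\ref{lem:relations_of_elementary_moves_of_Kashaev_groupoid}; it is convenient here to pass to Kashaev's symmetrized moves $\omega_{ts}=\til A_t\til A_s^{-1}\til T_{ts}\til A_s\til A_t^{-1}$ of Rem.\ref{rem:Kashaev_notation}, for which the pentagon takes the dot-independent form $\omega_{su}\omega_{tu}\omega_{ts}=\omega_{ts}\omega_{su}$. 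After all the lifted steps, $w$ has been rewritten, modulo the relations of $\mcal{G}$, as a word involving only $\til A_t$'s and $\til P_\sigma$'s that still represents an identity of $\mcal{K}(\wh{S})$.

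\emph{The vertical part, and conclusion.} The subgroupoid of $\mcal{K}(\wh{S})$ of morphisms fixing the underlying ideal triangulation is, over each fixed $\tau$, the transformation groupoid of the set of pairs consisting of a dot in $\mathbb{Z}/3$ for each triangle of $\tau$ and a bijection $I\to\{\text{triangles of }\tau\}$, under relabeling; the relations $\til A_t^{3}=\mathrm{id}$, the permutation relations, the index-change relations, and $\til A_t\til A_s=\til A_s\til A_t$ visibly normalize every word in $\til A_t,\til P_\sigma$ to the form $\til P_\sigma\,\prod_t\til A_t^{k_t}$ with $k_t\in\{0,1,2\}$, and such a normal form represents an identity of $\mcal{K}(\wh{S})$ only if $\sigma=\mathrm{id}$ and all $k_t=0$, in which case it is already the identity of $\mcal{G}$. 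Therefore $g=\mathrm{id}$ in $\mcal{G}$, so $\Psi$ is faithful and the proposition follows. I expect the chief obstacle to lie in the lifting step of the third paragraph: the finite but delicate combinatorial bookkeeping showing that the single canonical pentagon relation, together with the inversion, consistency and index-change relations, accounts for all dot- and label-decorated versions of the classical flip-pentagon; the symmetrized moves $\omega_{ts}$ are the device that keeps this manageable.
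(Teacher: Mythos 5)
Your strategy coincides with the paper's in outline: reduce the presentation of $\mcal{K}(\wh{S})$ to Whitehead's presentation of $Pt(\wh{S})$ via the dot-and-label forgetting map, lift generators and relations, and handle the ``vertical'' part (morphisms over a fixed ideal triangulation) separately. The paper (\S\ref{subsec:proof_of_completeness}) does this by introducing $2$-dimensional CW-complexes $\mcal{C}_{\mcal{K}}(\wh{S})$ and $\mcal{C}_{Pt}(\wh{S})$ whose $2$-cells encode the respective relations, and applying the Bakalov--Kirillov lemma (Lem.\ref{lem:BaKi}) following Funar--Kapoudjian: that lemma reduces simple-connectivity of $\mcal{C}_\mcal{K}(\wh{S})$ to four local conditions, which is a more economical bookkeeping device than your plan of showing directly that every dot- and label-decorated instance of each Whitehead relation is derivable. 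Condition (4) needs the pentagon for only one convenient lift, and condition (3) --- that any two lifts of a given flip-edge can be joined by a contractible square --- is what absorbs the decoration discrepancies. Your ``vertical part'' argument matches condition (2), proved in the paper as Lem.\ref{lem:M_tau_simply-connected} by a nested application of Bakalov--Kirillov.

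The gap in your proposal is exactly the step you flag as the chief obstacle. Checking that the decoration discrepancies between two lifts of a flip-edge are absorbed by the relations in Lem.\ref{lem:relations_of_elementary_moves_of_Kashaev_groupoid} is the content of Lem.\ref{lem:condition3_for_our_situation}, a multi-page case analysis: one first realigns triangle labels using index-change cells, then realigns dots on the non-flipping triangles using $\omega_{ij}\rho_k=\rho_k\omega_{ij}$ cells, and is finally left with four residual cases governed by the dot configurations on the two flipping triangles, each closed by a consistency cell $\rho_j\omega_{ij}\rho_i=\rho_i\omega_{ji}\rho_j$ or an inversion cell $\omega_{ij}\rho_j\omega_{ji}=\rho_i\rho_jP_{(ij)}$. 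You are right that the symmetrized $\omega_{ts}$ are the natural variables --- the paper switches to them at the start of \S\ref{subsec:proof_of_completeness} for exactly this reason --- but to complete your proof you would still need to carry out this case analysis (or a word-rewriting version of it).
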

The above result has been assumed and used in the literature, but a complete proof has never been published. We present a proof of it in \S\ref{subsec:proof_of_completeness}, since it is crucially used in \S\ref{subsec:consistency_condition_for_3} which is the main proof of the present paper.

\vs

As a dotted triangulation induces a quiver with dotted triangles, each elementary morphism of $\mcal{K}(\wh{S})$ induces a change of quivers with dotted triangles, and we express it as a composition of elementary transformations of quivers with dotted triangles. The only nontrivial case is the enhanced flip $\til{T}_{ts}$, and we follow Fock-Goncharov's realization \cite{FG06} of a flip of an ideal triangulation in terms of a sequence of quiver mutations applied on the quiver associated to the $m$-triangulation. Namely, we first apply mutation at all vertices on the common side of the two ideal triangles $t$ and $s$, then at all vertices on the two parallel lines of distance $1$ from the common side, then at those on the lines of distance $2$ from the common side, etc.
\begin{lemma}[elementary move of Kashaev groupoid as sequence of elementary transformations of quivers with dotted triangles]
\label{lem:functor_from_Kashaev_groupoid_to_Qdt}
Suppose that a dotted triangulation $\tau'_{\rm dot} = (\tau',D',L')$ is obtained from $\tau_{\rm dot} = (\tau,D,L)$ by applying an elementary move, and that $I$ is the index set of the ideal triangles of $\tau$ and $\tau'$. Then the corresponding transformation of quivers with dotted triangles $Q_m(\tau_{\rm dot}) \leadsto Q_m(\tau'_{\rm dot})$ is expressed as a composition of elementary transformations of quivers with dotted triangles as follows.
\begin{enumerate}
\item {\em dot change $\til{A}_t$, for $t\in I$ :}
\begin{align}
\label{eq:til_A_t_as_composition}
\til{A}_t \longmapsto P_{\sigma^A_t} \prod_{(r,c) \in \mcal{S}_m} A_{t_{r,c}},
\end{align}
where $\sigma^A_t$ is the permutation acting on the labels $\{ t_{r,c} : (r,c) \in \mcal{S}_m \}$ of the shaded triangles in $t$, associated to the dot change $\til{A}_t$, where $\mcal{S}_m$ is as in \eqref{eq:mcal_S_m}; examples of the action of $\sigma^A_t$ are
$$
t_{1,1} \mapsto t_{m-1,m-1}, ~
t_{2,1} \mapsto t_{m-2,m-2}, ~
\cdots ~,
t_{m-1,1} \mapsto t_{1,1}, ~\cdots~,
t_{m-1,m-1}\mapsto t_{m-1,1}.
$$

\vs

\item {\em enhanced flip $\til{T}_{ts}$, for $t,s\in I$ :}
\begin{align}
\label{eq:til_T_ts_as_composition}
\til{T}_{ts} \longmapsto \prod_{i=1}^{m-1} M_{m-i} C_{m-i-1}
= (M_{m-1} C_{m-2}) (M_{m-2} C_{m-3})\cdots (M_2 C_1) (M_1 C_0),
\end{align}
where
\begin{align}
\left\{ \begin{array}{rl}
C_0 & = \displaystyle {\rm id}, \\
M_1 & = \displaystyle \prod_{j=1}^{m-1} T_{t_{j,j} s_{m-1,j}} = T_{t_{1,1} s_{m-1,1}} T_{t_{2,2} s_{m-1,2}} \cdots T_{t_{m-1,m-1} s_{m-1,m-1}}, \\
C_1 & = \displaystyle \prod_{j=1}^{m-2} F_{s_{m-1,j} t_{j+1,j+1}} = F_{s_{m-1,1} t_{2,2}} F_{s_{m-1,2} t_{3,3}} \cdots F_{s_{m-1,m-2} t_{m-1,m-1} },
\end{array} \right.
\end{align}
and
\begin{align}
\left\{ \begin{array}{rl}
M_\ell & = \displaystyle \left( \prod_{j=1}^{m-\ell} T_{t_{\ell-1+j,j} s_{m-1,j}}\right) \left(\prod_{j=1}^{m-\ell} T_{t_{j+\ell-1,j+\ell-1} s_{m-\ell,j}}\right), \quad \mbox{for} \quad 2\le \ell \le m-1, \\
C_\ell & = \displaystyle \left( \prod_{j=1}^{m-\ell-1} F_{s_{m-1,j} t_{\ell+j,j+1}}\right) \left(\prod_{j=1}^{m-\ell-1} T_{s_{m-\ell,j} t_{j+\ell,j+\ell}}\right), \quad \mbox{for} \quad 2\le \ell \le m-2.
\end{array} \right.
\end{align}

\vs

\item {\em label permutation $\til{P}_\sigma$, for a permutation $\sigma$ of $I$ :}
\begin{align}
\label{eq:til_P_sigma_as_composition}
\til{P}_\sigma \longmapsto \prod_{(r,c) \in \mcal{S}_m} P_{\sigma_{r,c}},
\end{align}
where for each $(r,s) \in \mcal{S}_m$, \, $\sigma_{r,c}$ is the permutation acting on the set $\{ t_{r,c} \, : \, t \in I \}$ by $\sigma_{r,c}(t_{r,c}) = (\sigma(t))_{r,c}$ for all $t\in I$.
\end{enumerate}
Here, each word in the elementary transformations of quivers with dotted triangulation is to be read from the right. The product orders in \eqref{eq:til_A_t_as_composition} and \eqref{eq:til_P_sigma_as_composition} do not matter, as all the factors commute with one another in each product.
\end{lemma}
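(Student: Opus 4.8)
The plan is to verify each of the three cases by a direct combinatorial inspection, working throughout with the coordinate-free data carried by a quiver with dotted triangles: the underlying quiver, the set of shaded triangles, the position of each dot, and the $I$-indexed labels. The label-permutation and dot-change cases are short; the enhanced flip is the substantial one, to be treated by an induction that sweeps through the $m$-triangulated quadrilateral layer by layer.

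For (3): the move $\til{P}_\sigma$ leaves $\tau$ and $D$ untouched and merely relabels ideal triangles by $\sigma$, so $Q_m(\til{P}_\sigma.\tau_{\rm dot})$ has the same underlying quiver, the same shaded triangles, and the same dots as $Q_m(\tau_{\rm dot})$, while the label $t_{r,c}$ of a shaded triangle becomes $(\sigma(t))_{r,c}$. For distinct $(r,c)\in\mcal{S}_m$ the sets $\{t_{r,c}:t\in I\}$ are disjoint, so the permutations $\sigma_{r,c}$ have disjoint supports, the $P_{\sigma_{r,c}}$ commute, and their composite in any order realizes exactly this relabeling, giving \eqref{eq:til_P_sigma_as_composition}. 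For (1): $\til{A}_t$ fixes $\tau$ and $L$ and moves the distinguished corner of $t$ one step counterclockwise, so the underlying quiver of $Q_m$, being that of the $m$-triangulation of $\tau$, is unchanged, and so are the shaded triangles as vertex sets. One checks that the dot of an upside-down shaded triangle in $t$ --- which sits at the corner corresponding, under the orientation-preserving $\tfrac1m$-scaling-and-$180^{\circ}$-rotation identification with the parent, to the distinguished corner of $t$ --- advances by exactly one step along the cyclic order of its own sides when the distinguished corner of $t$ advances one counterclockwise step; this is precisely one application of $A_{t_{r,c}}$, and these commute as they act on distinct shaded triangles. After all the $A_{t_{r,c}}$ the dots are correct but the labels still refer to the old distinguished corner; composing with $P_{\sigma^A_t}$, where $\sigma^A_t$ is the reindexing of $\mcal{S}_m$ forced by the new distinguished corner (acting on representatives as displayed in the statement), restores the labels and yields $Q_m(\til{A}_t.\tau_{\rm dot})$. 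This gives \eqref{eq:til_A_t_as_composition}, and also the claimed order-independence of the product there and of the one in \eqref{eq:til_P_sigma_as_composition}.

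For (2), recall from the discussion preceding the lemma that the flip along the common edge $e$ of $t$ and $s$ is realized, on underlying quivers, by Fock--Goncharov's sequence of mutations of the $m$-triangulation quiver carried out in waves sweeping outward from $e$: wave $0$ mutates at the $m-1$ non-ideal vertices on $e$, and for $1\le d\le m-2$ wave $d$ mutates at the interior vertices of the two lines parallel to $e$ at distance $d$, one in $t$ and one in $s$. I would set up the initial picture --- the quadrilateral $t\cup s$ with its $m$-triangulation, the upside-down shaded triangles $t_{r,c},s_{r,c}$ with $(r,c)\in\mcal{S}_m$, and their dots, which by the hypothesis for $\til{T}_{ts}$ occupy the configuration of Fig.\ref{fig:action_of_til_T_ts} --- and prove by induction on $\ell$ that, after applying $(M_\ell C_{\ell-1})\cdots(M_1 C_0)$, the current quiver with dotted triangles coincides with the one obtained by performing waves $0,\dots,\ell-1$ of the flip, together with a completely explicit configuration of shaded triangles along the ``frontier'' at distance $\ell$ from $e$, several of which are defective and paired across invisible edges in the manner required of a quiver with dotted triangles. (The base case $\ell=0$ is the initial picture; for $m=2$ the whole product collapses to $T_{t_{1,1}s_{1,1}}$, Kashaev's original enhanced flip, a useful sanity check.) The inductive step consists in checking that each $F_{rs}$ in $C_{\ell-1}$ is applicable --- a cyclically oriented quadrilateral formed by two defective triangles with dots as in Def.\ref{def:elementary_transformations_of_qdt}(2) --- and rotates the invisible edge needed to prepare the next layer, and that each $T_{rs}$ in $M_\ell$ and in the corrective part of $C_{\ell-1}$ is applicable --- two incoming and two outgoing distinct arrows at the shared vertex with dots as in Def.\ref{def:elementary_transformations_of_qdt}(1) --- and performs, on the underlying quiver, the quiver mutation of Lem.\ref{lem:quiver_mutation} at the corresponding vertex. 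When $\ell=m-1$ all waves are done, the underlying quiver is the $m$-triangulation quiver of $\tau'$, no shaded triangle remains defective (since $\tau'$ is a genuine ideal triangulation), and a final inspection matches the dots and labels with those induced by $\tau'_{\rm dot}$, so $Q_m(\til{T}_{ts}.\tau_{\rm dot})=Q_m(\tau'_{\rm dot})$, proving \eqref{eq:til_T_ts_as_composition}.

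The main obstacle is precisely this bookkeeping for (2): through the roughly $m^2$ elementary transformations of the product one must keep exact track of the underlying quiver (to confirm the composite is the flip), of which shaded triangles have become defective and how they are paired, and of all dots and $t_{r,c}/s_{r,c}$ labels, while verifying the applicability of each successive $T_{rs}$ or $F_{rs}$. Nothing here is conceptually hard, but it is lengthy; for $m=3$ the finitely many intermediate configurations can simply be drawn --- as is in any case needed in \S\ref{subsec:consistency_condition_for_3} --- and for general $m$ the layer-by-layer induction above is what organizes the argument.
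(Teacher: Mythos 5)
Your proposal follows essentially the same route as the paper's proof: cases (1) and (3) are handled by a short direct inspection, and case (2) is handled by walking through the sequence of elementary transformations wave by wave across the $m$-triangulated quadrilateral---exactly the step-by-step pictorial verification the paper carries out with reference to Fig.\ref{fig:til_T_ts_realization}, ending with ``details are easily verified by pictures.'' Framing the case-(2) walkthrough as an explicit induction on the layer index $\ell$, with an invariant describing the frontier of defective triangles, is a sensible organizational device and supplies a bit more structure than the paper's prose, but it is the same argument.
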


\begin{proof}
In the cases (1) and (3), it is easy to verify that the asserted compositions of elementary transformations of quivers with dotted triangles indeed give the relevant transformations of quivers with dotted triangles $Q_m(\tau_{\rm dot}) \leadsto Q_m(\tau'_{\rm dot})$. In case (2), we apply the sequence of elementary transformations of quivers with dotted triangles as written in the RHS of \eqref{eq:til_T_ts_as_composition} to $Q_m(\tau_{\rm dot})$, step by step. We ignore $C_0$, and $M_1$ is the sequence of mutations at the vertices on the common side of the ideal triangles $t$ and $s$. Using the rule described in Def.\ref{def:elementary_transformations_of_qdt}(1), one can figure out that the quiver with dotted triangle becomes the second one in Fig.\ref{fig:til_T_ts_realization} after applying $M_1$. We see $m-1$ quadrilaterals formed by defective shaded triangles, along the common side of $t$ and $s$; the next step $C_1$ is to apply the invisible flips to all of them. Using the description in Def.\ref{def:elementary_transformations_of_qdt}(2), we see that we now land in the third one in Fig.\ref{fig:til_T_ts_realization}. Then we apply mutations at all the vertices on the `lines at distance $1$' from the common side of $t$ and $s$, which is realized by $M_2$; this puts us in the fourth in Fig.\ref{fig:til_T_ts_realization}, and so on. Details are easily verified by pictures.
\end{proof}

\begin{figure}[htbp!]
\includegraphics[width=160mm]{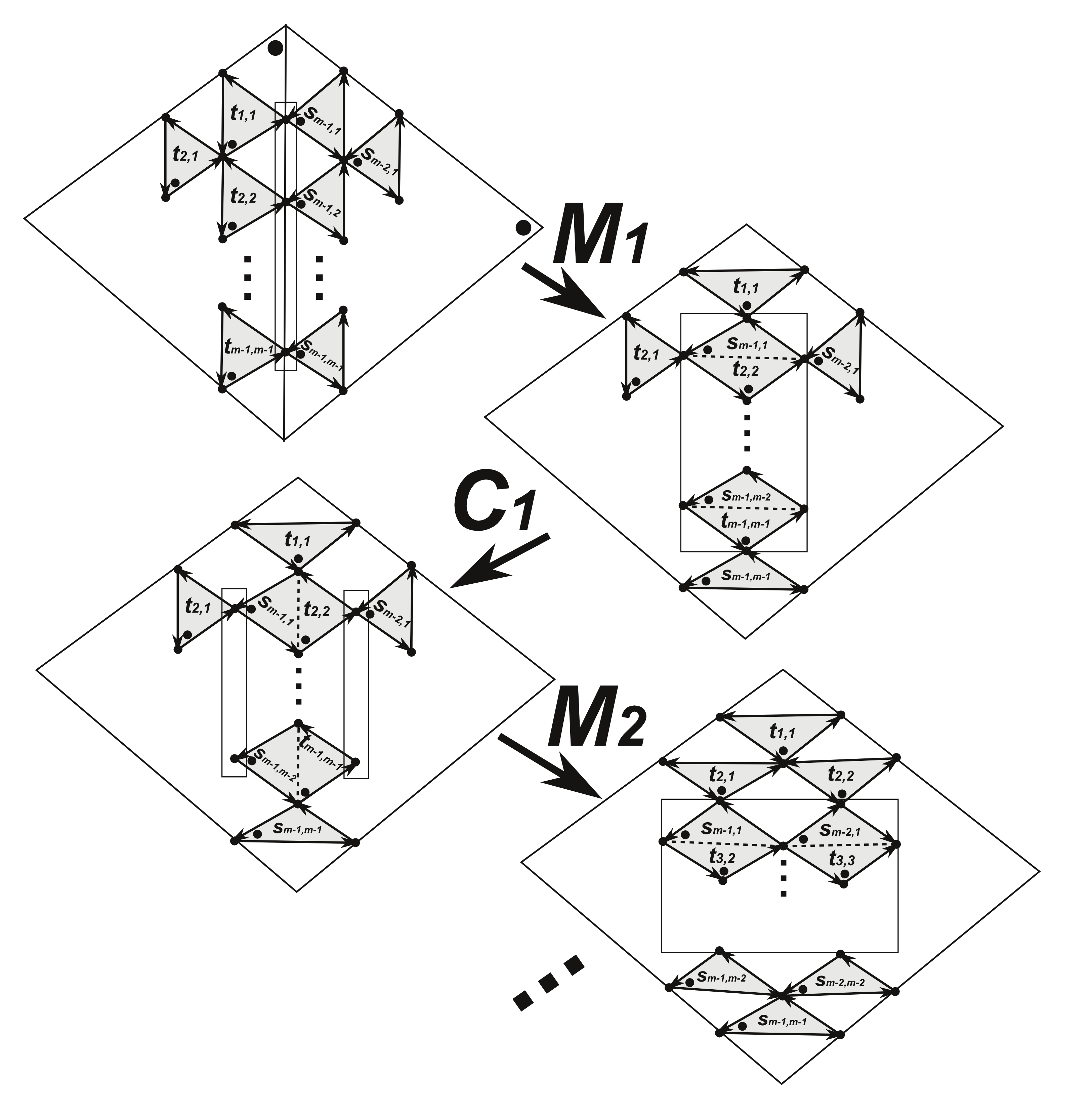}
\vspace{-7mm}
\caption{realization of $\til{T}_{ts}$ using transformations of quivers with dotted triangles}
\label{fig:til_T_ts_realization}
\end{figure}

Let us encode this result in the language of groupoids.
\begin{definition}[groupoid of admissible quivers with dotted triangles]
A quiver with dotted triangles is {\em $(m,\wh{S})$-admissible} if it can be obtained by applying a sequence of finite number of elementary transformations of quivers with dotted triangles to $Q_m(\tau_{\rm dot})$ for some dotted triangulation $\tau_{\rm dot}$ of $\wh{S}$. Let ${\rm Qdt}_m(\wh{S})$ be the groupoid whose objects are all $(m,\wh{S})$-admissible quivers with dotted triangles, whose morphisms are generated by {\em elementary morphisms}, corresponding to elementary transformations of quivers with dotted triangles.
\end{definition}
\begin{remark}
In this way, ${\rm Qdt}_m(\wh{S})$ has too many morphisms, so it is wise to identify some of them, by imposing certain algebraic relations among elementary morphisms. In order to find the `correct' generating set of relations, we must first know the generating set of relations for all the algebraic relations satisfied by the elementary transformations of quivers with dotted triangles. We do not do this here.
\end{remark}

To each dotted triangulation $\tau_{\rm dot}$ we know how to assign a quiver with dotted triangles $Q_m(\tau_{\rm dot})$, and Lem.\ref{lem:functor_from_Kashaev_groupoid_to_Qdt} tells us how to assign to each elementary morphism of $\mcal{K}(\wh{S})$ a morphism of ${\rm Qdt}_m(\wh{S})$. Hence we get a map
\begin{align}
\label{eq:mcal_K_S_to_Qdt}
\mcal{K}(\wh{S}) \to {\rm Qdt}_m(\wh{S}),
\end{align}
defined on objects and on elementary morphisms. In the meantime, for each $\tau_{\rm dot} \in \mcal{K}(\wh{S})$, Def.\ref{def:ratio_coordinates} assigns the higher Kashaev algebra $\mathscr{K}^{Q_m(\tau_{\rm dot})} (F)$ generated by the ratio coordinates $\{Y_{t_{r,c}},Z_{t_{r,c}}\}_{t,r,c}$ and their inverses, over the base field $F$. As mentioned, $Y_{t_{r,c}}$'s and $Z_{t_{r,c}}$'s satisfy some algebraic relations, due to the fact that they are ratios of $\Delta_v$'s (Lem.\ref{lem:linear_algebraic_relations_among_ratio_coordinates}). Now observe that Lem.\ref{lem:coordinate_change_for_elementary_transformation_of_Qdt} assigns to each elementary morphism of ${\rm Qdt}_m(\wh{S})$ from $Q$ to $Q'$  an isomorphism ${\rm Frac}(\mathscr{K}^{Q'}(F)) \to {\rm Frac}(\mathscr{K}^Q(F))$ of fields of fractions of the higher Kashaev algebras; the fact that we are dealing with coordinate functions on an actual geometric space guarantees that this isomorphism is well-defined with respect to the algebraic relations of the generators (i.e. the relations are `preserved'). We thus get a contravariant functor
\begin{align}
\label{eq:Qdt_to_CAlg}
{\rm Qdt}_m(\wh{S}) \to {\rm CAlg}(F),
\end{align}
where ${\rm CAlg}(F)$ is the category of commutative algebras over $F$ whose morphisms are isomorphisms of fields of fractions. By composing \eqref{eq:mcal_K_S_to_Qdt} and \eqref{eq:Qdt_to_CAlg}, we get a map
\begin{align}
\label{eq:Kashaev_groupoid_to_CAlg}
\mcal{K}(\wh{S}) \to {\rm CAlg}(F),
\end{align}
defined on objects and on elementary morphisms; again by geometry, we are guaranteed that the algebraic relations of elementary morphisms of $\mcal{K}(\wh{S})$ are satisfied by their images, hence \eqref{eq:Kashaev_groupoid_to_CAlg} induces a contravariant functor. This functor encodes the data of ratio coordinates on $\mathscr{A}_{G,\wh{S}}$, including the coordinate change maps associated to each change of dotted triangulation of $\wh{S}$. From the formulas in Lem.\ref{lem:coordinate_change_for_elementary_transformation_of_Qdt} we see that each morphism of $\mcal{K}(\wh{S})$ is sent to a positive rational isomorphism between the fraction fields of the coordinate rings, therefore one can think that \eqref{eq:Kashaev_groupoid_to_CAlg} defines an analog of a  `positive space', which we denote by $\mathscr{K}_{m,\wh{S}}$. Let $\mathscr{K}_{m,\wh{S}}^+$ be the $\mathbb{R}_{>0}$-points of this `space'. When $F=\mathbb{C}$, one could also impose a natural $*$-structure on $\mathscr{K}^{Q_m(\tau_{\rm dot})}(\mathbb{C})$, in which all the generators $Y_{t_{r,c}}$ and $Z_{t_{r,c}}$ are $*$-invariant; see \S\ref{subsec:Kashaev_quantization} and Rem.\ref{rem:star_structure}.

\section{New quantization of higher Teichm\"uller space}
\label{sec:new_quantization}

\subsection{Kashaev's quantization of $\mathscr{K}^+_{2,\wh{S}}$}
\label{subsec:Kashaev_quantization}

We first discuss what we mean by the word `quantization' of $\mathscr{K}_{m,\wh{S}}^+$, which is described by the functor \eqref{eq:Kashaev_groupoid_to_CAlg}. For each object $\tau_{\rm dot}$ of $\mcal{K}(\wh{S})$, one obtains a commutative $*$-algebra $\mathscr{K}^{Q_m(\tau_{\rm dot})}(\mathbb{C})$, freely generated over $\mathbb{C}$ by $Y_{t_{r,c}}$ and $Z_{t_{r,c}}$ and their inverses modulo certain algebraic relations. One usually defines quantization in the case when such an algebra has a Poisson bracket structure. In our case we have a certain $2$-form $\Omega_{{\rm SL}_m,\wh{S}}$ \eqref{eq:2-form_diagonal_higher}
on $\mathscr{A}_{{\rm SL}_m,\wh{S}}$, from which we should extract a Poisson bracket structure on $\mathscr{K}^{Q_m(\tau_{\rm dot})}(\mathbb{C})$. Since the $2$-form is diagonal with respect to the logarithmic ratio coordinates, the corresponding Poisson bracket on $\mathscr{K}^{Q_m(\tau_{\rm dot})}(\mathbb{C})$ better be diagonal too. Thus we consider the Poisson bracket $\{ \cdot, \cdot \}_{\tau_{\rm dot}}$ on $\mathscr{K}^{Q_m(\tau_{\rm dot})}(\mathbb{C})$ formally defined as
\begin{align}
\{  q_{t_{r,c}}, p_{t'_{r',c'}}, \}_{\tau_{\rm dot}} = \delta_{t,t'} \delta_{r,r'} \delta_{c,c'} , \qquad
\{ q_{t_{r,c}}, q_{t'_{r',c'}} \}_{\tau_{\rm dot}} = \{ p_{t_{r,c}}, p_{t'_{r',c'}} \}_{\tau_{\rm dot}} = 0,
\end{align}
where $q_{t_{r,c}} = \log Y_{t_{r,c}}$ and $p_{t_{r,c}} = \log Z_{t_{r,c}}$. Using the Leibniz rule one can write this bracket in terms of the ratio coordinates $Y_{t_{r,c}}$ and $Z_{t_{r,c}}$. We generalize this to $(m,\wh{S})$-admissible quivers with dotted triangles:
\begin{definition}
\label{def:Poisson_on_K_Q}
Let $Q$ be an $(m,\wh{S})$-admissible quiver with dotted triangles, and let $I$ be the index set of its shaded triangles. Define the Poisson bracket $\{ \cdot, \cdot\}_Q$ on the algebra $\mathscr{K}^Q(F)$ by
\begin{align}
\{ Y_r, Z_s  \}_Q = \delta_{r,s} \, Y_r Z_s, \qquad
\{ Y_r, Y_s \}_Q = \{ Z_r, Z_s \}_Q = 0, \qquad \forall r,s\in I.
\end{align}
\end{definition}
The following lemma can easily be verified, or one can think of it as a corollary of a similar result for the $A$-coordinates of $\mathscr{A}_{{\rm SL}_m,\wh{S}}$.
\begin{lemma}
Suppose that a morphism of ${\rm Qdt}_m(\wh{S})$ from $Q$ to $Q'$ is induced by an elementary transformation of quivers with dotted triangles. Then the image ${\rm Frac}(\mathscr{K}^{Q'}(F)) \to {\rm Frac}(\mathscr{K}^{Q}(F))$ of this morphism under the functor \eqref{eq:Qdt_to_CAlg} preserves the Poisson structures defined in Def.\ref{def:Poisson_on_K_Q}.
\end{lemma}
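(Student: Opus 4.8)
The plan is to verify the Poisson-bracket invariance directly from the coordinate change formulas in Lemma~\ref{lem:coordinate_change_for_elementary_transformation_of_Qdt}, case by case over the four types of elementary transformations. The assertion is purely local: for each elementary transformation $Q \leadsto Q'$, only finitely many shaded triangles change, and the coordinate functions $Y'_s, Z'_s$ for the unchanged triangles are literally equal to the old $Y_s, Z_s$, so I only need to check that the brackets among the affected new coordinates match the prescribed form, and that the affected new coordinates have vanishing brackets with all the unaffected old ones. Since the bracket $\{\cdot,\cdot\}_Q$ is a derivation in each slot (Leibniz rule) and is log-canonical in the $q_s = \log Y_s$, $p_s = \log Z_s$ variables, the most efficient bookkeeping is to pass to the logarithmic coordinates, where $\{q_r, p_s\} = \delta_{r,s}$ and $\{q_r,q_s\} = \{p_r,p_s\} = 0$, so that the bracket of any two monomials in the $q$'s and $p$'s is computed by the usual symplectic pairing of exponent vectors.

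First I would dispose of the trivial cases: $P_\sigma$ merely relabels, so it obviously preserves the bracket; $A_s$ sends $q'_s = -p_s$ and $p'_s = q_s - p_s$ (from \eqref{eq:A_s_coordinate_change_formula}, taking logs), and one checks $\{q'_s, p'_s\} = \{-p_s,\, q_s - p_s\} = -\{p_s,q_s\} = 1$, while $\{q'_s,q'_s\} = \{p'_s,p'_s\} = 0$ automatically, and nothing else is touched. For $F_{rs}$, using \eqref{eq:F_rs_coordinate_change_formula} in logs one has $q'_r = q_r$, $p'_r = p_r - p_s$, $q'_s = q_s + q_r$, $p'_s = p_s$; the six relevant brackets $\{q'_r,p'_r\}, \{q'_s,p'_s\}, \{q'_r,p'_s\}, \{q'_s,p'_r\}, \{q'_r,q'_s\}, \{p'_r,p'_s\}$ are all computed from the symplectic pairing and come out to the desired $\delta$-pattern, and the cross-brackets with untouched coordinates vanish because $q_r,p_r,q_s,p_s$ already had vanishing brackets with them.

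The substantive case is the mutation $T_{rs}$. Here the logarithmic versions of \eqref{eq:T_rs_coordinate_change_formula} involve logarithms of sums, e.g. $q'_r = -\log(Z_s + Y_r^{-1}Y_s)$, so the naive ``exponent vector'' argument does not literally apply; instead I would verify $\{Y'_u, Z'_v\}_Q = \delta_{u,v} Y'_u Z'_v$ and the vanishing brackets for $u,v \in \{r,s\}$ directly, treating the $\Delta$'s as the primitive variables. The cleanest route is to observe that $T_{rs}$ is the composition of an $A$-coordinate mutation at the single vertex $c$ with relabelings, so the claim follows from the corresponding statement for the $A$-coordinates: the Weil--Petersson $2$-form $\Omega_{{\rm SL}_m,\wh S}$ of \eqref{eq:Omega_SL_m} is mutation-invariant (stated in \S\ref{subsec:coordinae_rings}), hence the Poisson bracket it induces on the $\Delta$'s is preserved, and the $Y,Z$ are fixed rational expressions in the $\Delta$'s whose induced bracket is exactly Def.~\ref{def:Poisson_on_K_Q} by \eqref{eq:2-form_diagonal_higher}. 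So the one-line proof is: the functor \eqref{eq:Qdt_to_CAlg} on a $T_{rs}$-morphism is (pullback along) a single $A$-mutation, which is Poisson by the $A$-space statement, and $\mathscr{K}^Q(F) \subseteq$ the $\Delta$-algebra is a Poisson subalgebra by \eqref{eq:2-form_diagonal_higher}. The main obstacle, and the only place where care is needed, is to make sure the Poisson bracket on the $\Delta$-algebra dual to the \emph{degenerate} $2$-form $\Omega_{{\rm SL}_m,\wh S}$ is well-defined on the subalgebra generated by ratios even though it is not defined on the full $\Delta$-algebra; this is exactly why Def.~\ref{def:Poisson_on_K_Q} is phrased intrinsically on $\mathscr{K}^Q(F)$, and the remark ``one can think of it as a corollary of a similar result for the $A$-coordinates'' in the paper signals that the author intends precisely this reduction. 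I would therefore present the trivial cases by direct log-coordinate computation and the $T_{rs}$ case by this reduction to the known $A$-space invariance.
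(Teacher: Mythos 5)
Your plan matches what the paper intends (the paper disposes of the lemma with ``can easily be verified, or one can think of it as a corollary of a similar result for the $A$-coordinates''): direct verification for the linear transformations and, for $T_{rs}$, either direct computation or a reduction to $A$-mutation invariance. Your log-coordinate checks for $A_s$ and $F_{rs}$ are correct, and a direct check also works for $T_{rs}$, provided it is done by expanding $\{\cdot,\cdot\}_Q$ as a biderivation against the exchange formulas \eqref{eq:T_rs_coordinate_change_formula} --- that is, with $\{\cdot,\cdot\}_Q$ itself as primitive, not ``the $\Delta$'s as the primitive variables.'' The ``cleaner route'' via $\Omega_{{\rm SL}_m,\wh S}$, however, has a genuine gap that you flag but do not repair: because $\Omega_{{\rm SL}_m,\wh S}$ is degenerate there is no Poisson bracket on the $\Delta$-Laurent ring ``dual to'' it, and the naive log-canonical candidate $\{\Delta_i,\Delta_j\} = \varepsilon_{ij}\Delta_i\Delta_j$ does not restrict to $\{\cdot,\cdot\}_Q$ on $\mathscr{K}^Q$; indeed, since a shaded triangle's three sides each carry a single arrow, one gets $\{q_s,p_s\} = \varepsilon_{a_1a_3} - \varepsilon_{a_1a_2} - \varepsilon_{a_2a_3} = \pm 3$ for that bracket rather than the desired $1$. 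The $2$-form identity $\Omega_{{\rm SL}_m,\wh S} = -2\sum dp_s\wedge dq_s$ is genuinely about the \emph{form}, and invariance of a degenerate form under a change of dependent coordinates does not by itself yield invariance of a formally declared bracket; it is the diagonalized exchange formulas, not $\Omega$, that carry the invariance. So your first route --- direct case-checking, which you already set up correctly --- is the one to present, and it is what the paper means by ``easily verified.''
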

So we have a Poisson structure on $\mathscr{K}_{m,\wh{S}}^+$, and we now define the quantization along this Poisson structure:
\begin{definition}
An {\em algebraic quantization of $\mathscr{K}_{m,\wh{S}}^+$} is the family of contravariant functors
$$
a_h : \mcal{K}(\wh{S}) \to *\mbox{-}{\rm Alg}
$$
with a parameter $h$, where $*\mbox{-}{\rm Alg}$ is the category of $*$-algebras over $\mathbb{C}$ whose morphisms are $*$-isomorphisms of skew fields of fractions, such that
\begin{enumerate}
\item The algebra $a_h(\tau_{\rm dot})$ is isomorphic to $\mathbb{C}[[h]] \otimes_\mathbb{C} \mathscr{K}^{Q_m(\tau_{\rm dot})} (\mathbb{C})$ as $\mathbb{C}$-vector spaces,

\vs

\item The product structure $\star$ of the above algebra $a_h(\tau_{\rm dot}) \equiv \mathbb{C}[[h]] \otimes_\mathbb{C} \mathscr{K}^{Q_m(\tau_{\rm dot})} (\mathbb{C})$ satisfies
$$
f \star g = fg + \frac{i}{2} \, h \, \{f, g\}_{\tau_{\rm dot}} + o(h)
$$
for all $f,g\in \mathscr{K}^{Q_m(\tau_{\rm dot})} (\mathbb{C})$, where $i = \sqrt{-1}$.
\end{enumerate}

\vs

We say that an algebraic quantization of $\mathscr{K}_{m,\wh{S}}^+$ is {\em canonical}, if $a_h(\tau_{\rm dot})$ is the free associative $*$-algebra over $\mathbb{C}$ generated by all $\wh{Y}_s$ and $\wh{Z}_s$ and their inverses for $s\in I$ where $I$ is as in Def.\ref{def:Poisson_on_K_Q} for $Q=Q_m(\tau_{\rm dot})$, modulo the relations
\begin{align}
\label{eq:commutation_relations_YZ}
\wh{Y}_r \wh{Z}_s = q^{2 \delta_{r,s}} \, \wh{Z}_s \wh{Y}_r, \qquad \forall r,s\in I, \qquad \mbox{where} \quad q = e^{ih/2},
\end{align}
possibly together with extra relations that go to some of the algebraic relations mentioned in Lem.\ref{lem:linear_algebraic_relations_among_ratio_coordinates} as $h \to 0$, where the $*$-structure is uniquely determined by requiring that all generators are $*$-invariant. We denote the algebras $a_h(\tau_{\rm dot})$ collectively by the symbol $\wh{\mathscr{K}}^{\,\, h}_{m,\wh{S}}$.
\end{definition}

We now give some examples of the extra relations. Formally let
\begin{align}
\label{eq:quantum_log_variables}
\wh{q}_s := \log \wh{Y}_s, \qquad \wh{p}_s := \log \wh{Z}_s, \qquad \forall s\in I,
\end{align}
satisfying the commutation relations
$$
[\wh{q}_r, \wh{p}_s] = i h \, \delta_{r,s},
$$
which imply the relations \eqref{eq:commutation_relations_YZ} by the Baker-Campbell-Hausdorff (BCH) formula:
\begin{align}
\label{eq:BCH}
e^P e^Q = e^{P+Q + \frac{1}{2}[P,Q]}, \quad\mbox{if $[P,Q]$ commutes with $P$ and $Q$}.
\end{align}

\begin{definition}[standard lift of a linear relation]
\label{def:standard_lift}
For each linear relation in Lem.\ref{lem:linear_algebraic_relations_among_ratio_coordinates} satisfied by $p_s$'s and $q_s$'s, it {\em standard lift} is defined by the same equation with all $p_s$'s and $q_s$'s replaced by corresponding $\wh{p}_s$'s and $\wh{q}_s$'s.
\end{definition}
We shall consider the algebraic relations among $\wh{Y}_s = e^{\wh{q}_s}$'s and $\wh{Z}_s = e^{\wh{p}_s}$'s induced via the BCH formula \eqref{eq:BCH} by some of the standard lifts of linear relations of $p_s$'s and $q_s$'s in Lem.\ref{lem:linear_algebraic_relations_among_ratio_coordinates}.
\begin{definition}[identification of quantized algebras]
\label{def:identification_of_quantized_algebras}
Suppose we have an algebraic quantization $a_h : \mcal{K}(\wh{S}) \to *\mbox{-}{\rm Alg}$. Since we use a same index set $I$ for the set of shaded triangles of $Q=Q_m(\tau_{\rm dot})$ for all $\tau_{\rm dot} \in \mcal{K}(\wh{S})$, we get a natural identification of the generators of the algebras $a_h(\tau_{\rm dot})$ for different $\tau_{\rm dot}$'s. If this identification induces well-defined algebra isomorphisms for all $\tau_{\rm dot}$'s, we view $\wh{\mathscr{K}}^{\,\, h}_{m,\wh{S}}$ as a single algebra under this identification.
\end{definition}
\begin{remark}
\label{rem:star_structure}
One can understand the $*$-structure on $\wh{\mathscr{K}}^{\,\, h}_{m,\wh{S}}$ formally as
\begin{align}
\label{eq:star_structure}
\wh{q}_s^{\, *} = \wh{q}_s, \qquad \wh{p}_s^{\, *} = \wh{p}_s, \qquad (\mbox{which implies} \qquad \wh{Y}_s^{\, *} = \wh{Y}_s, \qquad \wh{Z}_s^{\, *} = \wh{Z}_s),
\end{align}
and by conjugation on complex constants, as the linear relations of $\wh{p}_s$ and $\wh{q}_s$ as well as the relations \eqref{eq:commutation_relations_YZ} are invariant under this $*$-structure. This means that in terms of representation theory, we want to represent $\wh{q}_s$ and $\wh{p}_s$ as self-adjoint  operators on a Hilbert space.
\end{remark}

For $m=2$, in his algebraic quantization \cite{Kash98} Kashaev took  just the relations \eqref{eq:commutation_relations_YZ} and no more, for the definition of $a_h(\tau_{\rm dot})$. One can then easily check that Def.\ref{def:identification_of_quantized_algebras} applies. Hence the images of morphisms of $\mcal{K}(\wh{S})$ under $a_h$ are now $*$-algebra {\em automorphisms} of the skew field of fractions of a single $*$-algebra $\wh{\mathscr{K}}^{\,\, h}_{2,\wh{S}}$. These automorphisms  are given by formal conjugation by certain expressions written in terms of $\wh{q}_s$'s and $\wh{p}_s$'s defined in \eqref{eq:quantum_log_variables}. Such an expression is not an element of ${\rm Frac}(\wh{\mathscr{K}}^{\,\, h}_{2,\wh{S}})$. As a matter of fact, as usual in quantum theories, in Kashaev's quantization this $*$-algebra $\wh{\mathscr{K}}^{\,\, h}_{2,\wh{S}}$ is represented as an algebra of operators on some Hilbert space $\mathscr{H}$, where $\wh{q}_s, \wh{p}_s, \wh{Y}_s, \wh{Z}_s$ are represented as self-adjoint operators. Then, each morphism of $\mcal{K}(\wh{S})$ is realized as conjugation by some unitary operator, written in terms of $\wh{q}_s$ and $\wh{p}_s$, using functional calculus. Kashaev's unitary operators yield a family of `projective' functors
$$
\rho_h : \mcal{K}(\wh{S}) \to {\rm Hilb},
$$
where ${\rm Hilb}$ is the category of Hilbert spaces, sending all objects to $\mathscr{H}$; this means that we have a {\em projective} representation of the groupoid $\mcal{K}(\wh{S})$ on ${\rm Hilb}$, namely the following condition
\begin{align}
\label{eq:consistency_condition}
\mbox{consistency:} \quad
\rho_h( [\tau_{\rm dot}',\tau_{\rm dot}''] \circ [\tau_{\rm dot},\tau_{\rm dot}'] ) = c_{\tau_{\rm dot},\tau_{\rm dot}',\tau_{\rm dot}''} \, \rho_h([\tau_{\rm dot}',\tau_{\rm dot}'']) \, \rho_h ([\tau_{\rm dot},\tau_{\rm dot}']), \quad
\end{align}
holds for some constants $c_{\tau_{\rm dot},\tau_{\rm dot}',\tau_{\rm dot}''} \in {\rm U}(1)$. To describe Kashaev's result, we recall the definition of a special function called {\em quantum dilogarithm}:
\begin{definition}[\cite{FK},\cite{F},\cite{B}: quantum dilogarithm function]
Let $b>0$, $b^2 \notin \mathbb{Q}$. Let the function $\Psi_b(z)$ on the complex plane be defined by 
\begin{align}
\label{eq:quantum_dilogarithm}
\Psi_b (z) = \exp\left(\frac{1}{4} \int_{\Omega_0} \frac{ e^{-2izw}}{ \sinh (wb) \sinh(w/b)} \frac{dw}{w} \right)
\end{align}
first in the strip $|{\rm Im} \, z| < (b+b^{-1})/2$, where $\Omega_0$ means the real line contour with a detour around $0$ (origin) along a small half circle above the real line, and analytically continued to a meromorphic function on the complex plane using the following two functional equations:
\begin{align}
\nonumber
\Psi_b(z - ib^{\pm 1}/2) = (1+e^{2\pi b^{\pm 1} z}) \Psi_b(z+ ib^{\pm 1}/2).
\end{align}
\end{definition}
Here $b$ is related to our $h$ by $h = 2 \pi b^2$, and to the usual quantum parameter $q$ by $q = e^{\pi i b^2}$.

\vs

When $m=2$, each ideal triangle of an ideal triangulation of $\wh{S}$ has only one shaded triangle, so we just denote the shaded triangle $t_{1,1}$ by $t$. Let $\{1,2,\ldots,N\}$ be the index set for the set of all ideal triangles of an ideal triangulation of $\wh{S}$. We observe also that there is no invisible flip. Now Kashaev's quantization of $\mathscr{K}_{2,\wh{S}}^+$ can be described as follows. The $*$-algebra $\wh{\mathscr{K}}^{\,\, h}_{2,\wh{S}}$ is represented as
\begin{align}
\label{eq:star_representation}
\left\{ {\renewcommand{\arraystretch}{1.4}
\begin{array}{l}
\wh{q}_s = 2\pi b \, Q_s, \quad \wh{p}_s = 2\pi b P_s, \, \quad \wh{Y}_s = e^{\wh{q}_s}, \quad \wh{Z}_s = e^{\wh{p}_s}, \quad \mbox{for}\quad s=1,\ldots,N, \quad \mbox{where} \\
\displaystyle Q_s = x_s, \quad P_s = \frac{1}{2\pi i} \frac{\partial}{\partial x_s} \quad \mbox{on} \quad
\mathscr{H} = L^2(\mathbb{R}^N, \wedge_{s=1}^N dx_s).
\end{array} } \right.
\end{align}
\begin{remark}
These $\wh{q}_s, \wh{p}_s$ form a different, but unitarily equivalent, representation from \eqref{eq:canonical_quantization}.
\end{remark}
The operators $\wh{q}_s$, $\wh{p}_s$, $\wh{Y}_s$, $\wh{Z}_s$ are symmetric with respect to the usual inner product of $\mathscr{H}$, and is defined only on some dense subspace. For example, they act on the following subspace
$$
W_s := {\rm span}_\mathbb{C} \{ P(x_s) e^{-B x_s^2 + Cx_s} : \mbox{$P$ is a polynomial, $B>0$, $C\in \mathbb{C}$} \} \subset L^2(\mathbb{R}, dx_s),
$$
and it is actually sufficient to consider this subspace \cite{G}. So the algebra $\wh{\mathscr{K}}^{\,\, h}_{2,\wh{S}}$ can be thought of as acting on the space $\bigotimes_{s=1}^N W_s$. Kashaev \cite{Kash98} \cite{Kash00} then represented the elementary morphisms of $\mcal{K}(\wh{S})$ defined in Def.\ref{def:elementary_transformations_of_qdt} as the following unitary operators: 
\begin{align}
\label{eq:Kashaev_operators}
\left\{ {\renewcommand{\arraystretch}{1.2}
\begin{array}{rl}
\rho_h( \til{A}_s ) = & {\bf A}_s := e^{-\pi i/3} e^{3\pi i Q_s^2} e^{\pi i (P_s + Q_s)^2}, \\
\rho_h (\til{T}_{rs} ) = & {\bf T}_{rs} := e^{2\pi i P_r Q_s} \Psi_b( Q_r + P_s - Q_s )^{-1}, \\
(\rho_h (\til{P}_\sigma) f) ( \{x_s\}_{s=1}^N ) = & ({\bf P}_\sigma f)(\{x_s\}_{s=1}^N) := f( \{x_{\sigma(s)} \}_{s=1}^N ).
\end{array} } \right.
\end{align}
A friendly concrete description of these operators is available in \cite{Ki12}. The conjugation action by these operators are:
\begin{lemma}
\label{lem:conjugation_action}
One has
\begin{align*}
& {\bf A}_s^{-1} \wh{Y}_s {\bf A}_s = \wh{Z}_s^{-1}, \qquad
{\bf A}_s^{-1} \wh{Z}_s {\bf A}_s = q \wh{Y}_s \wh{Z}_s^{-1}, \qquad \mbox{where $q = e^{\pi i b^2}$}, \\
& {\bf T}_{rs}^{-1} \wh{Y}_r {\bf T}_{rs} = (\wh{Z}_s + \wh{Y}_r^{-1} \wh{Y}_s)^{-1}, \qquad {\bf T}_{rs}^{-1} \wh{Z}_r {\bf T}_{rs} = ( q^2 \wh{Y}_r \wh{Z}_r^{-1} \wh{Y}_s^{-1} \wh{Z}_s + \wh{Z}_r^{-1} )^{-1}\\
& {\bf T}_{rs}^{-1} \wh{Y}_s {\bf T}_{rs} = \wh{Y}_r \wh{Z}_s + \wh{Y}_s, \qquad
{\bf T}_{rs}^{-1} \wh{Z}_s {\bf T}_{rs} = \wh{Z}_r \wh{Z}_s, \\
& {\bf P}_\sigma^{-1} \wh{Y}_s {\bf P}_\sigma = \wh{Y}_{\sigma^{-1}(s)}, \quad {\bf P}_\sigma^{-1} \wh{Z}_s {\bf P}_\sigma = \wh{Z}_{\sigma^{-1}(s)}
\end{align*}
\end{lemma}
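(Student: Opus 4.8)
The plan is to reduce the whole computation to two kinds of elementary steps: conjugation by the \emph{quadratic} unitaries $e^{3\pi iQ_s^2}$, $e^{\pi i(P_s+Q_s)^2}$, $e^{2\pi iP_rQ_s}$, which act \emph{affinely} on the Heisenberg generators $Q_s,P_s$ of \eqref{eq:star_representation}; and conjugation by the quantum dilogarithm $\Psi_b(Q_r+P_s-Q_s)$, which is governed by the two functional equations of $\Psi_b$. Throughout I would use the canonical relation $[Q_r,P_s]=\tfrac{i}{2\pi}\delta_{r,s}$ coming from \eqref{eq:star_representation} and the BCH identity \eqref{eq:BCH} to translate an affine action on $\wh q_s=\log\wh Y_s$, $\wh p_s=\log\wh Z_s$ into a monomial-times-scalar action on $\wh Y_s=e^{\wh q_s}$, $\wh Z_s=e^{\wh p_s}$; each isolated power of $q=e^{\pi ib^2}$ appearing in the statement arises from exactly one such reordering.

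For the quadratic pieces I would evaluate $e^{-A}Be^{A}$ by the Hadamard series $B-[A,B]+\tfrac12[A,[A,B]]-\cdots$, which here terminates because $[Q_s^2,P_s]$, $[(P_s+Q_s)^2,Q_s]$, $[(P_s+Q_s)^2,P_s]$ are again linear in the generators and their further commutators are central. Explicitly, $e^{-3\pi iQ_s^2}(\,\cdot\,)e^{3\pi iQ_s^2}$ fixes $Q_s$ and sends $P_s\mapsto P_s+3Q_s$, while $e^{-\pi i(P_s+Q_s)^2}(\,\cdot\,)e^{\pi i(P_s+Q_s)^2}$ fixes $P_s+Q_s$ and sends $Q_s\mapsto -P_s$, $P_s\mapsto 2P_s+Q_s$; composing the two gives the order-three linear map $\wh q_s\mapsto-\wh p_s$, $\wh p_s\mapsto\wh q_s-\wh p_s$, and exponentiating via \eqref{eq:BCH} yields ${\bf A}_s^{-1}\wh Y_s{\bf A}_s=\wh Z_s^{-1}$ and ${\bf A}_s^{-1}\wh Z_s{\bf A}_s=q\,\wh Y_s\wh Z_s^{-1}$. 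The scalar $e^{-\pi i/3}$ plays no role in conjugation; it is there so that ${\bf A}_s^3=1$ on the nose, and the fact that the above linear map has order three is a reassuring check against the relation $\til A_s^3={\rm identity}$ of Lem.\ref{lem:relations_of_elementary_moves_of_Kashaev_groupoid}. Similarly $e^{-2\pi iP_rQ_s}(\,\cdot\,)e^{2\pi iP_rQ_s}$ fixes $Q_s$ and $P_r$ and sends $Q_r\mapsto Q_r-Q_s$, $P_s\mapsto P_s+P_r$, so it sends $\wh Y_r\mapsto\wh Y_r\wh Y_s^{-1}$, $\wh Z_s\mapsto\wh Z_r\wh Z_s$ and fixes $\wh Z_r$.

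For the $\Psi_b$-step write $\wh u:=Q_r+P_s-Q_s$. One checks that $\wh u$ commutes with $Q_r$ and with $P_r+P_s$, whereas conjugating $\wh u$ by $e^{\pm2\pi bQ_s}$ or by $e^{\pm2\pi bP_r}$ shifts it by $\mp ib$; hence $\Psi_b(\wh u)$ commutes with $\wh Y_r=e^{2\pi bQ_r}$ and with $\wh Z_r\wh Z_s=e^{2\pi b(P_r+P_s)}$, while conjugating $\wh Y_s^{\pm1}$ or $\wh Z_r^{\pm1}$ by $\Psi_b(\wh u)$ replaces $\Psi_b(\wh u)$ by $\Psi_b(\wh u\pm ib)$. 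The functional equations $\Psi_b(z\mp ib^{\pm1}/2)=(1+e^{2\pi b^{\pm1}z})\Psi_b(z\pm ib^{\pm1}/2)$ then give $\Psi_b(\wh u+ib)\Psi_b(\wh u)^{-1}=(1+q\,e^{2\pi b\wh u})^{-1}$ and $\Psi_b(\wh u-ib)\Psi_b(\wh u)^{-1}=1+q^{-1}e^{2\pi b\wh u}$, all of this to be justified on the dense domain $\bigotimes_{s=1}^N W_s$ on which the relevant operators are defined, cf. \cite{G}. Composing with the affine step above, and using $e^{2\pi b\wh u}=q^{-1}\wh Y_r\wh Z_s\wh Y_s^{-1}$ together with the Weyl relation $\wh Z_s\wh Y_s^{-1}=q^2\wh Y_s^{-1}\wh Z_s$ from \eqref{eq:commutation_relations_YZ}, one obtains the four formulas for ${\bf T}_{rs}$; in particular the $q^2$ in the $\wh Z_r$-line is precisely that last reordering constant. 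Finally ${\bf P}_\sigma$ merely relabels the variables $x_s$, so ${\bf P}_\sigma^{-1}\wh Y_s{\bf P}_\sigma=\wh Y_{\sigma^{-1}(s)}$ and ${\bf P}_\sigma^{-1}\wh Z_s{\bf P}_\sigma=\wh Z_{\sigma^{-1}(s)}$ follow at once from \eqref{eq:Kashaev_operators}.

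The only genuinely analytic part is the $\Psi_b$-step: promoting the scalar functional equations of the quantum dilogarithm to operator identities requires a common dense invariant domain (the spaces $W_s$ suffice, by \cite{G}) and a careful treatment of the unbounded operators $\wh u$, $\wh Y_s$, $\wh Z_r$ and of the analytic continuation of $\Psi_b$; this is the step I expect to take the most care. Everything else is bookkeeping of signs and of powers of $q$ against the normalizations $\wh q_s=2\pi bQ_s$, $\wh p_s=2\pi bP_s$ of \eqref{eq:star_representation}, which is routine but unforgiving. Alternatively the entire lemma can be quoted from Kashaev \cite{Kash98}, \cite{Kash00}, with the concrete operator conventions spelled out in \cite{Ki12}.
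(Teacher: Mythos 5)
Your proposal is correct and follows essentially the same route as the paper's proof: compute the affine conjugation action of the quadratic exponentials on $Q_s,P_s$ (the paper cites \cite[Prop.~4.18]{FrKi} for \eqref{eq:A_commutation} but remarks that it follows directly from \eqref{eq:basic_relations_PQ}, which is exactly your Hadamard-series computation), transport this to $\wh Y_s,\wh Z_s$ via the BCH formula, and for ${\bf T}_{rs}$ combine \eqref{eq:basic_commutation} with the functional equation of $\Psi_b$ as in \cite[Prop.~5.3]{FrKi}. Your explicit identification $e^{2\pi b\wh u}=q^{-1}\wh Y_r\wh Z_s\wh Y_s^{-1}$ and the observation that $\wh u$ commutes with $Q_r$ and $P_r+P_s$ reproduce the paper's computation of the four ${\bf T}_{rs}$-formulas, and your caveat about domain issues for the unbounded operators matches the paper's remark that the argument is formal with rigor deferred to \cite{G} and \cite{FG09}.
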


\begin{proof}
The last line can easily be seen. For the first line, we use
\begin{align}
\label{eq:A_commutation}
{\bf A}_s^{-1} P_s {\bf A}_s = Q_s - P_s, \qquad {\bf A}_s^{-1} Q_s {\bf A}_s = - P_s,
\end{align}
for a proof of which we refer the readers to \cite[Prop.4.18]{FrKi}. Or one can formally prove it directly, using the basic relations
\begin{align}
\label{eq:basic_relations_PQ}
[P_r, Q_s] =  \frac{\delta_{rs}}{2\pi i}, \quad [P_r, P_s] = [Q_r,Q_s]=0.
\end{align}
We then use the BCH formula \eqref{eq:BCH} to get the first line from \eqref{eq:A_commutation}. 

\vs

Following the proof in \cite[Prop.5.3]{FrKi}, for the second line we use
\begin{align}
\label{eq:basic_commutation}
e^{\ell P} f(Q) e^{-\ell P} = f ( Q + \ell [P,Q] ),
\end{align}
which holds for all $\ell \in \mathbb{R}$, any real-analytic function $f$ on $\mathbb{R}$, and self-adjoint operators $P,Q$ on a Hilbert space such that $[P,Q]$ `commute' with $P$ and $Q$; this makes sense only on a suitable dense subspace of the Hilbert space, such as $W_s$ mentioned above. Thus
\begin{align*}
{\bf T}_{rs}^{-1} \wh{Y}_r {\bf T}_{rs} & = \Psi_b( Q_r + P_s - Q_s ) e^{-2\pi i P_r Q_s}  e^{2\pi b Q_r}  e^{2\pi i P_r Q_s} \Psi_b( Q_r + P_s - Q_s )^{-1}  \\
& \stackrel{\eqref{eq:basic_commutation}}{=} \Psi_b( Q_r + P_s - Q_s )  e^{2\pi b (Q_r-Q_s)}  \Psi_b( Q_r + P_s - Q_s )^{-1} \\
& \stackrel{\eqref{eq:basic_commutation}}{=} \Psi_b( Q_r + P_s - Q_s )    \Psi_b( Q_r + P_s - Q_s - ib)^{-1} e^{2\pi b (Q_r-Q_s)}.
\end{align*}
We then use the functional equation
$$
\Psi_b(w) \Psi_b(w-ib)^{-1} = (1+e^{-\pi i b^2} e^{2\pi bw})^{-1},
$$
to get
\begin{align*}
{\bf T}_{rs}^{-1} \wh{Y}_r {\bf T}_{rs} & = ( 1 + e^{-\pi i b^2} e^{2\pi b (Q_r + P_s - Q_s )})^{-1} e^{2\pi b Q_r} e^{-2\pi b Q_s} \\
& = ( e^{2\pi b Q_s} e^{-2\pi b Q_r} (1 + e^{-\pi i b^2} e^{2\pi b (Q_r + P_s - Q_s )}) )^{-1}  \\
& = ( e^{2\pi b Q_s}  e^{-2\pi b Q_r} + e^{-\pi i b^2} \ul{ e^{2\pi b Q_s}  e^{2\pi b (P_s - Q_s )} } )^{-1}.
\end{align*}
Applying the BCH formula \eqref{eq:BCH} to the underlined part, we get ${\bf T}_{rs}^{-1} \wh{Y}_r {\bf T}_{rs} = (e^{-2\pi b Q_r} e^{2\pi b Q_s}  + e^{2\pi b P_s})^{-1} = (\wh{Z}_s + \wh{Y}_r^{-1} \wh{Y}_s)^{-1}$ as desired. Similar proof goes for ${\bf T}_{rs}^{-1} \wh{Z}_r {\bf T}_{rs}$ and ${\bf T}_{rs}^{-1} \wh{Y}_s {\bf T}_{rs}$, so we omit them. Finally,
\begin{align*}
{\bf T}_{rs}^{-1} \wh{Z}_s {\bf T}_{rs} & = \Psi_b( Q_r + P_s - Q_s ) e^{-2\pi i P_r Q_s}  e^{2\pi b P_s}  e^{2\pi i P_r Q_s} \Psi_b( Q_r + P_s - Q_s )^{-1}  \\
& \stackrel{\eqref{eq:basic_commutation}}{=} \Psi_b( Q_r + P_s - Q_s )  e^{2\pi b (P_s+P_r)} \Psi_b( Q_r + P_s - Q_s )^{-1} 
= e^{2\pi b (P_s+P_r)},
\end{align*}
yielding the desired result for the remaining case.
\end{proof}
One may take the above proof only as a formal argument, and consult Kashaev's original works or \cite{G} \cite{FG09} for more rigorous treatment. In the present paper we will be dealing only with formal algebraic expressions instead of actual operators on Hilbert spaces, so such an argument suffices for our purposes.

\vs

As the right-hand-sides of the identities in Lem.\ref{lem:conjugation_action} go to the relevant formulas in Lem.\ref{lem:coordinate_change_for_elementary_transformation_of_Qdt} as $b\to 0$, we see that these operators of Kashaev indeed are quantum versions of the coordinate change formulas. Moreover, these operators satisfy the consistency condition \eqref{eq:consistency_condition} too; it means that the operators ${\bf A}_\cdot$, ${\bf T}_{\cdot, \cdot}$, ${\bf P}_\cdot$ satisfy all the relations in Lem.\ref{lem:relations_of_elementary_moves_of_Kashaev_groupoid}, possibly up to multiplicative constants. The major part of it can be written as follows:

\begin{proposition}[\cite{Kash00}]
\label{prop:lifted_Kashaev_relations}
The unitary operators ${\bf A}_{\cdot}$, ${\bf T}_{\cdot,\cdot}$, ${\bf P}_\cdot$ in \eqref{eq:Kashaev_operators} satisfy
\begin{align}
\label{eq:lifted_Kashaev_rel}
{\bf T}_{rs} {\bf A}_{r} {\bf T}_{sr} & = \zeta \, {\bf A}_{r} {\bf A}_{s} {\bf P}_{(rs)}, \quad \mbox{where} \quad
\zeta = e^{-\pi i (b+b^{-1})^2/12},
\end{align}
and strictly satisfy all other relations of $\til{A}_{\cdot}$, $\til{T}_{\cdot,\cdot}$, $\til{P}_\cdot$ mentioned in Lem.\ref{lem:relations_of_elementary_moves_of_Kashaev_groupoid}:
\begin{align}
\label{eq:lifted_Kashaev_relations_major}
& {\bf A}_{s}^3 = {\rm id}, \quad
{\bf T}_{st} {\bf T}_{rs} = {\bf T}_{rs} {\bf T}_{rt} {\bf T}_{st}, \quad
{\bf A}_{r} {\bf T}_{rs} {\bf A}_{s} = {\bf A}_{s} {\bf T}_{sr} {\bf A}_{r}, \\
\label{eq:lifted_Kashaev_relations_P}
& {\bf P}_{\rm id} = {\rm id}, \quad {\bf P}_{\gamma_1} {\bf P}_{\gamma_2} = {\bf P}_{\gamma_1 \circ \gamma_2}, \quad {\bf P}_\gamma {\bf A}_{s} = {\bf A}_{\gamma(s)} {\bf P}_\gamma, \quad
{\bf P}_\gamma {\bf T}_{rs} = {\bf T}_{\gamma(r) \, \gamma(s)} {\bf P}_\gamma, 
\\
\label{eq:lifted_Kashaev_relations_commutation}
& {\bf T}_{rs} {\bf T}_{tu} = {\bf T}_{tu} {\bf T}_{rs}, \quad
{\bf T}_{rs} {\bf A}_{t} = {\bf A}_{t} {\bf T}_{rs}, \quad
{\bf A}_{r} {\bf A}_{s} = {\bf A}_{s} {\bf A}_{r}, 
\end{align}
for mutually distinct $r,s,t,u \in \{1,2,\ldots,N\}$ and any permutations $\gamma,\gamma_1,\gamma_2$ of $\{1,2,\ldots,N\}$.
\end{proposition}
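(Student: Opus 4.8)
The plan is to establish the relations in Proposition~\ref{prop:lifted_Kashaev_relations} by sorting them into three groups: those that follow from disjointness of variables and a relabelling, those that reduce to a single one-variable metaplectic computation, and the two genuinely hard ones (pentagon and inversion) that reduce to the pentagon and reflection identities for the quantum dilogarithm $\Psi_b$ of \eqref{eq:quantum_dilogarithm}; the constant $\zeta$ in \eqref{eq:lifted_Kashaev_rel} will come out precisely as the constant in the reflection identity. Throughout I would manipulate the operators formally, as in the proof of Lem.\ref{lem:conjugation_action}, using \eqref{eq:BCH}, \eqref{eq:basic_commutation}, \eqref{eq:basic_relations_PQ} and the functional equations of $\Psi_b$, so that all identities become identities of formal expressions on a suitable dense domain such as $\bigotimes_s W_s$.

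First the easy relations. Whenever the index sets are disjoint, ${\bf A}_s$, ${\bf T}_{rs}$, ${\bf T}_{tu}$ involve commuting pairs $Q_\bullet,P_\bullet$ by \eqref{eq:basic_relations_PQ}, so \eqref{eq:lifted_Kashaev_relations_commutation} is immediate. The relations \eqref{eq:lifted_Kashaev_relations_P} are a direct check from \eqref{eq:Kashaev_operators}: ${\bf P}_\gamma$ merely permutes the coordinates $x_s$, hence conjugates $Q_s,P_s$ to $Q_{\gamma^{-1}(s)},P_{\gamma^{-1}(s)}$, which relabels ${\bf A}_\bullet$ and ${\bf T}_{\bullet\bullet}$ as stated. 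For ${\bf A}_s^3={\rm id}$: by \eqref{eq:A_commutation}, conjugation by ${\bf A}_s$ acts on the pair $(Q_s,P_s)$ through the order-three integer matrix $\bigl(\begin{smallmatrix} 0 & -1 \\ 1 & -1 \end{smallmatrix}\bigr)$, so ${\bf A}_s^3$ commutes with $Q_s$ and $P_s$, hence with the whole Heisenberg algebra, hence is a scalar in ${\rm U}(1)$ by irreducibility of the representation \eqref{eq:star_representation} (Stone--von Neumann); the prefactor $e^{-\pi i/3}$ in \eqref{eq:Kashaev_operators} is exactly the one making this scalar equal to $1$, which I would verify by evaluating ${\bf A}_s^3$ on a Gaussian vector in $W_s$, or by the standard metaplectic phase bookkeeping for the product of the three quadratic exponentials.

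Next the hard part. Conjugating the Weil (Gaussian) factor $e^{2\pi i P_r Q_s}$ through the $\Psi_b$-factors in ${\bf T}_{st}{\bf T}_{rs}$ and ${\bf T}_{rs}{\bf T}_{rt}{\bf T}_{st}$ by means of \eqref{eq:basic_commutation}, and cancelling the resulting Weil operators via \eqref{eq:BCH}, I expect the pentagon relation ${\bf T}_{st}{\bf T}_{rs}={\bf T}_{rs}{\bf T}_{rt}{\bf T}_{st}$ to collapse to the operator pentagon identity $\Psi_b(\mathsf{a})\Psi_b(\mathsf{b})=\Psi_b(\mathsf{b})\Psi_b(\mathsf{a}+\mathsf{b})\Psi_b(\mathsf{a})$ for two self-adjoint combinations $\mathsf{a},\mathsf{b}$ of the $Q$'s and $P$'s with $[\mathsf{a},\mathsf{b}]=(2\pi i)^{-1}$, with no leftover constant in the normalization \eqref{eq:quantum_dilogarithm}; this is the Faddeev--Kashaev pentagon identity, which I would invoke from the references behind \eqref{eq:quantum_dilogarithm}. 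In the same style, moving the Gaussians of ${\bf A}_r$ through ${\bf T}_{rs}$ and ${\bf T}_{sr}$ and collapsing what remains against ${\bf A}_r{\bf A}_s{\bf P}_{(rs)}$ via \eqref{eq:BCH}, I expect the inversion relation \eqref{eq:lifted_Kashaev_rel} to reduce to a reflection identity of the form $\Psi_b(z)\Psi_b(-z)=\zeta^{-1}e^{\pi i z^2}$, with $\zeta=e^{-\pi i(b+b^{-1})^2/12}$ as in \eqref{eq:lifted_Kashaev_rel}; this is where the constant in \eqref{eq:lifted_Kashaev_rel} originates. Finally, the consistency relation ${\bf A}_r{\bf T}_{rs}{\bf A}_s={\bf A}_s{\bf T}_{sr}{\bf A}_r$ I would deduce formally from the already-established order-three, inversion and commutativity relations, exactly as the classical relation \eqref{eq:consistency} follows from the others (cf. Rem.\ref{rem:Kashaev_notation}, where $\rho_s\omega_{ts}\rho_t=\rho_t\omega_{st}\rho_t$ is a consequence of $\omega_{ts}\rho_s\omega_{st}=\rho_t\rho_s P_{(ts)}$ and $\rho^3={\rm id}$); alternatively it can be checked directly by the same bookkeeping.

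As a conceptual cross-check and a way to organize the constant computations: for each relation $L=R$, Lem.\ref{lem:conjugation_action} together with Lem.\ref{lem:coordinate_change_for_elementary_transformation_of_Qdt} shows that conjugation by $L$ and by $R$ induce the same automorphism of the algebra generated by the $\wh{Y}_s,\wh{Z}_s$ --- this is precisely the classical version of the relation, which holds by the pictures underlying Lem.\ref{lem:relations_of_elementary_moves_of_Kashaev_groupoid} --- so $LR^{-1}$ commutes with all $\wh{Y}_s,\wh{Z}_s$, hence with all $Q_s,P_s$, hence is a scalar in ${\rm U}(1)$ by irreducibility; it then remains only to compute that scalar. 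I expect the real difficulty of the proof to be concentrated in exactly two places: invoking the operator pentagon identity of the quantum dilogarithm with the correct normalization of $\mathsf{a},\mathsf{b}$, and the careful phase tracking --- the $e^{-\pi i/3}$ ensuring ${\bf A}_s^3={\rm id}$, and above all the exact value $\zeta=e^{-\pi i(b+b^{-1})^2/12}$ in \eqref{eq:lifted_Kashaev_rel}, which must be matched against the constant in the reflection identity for $\Psi_b$. Everything else is routine BCH manipulation of the kind already performed in the proof of Lem.\ref{lem:conjugation_action}.
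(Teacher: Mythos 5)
The paper does not prove Proposition~\ref{prop:lifted_Kashaev_relations}; it is stated with a citation to \cite{Kash00}, so there is no proof in the text to compare against. Your sketch is a reasonable reconstruction of the standard argument: the easy relations \eqref{eq:lifted_Kashaev_relations_P}, \eqref{eq:lifted_Kashaev_relations_commutation} are trivial; ${\bf A}_s^3={\rm id}$ is a metaplectic-phase computation (conjugation by ${\bf A}_s$ does act on $(Q_s,P_s)$ by an order-three ${\rm SL}_2(\mathbb{Z})$ matrix, and the prefactor $e^{-\pi i/3}$ kills the residual eighth-root-of-unity scalar of the Weil representation); the pentagon reduces by BCH bookkeeping to the operator pentagon identity for $\Psi_b$ with no constant; and the inversion reduces to the inversion/reflection identity for $\Psi_b$, which is where $\zeta$ comes from. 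All of that is sound.

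Two things to fix. First, your claim that the consistency relation ${\bf A}_r{\bf T}_{rs}{\bf A}_s={\bf A}_s{\bf T}_{sr}{\bf A}_r$ is \emph{derived} from the order-three, inversion and commutativity relations is not supported, and the appeal to Rem.~\ref{rem:Kashaev_notation} is a misreading: that remark simply rewrites the list of relations \eqref{eq:rho_cubed}--\eqref{eq:commutativity} in the $\rho,\omega,P$ variables (and, incidentally, the last relation as printed there has a typo --- the terminal $\rho_t$ should be $\rho_s$, cf.~\eqref{eq:new_elementary_moves_relations}); it does not assert that the consistency relation is a consequence of the others. If you try to actually derive it, substituting ${\bf T}_{sr}$ from the inversion relation into the right-hand side reduces the consistency relation to an identity of the form ${\bf T}_{rs}\,{\bf A}_s^{-1}{\bf A}_r^{-1}\,{\bf T}_{sr}=\zeta\,{\bf A}_s{\bf P}_{(rs)}$, which is not a formal consequence of the inversion relation ${\bf T}_{rs}{\bf A}_r{\bf T}_{sr}=\zeta{\bf A}_r{\bf A}_s{\bf P}_{(rs)}$ plus order-three and commutativity. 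It should be verified directly (your fallback), or at least by the ``equal up to a scalar $c$ with $c^2=1$ by $r\leftrightarrow s$ symmetry, then $c=1$ by the $b\to 0$ limit'' argument. Second, the precise form of the reflection identity you write, $\Psi_b(z)\Psi_b(-z)=\zeta^{-1}e^{\pi i z^2}$, should be checked against the normalization \eqref{eq:quantum_dilogarithm}: the standard identity carries an extra factor coming from $\Psi_b(0)^2=e^{-\pi i(b^2+b^{-2})/12}$, and the discrepancy between $(b^2+b^{-2})/12$ and $(b+b^{-1})^2/12$ is exactly the $e^{\mp\pi i/6}$ that conspires with the $e^{-\pi i/3}$ prefactors of ${\bf A}_r,{\bf A}_s$ to produce the stated $\zeta$; as written your identity is off by that phase. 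Finally, a smaller caveat on the irreducibility cross-check: agreeing as conjugators on $\wh{Y}_s,\wh{Z}_s$ alone does not give a scalar commutant --- one needs the $b\leftrightarrow b^{-1}$ counterparts $\wh{\wh{Y}}_s,\wh{\wh{Z}}_s$ as well (the ``modular double'' point the paper makes after Prop.~\ref{prop:weakly_hold}) --- or to work directly with the unbounded generators $Q_s,P_s$ as you also mention.
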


\subsection{Quantization of $\mathscr{K}^+_{m,\wh{S}}$ for $m\ge 3$}

Let $m\ge 3$. Recall from the previous subsection that the algebraic quantization of $\mathscr{K}^+_{m,\wh{S}}$, in case Def.\ref{def:identification_of_quantized_algebras} is applicable, is given as a non-commutative algebra $\wh{\mathscr{K}}^{\,\, h}_{m,\wh{S}}$, generated over $\mathbb{C}$ by $\wh{Y}_s$'s and $\wh{Z}_s$'s and their inverses for $s$ running in the index set $I$ for all shaded triangles of an $m$-triangulation of an ideal triangulation $\tau$ of $\wh{S}$, together with a consistent assignment of an algebra automorphism of the skew field of fractions of this algebra to each morphism of $\mcal{K}(\wh{S})$. 

\vs

So we first need to say what the algebra $\wh{\mathscr{K}}^{\,\, h}_{m,\wh{S}} = a_h(\tau_{\rm dot})$ is, for each $\tau_{\rm dot} \in \mcal{K}(\wh{S})$. All we need to specify is the `extra' defining relations other than \eqref{eq:commutation_relations_YZ}, so that the set of relations for different $\tau_{\rm dot}$'s are preserved under the identification of the generators of $a_h(\tau_{\rm dot})$, so that Def.\ref{def:identification_of_quantized_algebras} may apply. One extreme candidate is to use no extra relations, as in Kashaev's quantization for $m=2$. Then we would be able to get well-defined representation of $\wh{\mathscr{K}}^{\,\, h}_{m,\wh{S}}$ on a Hilbert space, which is an optimal situation. However, as we will see, this does not work for $m\ge 3$. Another extreme candidate is to use the relations induced by the standard lifts (Def.\ref{def:standard_lift}) of {\em all} relations satisfied by classical functions $p_s$'s and $q_s$'s, i.e. the ones in Lem.\ref{lem:linear_algebraic_relations_among_ratio_coordinates}, together with the ones obtained by applying any index permutation of $I$. For $m=3$ we propose yet another candidate, using only {\em some} of the relations from Lem.\ref{lem:linear_algebraic_relations_among_ratio_coordinates}. Namely, for any two adjacent triangles $t$ and $s$ with any dot configurations, consider the `small middle diamond' in the quadrilateral formed by $t$ and $s$; see the dotted loop in Fig.\ref{fig:linearrel} for an example. Then via Lem.\ref{lem:linear_algebraic_relations_among_ratio_coordinates} and Def.\ref{def:standard_lift} this loop yields a linear relation of $\wh{q}_s$'s and $\wh{p}_s$'s. Collect all such relations, together with their index-permuted versions; then it is not hard to see that Def.\ref{def:identification_of_quantized_algebras} indeed applies. 
\begin{remark}
There might probably be another formulation of quantization so that we do not have to include all the index-permuted equations, but let us leave it for a topic of  future research. 
\end{remark}

\begin{figure}[htbp!]
\includegraphics[width=60mm]{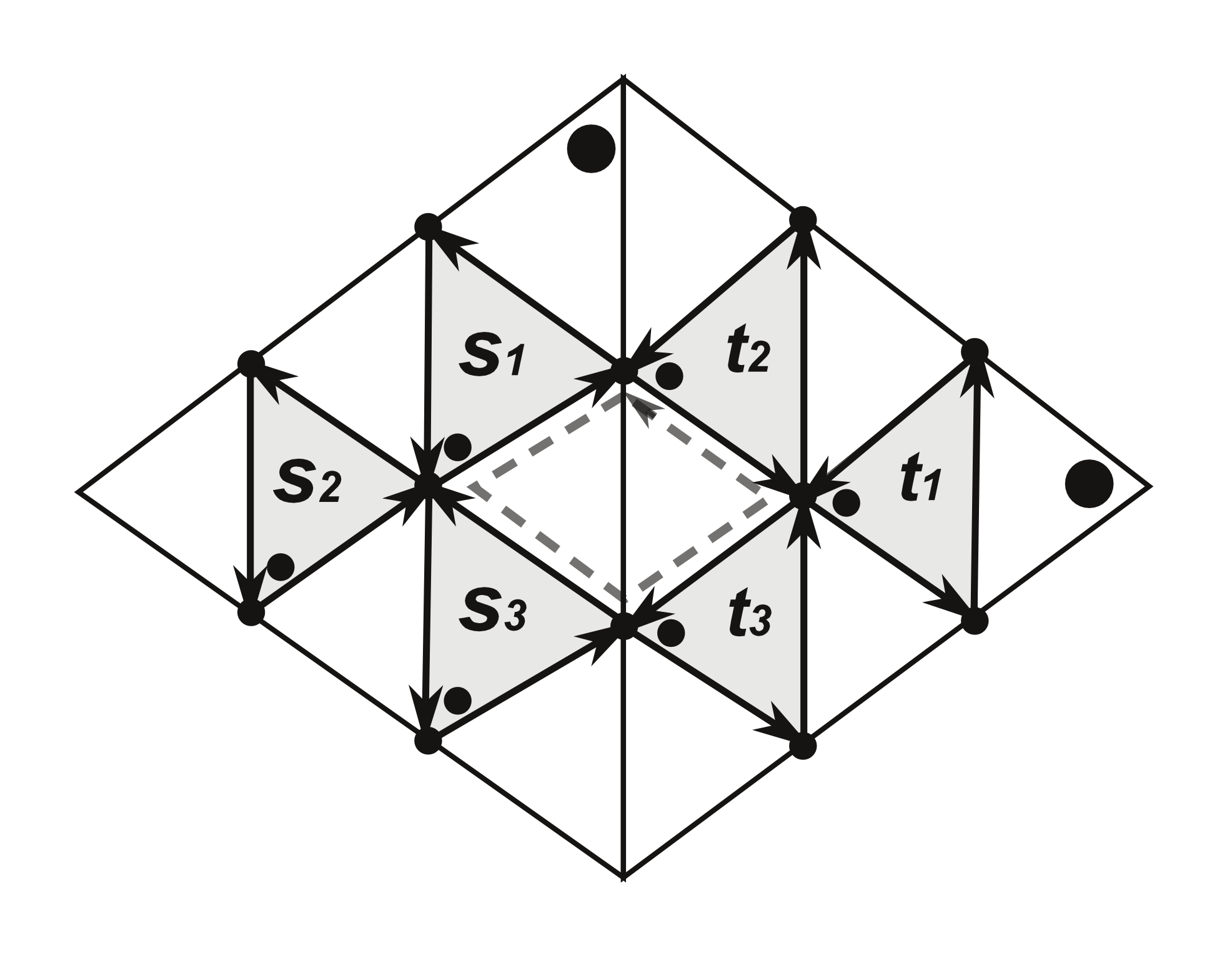}
\vspace{-7mm}
\caption{a source of an extra linear relation in $m=3$}
\label{fig:linearrel}
\end{figure}

\vspace{-2mm}

\vs

We now would like to associate to each morphism of $\mcal{K}(\wh{S})$ an automorphism of ${\rm Frac}(\wh{\mathscr{K}}^{\,\, h}_{m,\wh{S}})$. We shall use \eqref{eq:mcal_K_S_to_Qdt}; so we need to associate to each elementary morphism of ${\rm Qdt}_m(\wh{S})$ an automorphism of ${\rm Frac}(\wh{\mathscr{K}}^{\,\, h}_{m,\wh{S}})$, so that the resulting map $\mcal{K}(\wh{S}) \to *\mbox{-}{\rm Alg}$ is indeed a contravariant functor. The elementary transformations $A_s$, $T_{rs}$, $P_\sigma$ of ${\rm Qdt}_m(\wh{S})$ as in Def.\ref{def:elementary_transformations_of_qdt}  lead to automorphisms of ${\rm Frac}(\wh{\mathscr{K}}^{\,\, h}_{m,\wh{S}})$ given by the formal conjugation by the expressions in \eqref{eq:Kashaev_operators}. The concrete formulas for these automorphisms are documented in Lem.\ref{lem:conjugation_action}; however, it is still more convenient to work with the expressions \eqref{eq:Kashaev_operators} for ${\bf A}_{\cdot}$ and ${\bf T}_{\cdot \cdot}$, where $Q_s = \frac{1}{2\pi b} \wh{q}_s = \frac{1}{2\pi b} \log \wh{Y}_s$'s and $P_s = \frac{1}{2\pi b} \wh{p}_s = \frac{1}{2\pi b} \log \wh{Z}_s$'s, which comply with the notation for the operators $Q_s$ and $P_s$ which appeared in \eqref{eq:star_representation} in the previous subsection. We regard ${\bf P}_\cdot$ as a certain expression in $Q_s$'s and $P_s$'s satisfying the relations \eqref{eq:lifted_Kashaev_relations_P}, or more basically 
\begin{align}
\label{eq:P_conjugation_on_P_and_Q}
{\bf P}_\sigma^{-1} Q_s {\bf P}_\sigma = Q_{\sigma^{-1}(s)}, \quad 
{\bf P}_\sigma^{-1} P_s {\bf P}_\sigma = P_{\sigma^{-1}(s)},
\quad
{\bf P}_{\rm id} = {\rm id},
\quad
{\bf P}_{\gamma_1} \circ {\bf P}_{\gamma_2} = {\bf P}_{\gamma_1 \circ \gamma_2}.
\end{align}
In principle, we could express ${\bf P}_\cdot$ as a product of exponentials of quadratic expressions in $Q_s$'s and $P_s$'s like ${\bf A}_\cdot$, but let us just focus on its properties in  \eqref{eq:P_conjugation_on_P_and_Q}.

\vs

Now, in the case of $m\ge 3$, there are also invisible flips (see Def.\ref{def:elementary_transformations_of_qdt}), whose associated coordinate change maps are given as in Lem.\ref{lem:coordinate_change_for_elementary_transformation_of_Qdt}. The quantum version of this coordinate change map for an invisible flip $F_{rs}$ that we propose is the conjugation by the following expression:
\begin{align}
\label{eq:rho_h_F_rs}
\rho_h (F_{rs}) := {\bf F}_{rs} := e^{-2\pi i Q_r P_s},
\end{align}
Using \eqref{eq:basic_relations_PQ} and \eqref{eq:basic_commutation}, we can easily verify:
\begin{lemma}
\label{lem:conjugation_by_F_rs}
One has
\begin{align*}
{\bf F}_{rs}^{-1} \wh{Y}_r {\bf F}_{rs} = \wh{Y}_s, \quad
{\bf F}_{rs}^{-1} \wh{Z}_r {\bf F}_{rs} = \wh{Z}_r \wh{Z}_s^{-1}, \quad
{\bf F}_{rs}^{-1} \wh{Y}_s {\bf F}_{rs} = \wh{Y}_r \wh{Y}_s, \quad
{\bf F}_{rs}^{-1} \wh{Z}_s {\bf F}_{rs} = \wh{Z}_s. \qed
\end{align*}
\end{lemma}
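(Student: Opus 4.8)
The plan is a short formal computation of the adjoint action of ${\bf F}_{rs} = e^{-2\pi i Q_r P_s}$ on the generators, of exactly the same elementary nature as the easy cases of Lemma \ref{lem:conjugation_action}, and in fact simpler since no quantum dilogarithm functional equation is needed. First I would pass to the canonical pairs: by \eqref{eq:star_representation} and \eqref{eq:quantum_log_variables} one has $\wh{Y}_t = e^{2\pi b Q_t}$ and $\wh{Z}_t = e^{2\pi b P_t}$ for $t\in\{r,s\}$, so it suffices to compute ${\bf F}_{rs}^{-1} Q_t {\bf F}_{rs}$ and ${\bf F}_{rs}^{-1} P_t {\bf F}_{rs}$ for $t\in\{r,s\}$ and then exponentiate, recombining exponentials via the BCH formula \eqref{eq:BCH}.

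Second, I would run the Hadamard expansion of ${\bf F}_{rs}^{-1}X{\bf F}_{rs} = e^{2\pi i Q_r P_s}\,X\,e^{-2\pi i Q_r P_s}$. Using the basic relations \eqref{eq:basic_relations_PQ}, the only nonzero first brackets are $[Q_r P_s, P_r] = [Q_r,P_r]P_s = -\tfrac{1}{2\pi i}P_s$ and $[Q_r P_s, Q_s] = Q_r[P_s,Q_s] = \tfrac{1}{2\pi i}Q_r$ (noting the sign $[Q_r,P_r] = -[P_r,Q_r]$), while $[Q_r P_s, Q_r] = [Q_r P_s, P_s] = 0$ because $r\neq s$; since the outputs $P_s$ and $Q_r$ again commute with $Q_r P_s$, the series truncates after the linear term. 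This gives ${\bf F}_{rs}^{-1}Q_r{\bf F}_{rs} = Q_r$, ${\bf F}_{rs}^{-1}P_s{\bf F}_{rs} = P_s$, ${\bf F}_{rs}^{-1}Q_s{\bf F}_{rs} = Q_s+Q_r$, and ${\bf F}_{rs}^{-1}P_r{\bf F}_{rs} = P_r - P_s$. Equivalently one may apply \eqref{eq:basic_commutation} with the single generator $P$ there replaced by the product $Q_r P_s$, which is legitimate precisely because $[Q_r P_s,\cdot\,]$ lands in the span of $P_s$ and $Q_r$, each central in the two-generator subalgebra in play.

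Third, exponentiating — with no BCH correction, since in each of the products below the two factors commute — yields ${\bf F}_{rs}^{-1}\wh{Y}_r{\bf F}_{rs} = e^{2\pi b Q_r} = \wh{Y}_r$, ${\bf F}_{rs}^{-1}\wh{Z}_r{\bf F}_{rs} = e^{2\pi b(P_r-P_s)} = \wh{Z}_r\wh{Z}_s^{-1}$, ${\bf F}_{rs}^{-1}\wh{Y}_s{\bf F}_{rs} = e^{2\pi b(Q_s+Q_r)} = \wh{Y}_r\wh{Y}_s$, and ${\bf F}_{rs}^{-1}\wh{Z}_s{\bf F}_{rs} = e^{2\pi b P_s} = \wh{Z}_s$, which are the asserted identities; as a consistency check, all four visibly reduce to the classical rule \eqref{eq:F_rs_coordinate_change_formula} for $Q' = F_{rs}.Q$ as $b\to 0$. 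There is essentially no obstacle here; the only mild points to watch are the commutator signs and the fact that, exactly as in Lemma \ref{lem:conjugation_action}, the whole computation is to be read as a formal identity on a suitable dense domain such as $\bigotimes_s W_s$, with the analytic justification of treating $e^{-2\pi i Q_r P_s}$ as a genuine unitary operator deferred to \cite{G}, \cite{FG09}.
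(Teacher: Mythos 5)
Your computation is correct and is precisely what the paper has in mind: the paper offers no proof beyond the remark that \eqref{eq:basic_relations_PQ} and \eqref{eq:basic_commutation} ``can easily verify'' the claim, and your route via the truncating Hadamard series for $\mathrm{Ad}_{{\bf F}_{rs}^{-1}}$ on $Q_r,P_r,Q_s,P_s$ followed by exponentiation (with the observation that \eqref{eq:basic_commutation} applies formally to the quadratic generator $Q_rP_s$ because the output brackets $Q_r$ and $P_s$ are central in the relevant subalgebra) is the standard way to carry it out; the sign bookkeeping with $[Q_r,P_r]=-[P_r,Q_r]$ is handled correctly.

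There is, however, one point you should not have glossed over. Your first conclusion, ${\bf F}_{rs}^{-1}\wh{Y}_r{\bf F}_{rs}=\wh{Y}_r$, is \emph{not} what the lemma as printed asserts: the printed statement reads ${\bf F}_{rs}^{-1}\wh{Y}_r{\bf F}_{rs}=\wh{Y}_s$, yet you write ``which are the asserted identities'' without remarking on the discrepancy. Your answer is in fact the correct one: $[Q_rP_s,Q_r]=0$ (as $r\neq s$) forces the conjugation to fix $Q_r$ and hence $\wh{Y}_r$; it is the unique answer consistent with the classical invisible-flip rule $Y'_r=Y_r$ of Lem.\ref{lem:coordinate_change_for_elementary_transformation_of_Qdt}(2); and the sentence immediately following the lemma in the paper, which asserts that conjugation by ${\bf F}_{rs}$ recovers that classical formula as $b\to 0$, would be false otherwise. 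So the right-hand side $\wh{Y}_s$ in the printed statement is a typo for $\wh{Y}_r$. A careful write-up should flag this explicitly rather than silently asserting agreement with the statement as given.
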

Thus indeed the formal conjugation by ${\bf F}_{rs}$ recovers the classical coordinate change formula for the invisible flip $F_{rs}$ as $b \to 0$; see Lem.\ref{lem:coordinate_change_for_elementary_transformation_of_Qdt}. Hence, what remains to be done is to check for this algebraic quantization that the above constructed images of elementary morphisms $\mcal{K}(\wh{S})$, which are automorphisms of ${\rm Frac}(\wh{\mathscr{K}}^{\,\, h}_{m,\wh{S}})$, satisfy the algebraic relations in Lem.\ref{lem:relations_of_elementary_moves_of_Kashaev_groupoid}. This is done in \S\ref{subsec:consistency_condition_for_3} for the case $m=3$, by showing a formal version of the consistency condition \eqref{eq:consistency_condition}. Unlike the case $m=2$, our proof of the consistency condition for $m=3$ {\em does} use the lifted linear relations of $\wh{p}_r$'s and $\wh{q}_r$'s, and therefore we cannot use the representation as in \eqref{eq:star_representation}. 

\vs

So, to summarize, for $m=2$ there is a quantization of $\mathscr{K}^+_{2,\wh{S}}$ as operators on a Hilbert space. For $m=3$ we have only an algebraic quantization, but not represented on a Hilbert space yet. For $m>3$ we have a candidate for an algebraic quantization, but without a proof. What Kashaev quantized for $m =2$ is some lift of $\mathscr{K}^+_{2,\wh{S}}$, obtained by forgetting the linear relations of $q_s$'s and $p_s$'s in Def.\ref{def:standard_lift}. This is analogous to what Fock-Goncharov did \cite{FG09}; they quantized some lift of a Teichm\"uller space of a punctured surface parametrized by shear coordinates, obtained by forgetting the linear relations among the shear coordinates coming from small loops around punctures. Their lifted space is the holed Teichm\"uller space; a natural geometric meaning of the space which we quantized in the present paper is not clear.

\section{Proof}
\label{sec:proof}

\subsection{Proof of completeness of Kashaev relations for dotted triangulations}
\label{subsec:proof_of_completeness}

In the present subsection we present a proof of Prop.\ref{prop:Kashaev_full_presentation}. However, to comply with Kashaev's own notation \cite{Kash00}, we introduce slightly different elementary moves of the Kashaev groupoid $\mcal{K}(\wh{S})$ (Def.\ref{def:Kashaev_groupoid}), as mentioned in Rem.\ref{rem:Kashaev_notation}:
$$
\rho_i := \til{T}_i, \quad
\omega_{ij} :=  \til{A}_i \til{A}_j^{-1} \til{T}_{ij} \til{A}_j \til{A}_i^{-1}, \quad
P_\sigma := \til{P}_\sigma,
$$
for $i,j \in I$ and permutations $\sigma$ of $I$, where $I$ is a fixed index set of ideal triangles of an ideal triangulation of $\wh{S}$. From now on, when we say {\em elementary moves} of $\mcal{K}(\wh{S})$ we mean these. Then we can rewrite Lem.\ref{lem:relations_of_elementary_moves_of_Kashaev_groupoid} as follows: the new elementary moves satisfy
\begin{align}
\label{eq:new_elementary_moves_relations}
\left\{ {\renewcommand{\arraystretch}{1.4} 
\begin{array}{l}
\rho_i^3 = {\rm id}, \quad 
\omega_{jk} \omega_{ik} \omega_{ij}  = \omega_{ij} \omega_{jk}, \quad
\omega_{ij} \rho_j \omega_{ji} = \rho_i \rho_j P_{(i \, j)}, \quad
\rho_j \omega_{ij} \rho_i = \rho_i \omega_{ji} \rho_j, \\
P_{\rm id} = {\rm id}, \quad
P_{\sigma_1} P_{\sigma_2} = P_{\sigma_1 \circ \sigma_2}, \quad
P_\sigma \rho_i = \rho_{\sigma(i)} P_\sigma, \quad
P_\sigma \omega_{ij} = \omega_{\sigma(i) \, \sigma(j)} P_\sigma, \\
\omega_{ij} \omega_{k\ell} = \omega_{k\ell} \omega_{ij}, \quad
\omega_{ij} \rho_k = \rho_k \omega_{ij}, \quad
\rho_i \rho_j = \rho_j \rho_i,
\end{array} } \right.
\end{align}
for mutually distinct $i,j,k,\ell \in I$ and for any  permutations $\sigma_1,\sigma_2,\sigma$ of $I$, where each equality means that if $(\tau,D,L)$ supports the left-hand side (Def.\ref{def:support}), then it supports the right-hand side too, and the results of the application of both sides to $(\tau,D,L)$ are the same. The statement to prove is as follows.

\begin{proposition}
\label{ss}
Any morphism of $\mcal{K}(\wh{S})$ is composition of finite number of these new elementary morphisms, and any algebraic relations among elementary moves are consequences of the ones in \eqref{eq:new_elementary_moves_relations}.
\end{proposition}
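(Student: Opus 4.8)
The plan is to compare $\mcal{K}(\wh S)$ with the groupoid $\mcal{G}$ \emph{presented} by the generators $\rho_i,\omega_{ij},P_\sigma$ and the relations \eqref{eq:new_elementary_moves_relations}, via the tautological functor $\Phi\colon\mcal{G}\to\mcal{K}(\wh S)$ that is the identity on objects and sends each abstract generator to the corresponding elementary move. Because $\mcal{K}(\wh S)$ has, by definition, exactly one morphism between any two objects, $\Phi$ is an isomorphism if and only if it is surjective on morphisms (\emph{generation}) and every morphism of $\mcal{G}$ from an object to itself is trivial (\emph{completeness of relations}); these are precisely the two assertions of Prop.~\ref{ss}. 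Generation is the easier half: by the classical theorem that any two ideal triangulations of $\wh S$ are connected by a sequence of flips (Prop.~\ref{prop:Whitehead}), and by the observation that a single flip along an edge $e$ of any dotted triangulation can be performed by first applying a bounded sequence of dot changes $\rho_\bullet$ to bring the two triangles incident to $e$ into the configuration of Fig.~\ref{fig:action_of_til_T_ts}, then an enhanced flip, then a label permutation $P_\sigma$, every morphism of $\mcal{K}(\wh S)$ is a finite composite of the elementary moves.

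For completeness, let $w$ be a word in $\rho^{\pm 1}_\bullet,\omega^{\pm 1}_{\bullet\bullet},P^{\pm 1}_\bullet$ supported by a dotted triangulation $\tau_{\rm dot}=(\tau,D,L)$ and returning it to itself; the goal is to show $w=\mathrm{id}$ in $\mcal{G}$. Forgetting dots and labels gives a functor $\pi\colon\mcal{K}(\wh S)\to Pt(\wh S)$ under which $\rho_i,P_\sigma\mapsto\mathrm{id}$ and $\omega_{ij}\mapsto W_e$, where $e$ is the common side of the triangles labeled $i,j$; thus $\bar w:=\pi(w)$ is a loop of flips based at $\tau$. I would now invoke the known presentation of the Ptolemy groupoid — generators the flips, relations the pentagon, the commutativity of flips along disjoint edges, and $W_{e'}W_e=\mathrm{id}$ (equivalently, the simple connectivity of the flip complex of $\wh S$, cf.\ \cite{PennerBook}) — to express $\bar w$ in the free groupoid on flips as a product of conjugates of these three kinds of relators. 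Each relator is supported in a subsurface carrying only a few triangles (a pentagon, a ``square'', or one flip region), so by a standard locality argument it is enough to lift each such relator to the decorated setting in that small case. A pentagon of flips lifts — once the intervening dot/label bookkeeping along $w$ is taken into account — to an instance of the pentagon relation $\omega_{jk}\omega_{ik}\omega_{ij}=\omega_{ij}\omega_{jk}$ of \eqref{eq:new_elementary_moves_relations}, conjugated by $\rho$'s and $P$'s that are absorbed using the index-change, consistency and commutativity relations; a disjoint-flip square lifts to the commutativity relation $\omega_{ij}\omega_{k\ell}=\omega_{k\ell}\omega_{ij}$; and the double flip $W_{e'}W_e$ lifts via the inversion relation $\omega_{ij}\rho_j\omega_{ji}=\rho_i\rho_j P_{(i\,j)}$, which expresses the ``reverse'' flip $\omega_{ji}$ in terms of $\omega_{ij}$ and fiber moves.

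Performing these substitutions throughout $w$, and inducting on a complexity such as the number of $\omega$-letters (weighted by position), rewrites $w$ — modulo consequences of \eqref{eq:new_elementary_moves_relations} — as a \emph{fiber word}: a word in $\rho_i$'s and $P_\sigma$'s only, still supported by $\tau_{\rm dot}$ and returning it to itself. It remains to see that every such fiber word is trivial in $\mcal{G}$. The relations $\rho_i^3=\mathrm{id}$, $\rho_i\rho_j=\rho_j\rho_i$ ($i\ne j$), $P_{\mathrm{id}}=\mathrm{id}$, $P_{\sigma_1}P_{\sigma_2}=P_{\sigma_1\circ\sigma_2}$, $P_\sigma\rho_i=\rho_{\sigma(i)}P_\sigma$ of \eqref{eq:new_elementary_moves_relations} present exactly the wreath-type group $\bigl(\bigoplus_{t}\mathbb{Z}/3\bigr)\rtimes\mathrm{Sym}(I)$ (one $\mathbb{Z}/3$ per triangle, rotating its dot, and $\mathrm{Sym}(I)$ permuting labels); a direct count shows this group acts \emph{freely} on the set of dotted, labeled triangulations with underlying ideal triangulation $\tau$ — two such data coincide precisely when all their dots and all their labels agree. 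Hence a fiber word fixing $\tau_{\rm dot}$ is the identity element of this group, and therefore the identity morphism of $\mcal{G}$, which finishes the proof.

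The main obstacle is the lifting step in the second paragraph: decorating a pentagon of flips forces the insertion of dot-change conjugators — this is exactly why the convenient generator is $\omega_{ij}=\til A_i\til A_j^{-1}\til T_{ij}\til A_j\til A_i^{-1}$ rather than $\til T_{ij}$ — and one must check, by a careful though elementary diagram chase in the $5$-gon (and likewise in the ``square'' and the single-flip region), that the $\omega$-pentagon together with the inversion, consistency, index-change and commutativity relations of \eqref{eq:new_elementary_moves_relations} genuinely suffice to commute all those conjugators past the $\omega$'s and cancel them, leaving only a controlled fiber correction. Equivalently, the real content is the verification of Prop.~\ref{ss} in the finitely many ``universal'' small-surface cases, after which the general statement follows by locality together with the cited simple connectivity of the flip complex; everything else is either classical input or a routine wreath-product computation.
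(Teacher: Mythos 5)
The architecture you describe matches the paper's: project to the Ptolemy groupoid, invoke Whitehead's presentation (pentagon, commutativity of disjoint flips, involutivity), and deal separately with (a) the fiber over a fixed ideal triangulation and (b) the lifting of Ptolemy relators to the decorated setting. Your third paragraph, handling (a), is a genuine simplification worth noting: identifying the group presented by the fiber relations as $(\mathbb{Z}/3)^{I}\rtimes \mathrm{Sym}(I)$ and observing that it acts simply transitively on the (dot,\,label) data over a fixed $\tau$ is cleaner and shorter than the paper's treatment at the same stage (Lem.\ref{lem:M_tau_simply-connected}), which establishes simple connectivity of the fiber by a second, nested application of the Bakalov--Kirillov criterion.

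The gap is in (b), which you explicitly defer. Rewriting a loop $w$ into a fiber word modulo \eqref{eq:new_elementary_moves_relations} --- your ``careful though elementary diagram chase'' --- is precisely the content of the paper's Lem.\ref{lem:condition3_for_our_situation}, a several-page, four-case argument, and it is the hard half of the proposition. The reason it is not routine, and the reason ``standard locality argument'' is misleading here, is that once you decompose $\bar w$ into a product of conjugated Ptolemy relators, the conjugating paths are global: two lifts of a \emph{single} flip edge in the dotted groupoid can differ by an arbitrary permutation $P_\sigma$ of all of $I$ and by dot rotations on every triangle, not just on the quadrilateral where the flip takes place. One must first normalize the two lifts on all generic triangles using the index-change and commutativity relations, and only then is one reduced to the finite enumeration over the two special triangles --- which is exactly what the paper's Cases 1--4 do, each requiring an explicitly identified $2$-cell from \eqref{eq:new_elementary_moves_relations}. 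The paper avoids having to set up this reduction ad hoc by phrasing everything as simple connectivity of a CW-complex and appealing to the Bakalov--Kirillov lemma (Lem.\ref{lem:BaKi}), whose condition (3) packages the edge-lifting obligation in exactly the right form. As written, then, your proposal sketches the correct structure and improves the fiber step, but it does not establish completeness of the relations: the lifting computation you flag as the ``main obstacle'' is the proof, not a remark about it.
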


We convert this into the language of CW-complexes.
\begin{definition}[Kashaev complex]
The {\em Kashaev complex} $\mcal{C}_{\mcal{K}}(\wh{S})$ of a marked hyperbolic surface $\wh{S}$ is a two-dimensional CW-complex constructed as follows.

The set of vertices ($0$-cells) are the set of all objects of $\mcal{K}(\wh{S})$, that is, all dotted ideal triangulations of $\wh{S}$ (Def.\ref{def:dotted_triangulation}). There is one edge ($1$-cell) between two (possibly same) vertices if and only if one vertex can be obtained by applying a single elementary move to the other. We can label the oriented edge with the corresponding elementary move, which we refer to as the {\em type} of the edge. If $(\tau', D', L')$ is obtained by applying an elementary move $g$ to $(\tau,D,L)$, we write $(\tau',D',L') = g(\tau,D,L)$ and we denote the edge between them by $(\Delta,D,L) \stackrel{g}{\longrightarrow} (\Delta',D',L')$ or $(\Delta',D',L') \stackrel{g^{-1}}{\longrightarrow} (\Delta,D,L)$. These are all the edges.

The $2$-cells come from the relations \eqref{eq:new_elementary_moves_relations} as follows. Take a relation among \eqref{eq:new_elementary_moves_relations}, and write it as $g_\ell g_{\ell-1} \ldots g_1 =1$, where each $g_a$ is an elementary move or its inverse. For each $(\Delta,D,L)$ supporting $g_\ell g_{\ell-1} \ldots g_1$, think of the $n$ vertices $g_a g_{a-1} \cdots g_1(\Delta,D,L)$, for $a=1,2,\ldots,\ell$. One observes that the vertices $g_a  \cdots g_1(\Delta,D,L)$ and $g_{a+1} \cdots g_1(\Delta,D,L)$ are connected by an edge, for $a=1,\ldots,\ell$, where $g_{\ell+1}$ is defined to be $g_1$. Then we attach a $2$-cell so that its boundary coincides with this $1$-dimensional subcomplex consisting of $\ell$ vertices and $\ell$ edges. The relation associated to each $2$-cell is called the {\em type} of the $2$-cell.

Attaching such $2$-cell to every $(\Delta,D,L)$ supporting the left-hand side of this relation, and doing this for each relation in \eqref{eq:new_elementary_moves_relations}, we get the CW-complex $\mcal{C}_{\mcal{K}}(\wh{S})$.
\end{definition}

What we shall actually prove in the present  subsection is the following:
\begin{proposition}
\label{prop:Kashaev_complex_simply-connected}
The Kashaev complex $\mcal{C}_{\mcal{K}}(\wh{S})$ is connected and simply-connected.
\end{proposition}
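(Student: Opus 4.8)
The plan is to treat connectedness and simple-connectedness separately, deducing the latter by comparison with the classical Ptolemy complex; by the standard dictionary between completeness of a groupoid presentation and simple-connectedness of its Cayley $2$-complex, this yields Prop.\ref{prop:Kashaev_full_presentation}.

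First I would prove \emph{connectedness}. Given two objects $(\tau_0,D_0,L_0)$ and $(\tau_1,D_1,L_1)$ of $\mcal{K}(\wh{S})$, connect $\tau_0$ to $\tau_1$ by a finite sequence of flips (Prop.\ref{prop:Whitehead}). Each flip along an edge $e$ is realized in $\mcal{C}_{\mcal{K}}(\wh{S})$ by an enhanced flip $\til{T}_{ts}$, equivalently an $\omega_{ts}$-edge, after first applying a bounded number of dot changes $\rho_{(\cdot)}$ to the two triangles adjacent to $e$ so that their dots sit in the configuration demanded in Fig.\ref{fig:action_of_til_T_ts}. Running the whole sequence lands us at $\tau_1$ carrying \emph{some} dot/label data; since $\rho_t$ cycles through the three dot positions of triangle $t$ and $P_\sigma$ realizes any relabeling, a final word in the $\rho$'s followed by one $P_\sigma$ reaches $(\tau_1,D_1,L_1)$. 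This already reproves the generation half of Prop.\ref{prop:Kashaev_full_presentation}.

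For \emph{simple-connectedness} I would introduce the CW-complex $\mcal{C}_{Pt}(\wh{S})$ attached to a label-enhanced version of the Ptolemy groupoid $Pt(\wh{S})$ (Def.\ref{def:Ptolemy_groupoid}): $0$-cells are ideal triangulations with a bijection of their triangles onto $I$, $1$-cells are flips $W_e$ (Def.\ref{def:flip}) and relabelings $P_\sigma$, and $2$-cells realize the pentagon relation for flips, commutativity of flips supported on disjoint edges, the involution relation (flipping along $e$ and then along the new diagonal returns to the start up to transposing the two triangle labels), and the elementary relabeling relations. By Penner's presentation of the Ptolemy groupoid \cite{PennerBook},\cite{Penner2} — which can also be obtained directly from the contractibility of the decorated Teichm\"uller space and the cell decomposition dual to ideal triangulations — $\mcal{C}_{Pt}(\wh{S})$ is connected and simply-connected. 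Define a cellular map $\pi\colon\mcal{C}_{\mcal{K}}(\wh{S})\to\mcal{C}_{Pt}(\wh{S})$ by $(\tau,D,L)\mapsto(\tau,L)$, by collapsing each $\rho$-edge to a point, and by sending $\omega_{ij}\mapsto W_e$ and $P_\sigma\mapsto P_\sigma$. Inspecting \eqref{eq:new_elementary_moves_relations} shows $\pi$ carries the pentagon $2$-cell to the pentagon, the $\omega$-commutativity $2$-cells to flip-commutativity, the inversion $2$-cell to the involution, the $\omega$ index-change $2$-cell to the relabeling relation, and every remaining $2$-cell of $\mcal{C}_{\mcal{K}}(\wh{S})$ (those involving only $\rho$'s and $P$'s, the consistency cell $\rho_j\omega_{ij}\rho_i=\rho_i\omega_{ji}\rho_j$, and the $\rho$-$\omega$ commutativity cell) to a constant disk.

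Now take a loop $\gamma$ in $\mcal{C}_{\mcal{K}}(\wh{S})$. Its image $\pi(\gamma)$ bounds a disk in $\mcal{C}_{Pt}(\wh{S})$ tiled by Ptolemy $2$-cells, and I would lift this disk cell by cell: each flip edge is covered by an $\omega$-edge, each relabeling by a $P_\sigma$-edge, and each Ptolemy $2$-cell is filled in $\mcal{C}_{\mcal{K}}(\wh{S})$ by the matching relator of \eqref{eq:new_elementary_moves_relations}, once one threads suitable $\rho$-correction edges along the shared boundaries so the dot configurations become the ones required by the $\omega$'s, the discrepancies being absorbed using the inversion, consistency, index-change and $\rho$-commutativity relations. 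Consequently $\gamma$ is homotopic, rel base point and through $2$-cells of $\mcal{C}_{\mcal{K}}(\wh{S})$, to a loop $\gamma'$ with $\pi(\gamma')$ constant — a loop lying over a single ideal triangulation $\tau_0$, hence a word in the $\rho$'s and $P_\sigma$'s alone. The subcomplex of $\mcal{C}_{\mcal{K}}(\wh{S})$ over $\tau_0$ is the incidence complex of all dot-and-label configurations of $\tau_0$ under $\rho$- and $P$-moves, with $2$-cells $\rho_i^3=\mathrm{id}$, $\rho_i\rho_j=\rho_j\rho_i$, $P_{\mathrm{id}}=\mathrm{id}$, $P_{\sigma_1}P_{\sigma_2}=P_{\sigma_1\circ\sigma_2}$, $P_\sigma\rho_i=\rho_{\sigma(i)}P_\sigma$; this is the standard (and complete) presentation of that groupoid, so it is simply-connected and $\gamma'$ bounds there. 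Hence $\gamma$ is null-homotopic and $\mcal{C}_{\mcal{K}}(\wh{S})$ is simply-connected.

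\emph{Main obstacle.} The delicate step is the cell-by-cell lifting of the Ptolemy disk, in particular filling the lift of the pentagon: the five flips of a pentagon occur in varying dot configurations, so one must thread the $\rho$-corrections so that each $\omega$-edge is applicable and the net effect of the corrections is annihilated by precisely the relations recorded in \eqref{eq:new_elementary_moves_relations} (equivalently Lem.\ref{lem:relations_of_elementary_moves_of_Kashaev_groupoid}), and by no others. A secondary, bookkeeping-type obstacle is to confirm the Ptolemy presentation in the generality we need — marked hyperbolic surfaces with boundary arcs as well as punctures — and in particular that flipping the same edge twice contributes no relation beyond the involution cell already included.
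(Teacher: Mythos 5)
Your strategy is in the right spirit and close to the paper's: both compare $\mcal{C}_{\mcal{K}}(\wh{S})$ to a Ptolemy-type complex via a forgetful cellular map, use Whitehead's theorem (Prop.\ref{prop:Whitehead}) as the external input, and then deal with the fibers. The paper, however, applies the Bakalov--Kirillov criterion (Prop.6.2 of \cite{BaKi}, quoted as Lem.\ref{lem:BaKi} above) to the map $f:\mcal{C}_{\mcal{K}}(\wh{S})\to\mcal{C}_{Pt}(\wh{S})$ that forgets \emph{both} dots and labels, whereas you map onto a label-enhanced Ptolemy complex and then try to lift disks cell-by-cell directly. This is a genuinely different route, and it is less economical: once you keep labels in the target you must \emph{separately} prove that the label-enhanced Ptolemy complex is simply-connected (Penner's result is stated for the unlabeled groupoid), which is itself a BaKi-type fibration argument; whereas the paper pays this cost only once, inside the fiber $f^{-1}(\tau)$, via Lem.\ref{lem:M_tau_simply-connected}.

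More importantly, the proposal has two genuine gaps, one of which you flag yourself but leave unresolved.
First, the step you call the ``main obstacle'' --- threading $\rho$-corrections so that the lifts of adjacent Ptolemy $2$-cells agree along shared edges, and absorbing the net discrepancies using the relations in \eqref{eq:new_elementary_moves_relations} ``and no others'' --- is exactly condition~(3) of the BaKi lemma, and it is not a bookkeeping triviality. In the paper this is Lem.\ref{lem:condition3_for_our_situation}: given two lifts $\omega_{ij}^\epsilon$ and $\omega_{i_0j_0}^{\epsilon_0}$ of the same flip edge, one first corrects the labels with a $P_\sigma$-cell, then corrects the dots of the \emph{generic} (uninvolved) triangles with $\rho$-$\omega$ commutativity cells, and finally reduces to a case analysis on the four possible dot/label configurations of the two special triangles (Fig.\ref{fig:omega1}--Fig.\ref{fig:omega4}), which are closed off using the inversion, consistency, and index-change cells. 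Asserting that this works is precisely what one needs to prove; without this, the ``each Ptolemy $2$-cell is filled by the matching relator'' claim is unsupported.
Second, you assert that the subcomplex over a fixed $\tau_0$ carries ``the standard (and complete) presentation of that groupoid''; completeness of that presentation is not a known fact to be cited, it must be proved, and the paper does so (Lem.\ref{lem:M_tau_simply-connected}) by a second application of the BaKi lemma, fibering over the dot-only complex $\mcal{C}_\tau$ and checking that the $P_{\sigma_1}P_{\sigma_2}=P_{\sigma_1\circ\sigma_2}$ cells make the label-fibers the $2$-skeleton of a simplex. As written, your appeal to this being ``standard'' is circular with respect to the very type of statement you are trying to prove.

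A cleaner path that keeps your intuition is to adopt the BaKi lemma outright (as the paper does): it isolates exactly the four things you must check --- simple-connectedness of the base, simple-connectedness of fibers, compatibility of two lifts of an edge, and liftability of each base $2$-cell --- which are the pentagon/commutativity/involutivity cells of $\mcal{C}_{Pt}(\wh{S})$, and forces you to confront the ``threading'' step as a well-posed, finitely-many-case verification rather than as an acknowledged but unresolved obstacle.
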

This means first that any two $(\Delta,D,L)$ and $(\Delta',D',L')$ can be connected by a finite sequence of elementary moves, and more importantly that any algebraic relation satisfied by elementary moves is a consequence of finitely many relations in \eqref{eq:new_elementary_moves_relations}. Instead of proving this from scratch, we quote a similar result for the complex constructed out of the Ptolemy groupoid $Pt(\wh{S})$ (Def.\ref{def:Ptolemy_groupoid}) of ideal triangulations $\tau$ of $\wh{S}$ (Def.\ref{def:ideal_triangulation}).
\begin{definition}[Ptolemy complex; see \cite{PennerBook}]
The {\em Ptolemy complex} $\mcal{C}_{Pt} (\wh{S})$ is the two-dimensional CW-complex, defined as follows. The vertices ($0$-cells) are all possible ideal triangulations $\tau$ of $\wh{S}$. For two distinct vertices $\tau$ and $\tau'$ of $\mcal{C}_{Pt} (\wh{S})$ that differ by a flip $W_e$ for an edge $e$ of $\tau$, there is an edge ($1$-cell) between them; these are all the edges. There are following types of $2$-cells:
\begin{align*}
\begin{array}{rl}
{\rm [pentagon]} & (WW =WWW){\rm -type}, \\
{\rm [commutativity]} & (WW= WW){\rm -type}, \\
{\rm [involutivity]} & (WW=1){\rm -type},
\end{array}
\end{align*}
attached according to the following relations, in an analogous manner as in the case of the Kashaev complex $\mcal{C}_\mcal{K}(\wh{S})$. First, for convenience, let us label the edges of an ideal triangulation $\tau$. Then ideal triangulations connected to $\tau$ by a flip naturally carries a labeling for edges, induced from that of $\tau$. For any edge $e$, the relation $W_e W_e = {\rm id}$ corresponds to the `involutivity $2$-cell'. For any edges $e_1,e_2$ not sharing an endpoint, the relation $W_{e_1} W_{e_2} = W_{e_2} W_{e_1}$ corresponds to the `commutativity $2$-cell'. For any edges $e_1,e_2$ sharing exactly one endpoint, the relation $W_{e_1} W_{e_2} W_{e_1} W_{e_2} W_{e_1} = {\rm id}$ corresponds to the `pentagon $2$-cell'.
\end{definition}

\begin{proposition}[Whitehead's classical fact]
\label{prop:Whitehead}
This complex $\mcal{C}_{Pt}(\wh{S})$ is connected and simply-connected. 
\end{proposition}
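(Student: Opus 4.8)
The plan is to deduce the statement from the contractibility of Penner's decorated Teichm\"uller space, following the now-classical arguments of Harer, Penner and Hatcher (see \cite{Penner}, \cite{PennerBook}); what follows is a sketch of that route. I would split the two assertions. For connectedness I would show directly that any two ideal triangulations of $\wh{S}$ are joined by a finite sequence of flips: fix a reference triangulation $\tau_0$ and induct on the total geometric intersection number of the interior edges of $\tau$ with those of $\tau_0$, exhibiting for $\tau \ne \tau_0$ a flip on $\tau$ that strictly decreases this number — this is the standard Whitehead-move reduction, with the polygon case (the associahedron of Sleator--Tarjan--Thurston) as the model and each boundary arc treated as carrying one fixed vertex. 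Alternatively, connectedness falls out for free from the cell decomposition in the next step.

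For simple-connectedness I would use Penner's $\lambda$-length coordinates: for a fixed ideal triangulation $\tau$ of $\wh{S}$ the edge lengths give a homeomorphism of the (bordered) decorated Teichm\"uller space $\mathscr{T}^d_{\wh{S}}$ onto an open cell $\mathbb{R}_{>0}^{\#\{{\rm edges~of~}\tau\}}$, so $\mathscr{T}^d_{\wh{S}}$ is a contractible manifold. The convex-hull construction assigns to each decorated hyperbolic structure a canonical ideal cell decomposition of $\wh{S}$, and stratifying $\mathscr{T}^d_{\wh{S}}$ by the combinatorial type of this decomposition yields a real-analytic cell decomposition $\{C(\Delta)\}$ indexed by ideal cell decompositions $\Delta$, with $\dim C(\Delta)$ equal to the number of edges of $\Delta$. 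One then checks the combinatorics in low codimension: the top cells are the $C(\tau)$ with $\tau$ an ideal triangulation; each codimension-one cell comes from erasing one edge (merging two triangles into a quadrilateral) and is adjacent to exactly the two top cells given by the two diagonals of that quadrilateral, i.e.\ precisely to a flip; and each codimension-two cell comes from erasing two edges, for which there are exactly two possibilities — the erased edges bound two disjoint quadrilaterals (the commutativity relation), or together with one further edge they bound a pentagon (the pentagon relation). The involutivity relation $W_eW_e=\mathrm{id}$ is simply the statement that a codimension-one cell has exactly two adjacent top cells.

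The conclusion then follows from a duality/general-position lemma: form the dual $2$-complex with one vertex per top cell, one edge per codimension-one cell, and one $2$-cell per codimension-two cell, with attaching maps dictated by incidence; by the previous paragraph this dual is canonically the $2$-skeleton of $\mcal{C}_{Pt}(\wh{S})$ with its flip edges and its pentagon, commutativity and involutivity $2$-cells. Since $\mathscr{T}^d_{\wh{S}}$ is a contractible manifold, one argues that the fundamental group of this dual $2$-complex is trivial: realize a loop by a loop in $\mathscr{T}^d_{\wh{S}}$ transverse to, hence missing, the strata of codimension $\ge 2$; bound it by a disk; put the disk in general position with respect to the stratification so that it meets the codimension-two strata in isolated points and misses those of codimension $\ge 3$; and read off that the loop is a product of conjugates of boundaries of the $2$-cells. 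The part I expect to be the main obstacle is making the convex-hull cell decomposition and its codimension-$\le 2$ local structure precise for a \emph{bordered} marked surface — so that the only codimension-two degenerations are "double quadrilateral" and "pentagon" — together with the transversality input guaranteeing that a generic spanning disk avoids all strata of codimension $\ge 3$; both ingredients are available in \cite{Penner} and \cite{PennerBook} but constitute the technical heart of the statement. Granting them, $\mcal{C}_{Pt}(\wh{S})$ is connected and simply-connected, which is the assertion of the proposition.
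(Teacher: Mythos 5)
The paper itself offers no proof of Prop.~\ref{prop:Whitehead}; it simply credits Whitehead and points the reader to Penner's book \cite{PennerBook}. Your sketch is precisely the standard argument one finds there: contract $\mathscr{T}^d_{\wh{S}}$ via $\lambda$-lengths, stratify it by the combinatorial type of the Epstein--Penner convex-hull decomposition, identify the dual $2$-complex of the codimension-$\le 2$ strata with (the non-involutivity part of) $\mcal{C}_{Pt}(\wh{S})$, and push a nullhomotopy of a loop into general position. So you are taking the same route as the reference the paper relies on, and the outline is sound.

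One place I would want you to be a bit more careful before declaring victory is the claim that the only codimension-two degenerations are a pentagon or two disjoint quadrilaterals. That dichotomy is correct for ordinary pairs of diagonals, but an ideal triangulation of a bordered or punctured surface can contain self-folded triangles, and deleting two edges can also produce a single once-punctured bigon or other degenerate region that is not a polygon with five or fewer sides; likewise the flip $W_e$ is not defined for the inner edge of a self-folded triangle. These are exactly the sorts of configurations Penner has to treat separately, and they are part of what you already flagged as the technical heart, so this is a point to be explicit about rather than an error in strategy. A second, smaller point: your dual complex does not literally carry the involutivity $2$-cells of $\mcal{C}_{Pt}(\wh{S})$ as defined here; one should note that attaching a bigon along the path $W_eW_e$ does not change the fundamental group, since that loop is already nullhomotopic in the dual complex (it backtracks along one edge). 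With those caveats made precise, the argument does give the proposition.
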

We refer the readers to Bob. Penner's book \cite{PennerBook} for a proof and original references. Now, for a proof of Prop.\ref{prop:Kashaev_complex_simply-connected}, we mimic Funar-Kapoudjian's idea \cite{FuKa2} of using the following lemma of Bakalov-Kirillov \cite{BaKi}:

\begin{lemma}[Prop.6.2 of {\cite{BaKi}}]
\label{lem:BaKi}
Let $\mcal{M}$ and $\mcal{C}$ be two $CW$-complexes of dimension $2$, with oriented edges, and $f : \mcal{M}^{(1)} \to \mcal{C}^{(1)}$ be a cellular map between their $1$-skeletons, which is surjective on $0$-cells and $1$-cells. Suppose that:
\begin{enumerate}
\item $\mcal{C}$ is connected and simply-connected;

\item For each vertex $c$ of $\mcal{C}$, $f^{-1}(c)$ is connected and simply-connected in $\mcal{M}$ (that is, every closed loop which lies completely in $f^{-1}(c)$ is contractible in $\mcal{M}$);

\item Let $c_1 \stackrel{ e}{\longrightarrow} c_2$ be an oriented edge of $\mcal{C}$, and let $m_1' \stackrel{ e'}{\longrightarrow} m_2'$ and $m_1'' \stackrel{e''}{\longrightarrow} m_2''$ be two lifts in $\mcal{M}$. Then we can find two paths $m_1' \stackrel{ p_1 }{\longrightarrow} m_1''$ in $f^{-1}(c_1)$ and $m_2' \stackrel{ p_2}{ \longrightarrow } m_2''$ in $f^{-1}(c_2)$ such that the loop
$$
\xymatrix{
m_1' \ar[r]^{e'} \ar[d]_{p_1} & m_2' \ar[d]^{p_2} \\
m_1'' \ar[r]^{e''} & m_2''
}
$$
is contractible in $\mcal{M}$;

\item For any $2$-cell $X$ of $\mcal{C}$, its boundary $\partial X$ can be lifted to a contractible loop of $\mcal{M}$.
\end{enumerate}
Then $\mcal{M}$ is connected and simply-connected.
\end{lemma}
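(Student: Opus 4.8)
\emph{Strategy.} The plan is to prove this criterion by the standard technique of lifting a combinatorial null-homotopy one elementary move at a time, with hypotheses (1)--(4) supplying exactly the input needed at each step. Connectedness of $\mcal{M}$ is the easy half: given two vertices $m,m'$, pick by (1) an edge-path in $\mcal{C}$ from $f(m)$ to $f(m')$; each of its edges has \emph{some} lift in $\mcal{M}$ since $f$ is surjective on $1$-cells, and since the fibers $f^{-1}(c)$ are connected by (2) one can splice these lifts together with fiber-paths into a path in $\mcal{M}$ from $m$ to $m'$. All the work is in simple-connectedness, for which it suffices (as $\mcal{M}$ is connected) to show that every edge-loop $\gamma$ in $\mcal{M}$ based at a fixed vertex $m_0$ is contractible in $\mcal{M}$.

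\emph{A key lemma on path lifts.} First I would establish: if $\til p$ and $\til p_0$ are two lifts in $\mcal{M}$ of the same edge-path $p$ of $\mcal{C}$ (so their initial vertices lie over the same vertex of $\mcal{C}$, as do their terminal vertices), and $r$ is any path in $f^{-1}(\text{initial vertex of }p)$ joining the initial vertex of $\til p$ to that of $\til p_0$, then there is a path $r'$ in $f^{-1}(\text{terminal vertex of }p)$ joining the terminal vertices such that $\til p \cdot r' \simeq r \cdot \til p_0$ in $\mcal{M}$. This is proved by induction on the length of $p$: peel off its last edge $e$, note that the last edges of $\til p$ and $\til p_0$ are two lifts of $e$, apply hypothesis (3) to obtain fiber-paths forming a contractible square, and use that $f^{-1}(c)$ is simply-connected in $\mcal{M}$ (hypothesis (2)) --- so that any two paths in a fiber with common endpoints are homotopic in $\mcal{M}$ --- to glue the inductive fiber-path to the one produced by (3). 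This lemma is the place where hypotheses (2) and (3) get packaged.

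\emph{Lifting the null-homotopy.} Given $\gamma$, its image $f(\gamma)$ is an edge-loop in $\mcal{C}$ based at $c_0 = f(m_0)$; since $\mcal{C}$ is simply-connected by (1), there is a finite sequence of edge-loops $f(\gamma) = \delta_0,\delta_1,\dots,\delta_N = (\text{constant at }c_0)$ in which each $\delta_{i+1}$ arises from $\delta_i$ by an elementary move of one of two types: (a) insertion or deletion of a backtrack $e\,e^{-1}$, or (b) replacement of a subpath $p$ by a subpath $q$ with $\partial X = p\,q^{-1}$ for some $2$-cell $X$ of $\mcal{C}$. I would build edge-loops $\gamma_i$ in $\mcal{M}$ based at $m_0$ with $f(\gamma_i) = \delta_i$ and $\gamma_i \simeq \gamma$ in $\mcal{M}$, inductively. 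For a backtrack insertion: splice $r\,\til e\,\til e^{-1}\,r^{-1}$ into $\gamma_i$ at the relevant vertex, where $\til e$ is any lift of $e$ and $r$ a fiber-path to its initial vertex --- the spliced loop is contractible, so $\gamma_{i+1}\simeq\gamma_i$. For a backtrack deletion: the corresponding piece $\epsilon_1\epsilon_2$ of $\gamma_i$ gives two lifts of $e$ (one reversed), and hypothesis (3) together with contractibility of the intermediate fiber-loop (from (2)) lets me replace $\epsilon_1\epsilon_2$ by a single fiber-path, homotopic in $\mcal{M}$, so $f(\gamma_{i+1}) = \delta_{i+1}$ and $\gamma_{i+1}\simeq\gamma_i$. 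For a $2$-cell move: by hypothesis (4), $\partial X$ lifts to a contractible loop $\widetilde{\partial X} = \til p_0\,\til q_0^{-1}$ (so $\til p_0 \simeq \til q_0$ in $\mcal{M}$); apply the key lemma to the piece of $\gamma_i$ over $p$ and the lift $\til p_0$, and replace that piece by the lift of $q$ assembled from $\til q_0$ and the relevant fiber-paths --- using $\til p_0\simeq\til q_0$ this replacement is again a homotopy in $\mcal{M}$, with $f(\gamma_{i+1}) = \delta_{i+1}$. At the end, $\gamma_N$ lies entirely in $f^{-1}(c_0)$, hence is contractible by (2), and therefore $[\gamma] = [\gamma_N] = 1$ in $\pi_1(\mcal{M})$.

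\emph{Main obstacle.} The crux is that the hypotheses do \emph{not} give a path-lifting property in the naive sense: $f$ need only be surjective on cells, so neither $f(\gamma)$ nor the loop $\partial X$ can be lifted from a prescribed starting vertex. One is therefore forced to work with the lifts that are already available --- the relevant subpaths of the $\gamma_i$'s, and the single contractible lift of $\partial X$ handed over by (4) --- and to reconcile the various lifts that inevitably appear by inserting fiber-paths, using simple-connectedness of the fibers (2) to make these reconciliations canonical up to homotopy and hypothesis (3) to transport fiber-paths across single edges. Carrying out the bookkeeping and verifying, in each of the three move-types, that the modified loop $\gamma_{i+1}$ is genuinely homotopic to $\gamma_i$ \emph{in $\mcal{M}$} (and not merely that $f(\gamma_{i+1}) = \delta_{i+1}$ in $\mcal{C}$) is the technical heart; once the key lemma is in place, each individual verification is routine.
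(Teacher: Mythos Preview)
The paper does not give its own proof of this lemma: it is quoted verbatim as Prop.~6.2 of Bakalov--Kirillov \cite{BaKi} and then applied as a black box to the situation $\mcal{M}=\mcal{C}_{\mcal{K}}(\wh{S})$, $\mcal{C}=\mcal{C}_{Pt}(\wh{S})$. So there is nothing in the present paper to compare your argument against.

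That said, your proposal is the standard proof of this type of result and is correct as outlined. The connectedness half is immediate, and for simple-connectedness your ``key lemma'' (two lifts of a common edge-path are homotopic through a rectangle of fiber-paths, proved by induction on length using (3) for single edges and (2) to reconcile fiber-paths) is exactly the right packaging of hypotheses (2) and (3). Your treatment of the three elementary moves of a combinatorial null-homotopy in $\mcal{C}$ is accurate; in particular, the backtrack-deletion step correctly observes that the two edges $\epsilon_1,\epsilon_2$ give two lifts of the same oriented edge (one reversed), whence (3) and (2) replace $\epsilon_1\epsilon_2$ by a single fiber-path up to homotopy, and the $2$-cell step correctly uses the single contractible lift supplied by (4) together with the key lemma to swap one arc for the other. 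Your diagnosis of the ``main obstacle'' --- that $f$ need not have any path-lifting from a prescribed basepoint, so one must reconcile the available lifts via fiber-paths --- is exactly the point of the argument. This matches the proof in \cite{BaKi}.
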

\begin{remark}
The orientation of the edges are mainly for notational use.
\end{remark}

We will apply this lemma to
\begin{align}
\label{eq:our_situation}
\mcal{M} = \mcal{C}_{\mcal{K}}(\wh{S}) \quad\mbox{and} \quad \mcal{C} = \mcal{C}_{Pt}(\wh{S}).
\end{align}
Then the condition 1 of Lem.\ref{lem:BaKi} is satisfied because of Prop.\ref{prop:Whitehead}. We define the cellular map
\begin{align}
\label{eq:our_f}
f : \mcal{C}_{\mcal{K}}^{(1)}(\wh{S}) \to \mcal{C}_{Pt}^{(1)} (\wh{S})
\end{align}
between the $1$-skeletons by `forgetting the dots and labels'. The vertex $(\tau,D,L)$ is sent via $f$ to $\tau$. The edges of type $\rho_i$ and $P_\sigma$ emanating from $(\tau,D,L)$ are sent to the vertex $\tau$. The edge of type $\omega_{ij}$ connecting $(\tau,D,L)$ and $(\tau',D',L')$ is sent to the edge connecting $\tau$ and $\tau'$. Then the surjectivity on $0$-cells and $1$-cells is easily seen.

\vs

Now, to prove condition (2) of Lem.\ref{lem:BaKi} for our situation, fix any ideal triangulation $\tau$. One can observe that $f^{-1}(\tau)$ is a one-dimensional complex, whose vertices are $(\tau,D,L)$ for all possible dotting rules $D$ and labeling rules $L$ for $\tau$. It is easy to see that this is connected, because for any two vertices $(\tau,D,L)$ and $(\tau,D',L')$, there is an edge between $(\tau,D,L)$ and $(\tau,D,L')$, and a path between $(\tau,D,L')$ and $(\tau,D',L')$.  
We now need to show that any closed loop in $f^{-1}(\tau)$ is contractible in $\mcal{C}_{\mcal{K}}(\wh{S})$. 
Observe that some of the $2$-cells of $\mcal{C}_{\mcal{K}}(\wh{S})$ are attached to $f^{-1}(\tau)$: namely, the $2$-cells of type $\rho_i^3 = {\rm id}$, $P_{\rm id}={\rm id}$, $P_{\sigma_1} P_{\sigma_2} = P_{\sigma_1\circ \sigma_2}$, $P_\sigma \rho_i = \rho_{\sigma(i)} P_\sigma$, or $\rho_i \rho_j = \rho_j \rho_i$. Each of these relations is supported at every vertex of $f^{-1}(\tau)$, and the resulting $2$-cell is attached to $f^{-1}(\tau)$. Denote by $\mcal{M}_\tau$ the CW-subcomplex of $\mcal{C}_{\mcal{K}}(\wh{S})$ whose $1$-skeleton is $f^{-1}(\tau)$ and the $2$-cells are those just described. It suffices to show that $\mcal{M}_\tau$ is simply-connected, and for that we use Lem.\ref{lem:BaKi} again:

\begin{lemma}
\label{lem:M_tau_simply-connected}
The CW-complex $\mcal{M}_\tau$ defined above is simply-connected.
\end{lemma}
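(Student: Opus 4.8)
The plan is to exhibit $\mcal{M}_\tau$ as the Cayley $2$-complex of an explicit finitely presented group, which is automatically simply-connected; alternatively one can run Lem.\ref{lem:BaKi} a further time, but the direct identification is more transparent.

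First I would describe $\mcal{M}_\tau$ combinatorially. Write $N=|I|$ for the number of ideal triangles of $\tau$. A $0$-cell of $\mcal{M}_\tau$ is a dotted triangulation $(\tau,D,L)$ with underlying triangulation $\tau$; there are $3^N$ choices of dotting rule $D$ and $N!$ choices of labeling bijection $L$, so $\mcal{M}_\tau$ has exactly $3^N\cdot N!$ vertices. Its $1$-cells are precisely the edges of $\mcal{C}_{\mcal{K}}(\wh{S})$ of types $\rho_i$ ($i\in I$) and $P_\sigma$ ($\sigma$ a permutation of $I$), both of which are defined at every vertex and are invertible; and its $2$-cells are precisely those $2$-cells of $\mcal{C}_{\mcal{K}}(\wh{S})$ whose type is one of the five relations $\rho_i^3={\rm id}$, $P_{\rm id}={\rm id}$, $P_{\sigma_1}P_{\sigma_2}=P_{\sigma_1\circ\sigma_2}$, $P_\sigma\rho_i=\rho_{\sigma(i)}P_\sigma$, $\rho_i\rho_j=\rho_j\rho_i$ from \eqref{eq:new_elementary_moves_relations}. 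Each of these relations is supported at every vertex of $\mcal{M}_\tau$, so each is realized as a $2$-cell attached at every vertex.

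Next, let $F$ be the free group on the symbols $\rho_i$ ($i\in I$) and $P_\sigma$ ($\sigma\in\mathrm{Sym}(I)$). Since each elementary move is applicable to, and invertible at, every dotted triangulation, $F$ acts on the vertex set $V:=\mcal{M}_\tau^{(0)}$ by permutations, and a word in $F$, read off along the corresponding edge path from a fixed basepoint $v_0$, is a closed loop in $\mcal{M}_\tau^{(1)}$ exactly when it fixes $v_0$. This action is transitive: $P_{L'\circ L^{-1}}$ carries $(\tau,D,L)$ to $(\tau,D,L')$, and a suitable product of powers of the $\rho_i$ carries $(\tau,D,L')$ to $(\tau,D',L')$ (this is the connectedness argument already used for $f^{-1}(\tau)$). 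The five relations hold identically as transformations of dotted triangulations --- they are among the relations of \eqref{eq:new_elementary_moves_relations}, with no multiplicative ambiguity --- so the action factors through
\[
\Gamma := \langle\, \rho_i,\, P_\sigma \;\mid\; \rho_i^3=P_{\rm id}={\rm id},\; P_{\sigma_1}P_{\sigma_2}=P_{\sigma_1\circ\sigma_2},\; P_\sigma\rho_iP_\sigma^{-1}=\rho_{\sigma(i)},\; \rho_i\rho_j=\rho_j\rho_i \,\rangle,
\]
with $i,j$ ranging over distinct elements of $I$ and $\sigma,\sigma_1,\sigma_2$ over $\mathrm{Sym}(I)$. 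A routine normal-form argument --- push every $P$-letter to the right past the $\rho$-letters via $P_\sigma\rho_i=\rho_{\sigma(i)}P_\sigma$, merge the $P$'s into a single $P_\sigma$, and reduce the remaining $\rho$-word using commutativity and $\rho_i^3={\rm id}$ --- shows that every element of $\Gamma$ has the form $\rho_{1}^{a_1}\cdots\rho_{N}^{a_N}P_\sigma$ with $a_i\in\{0,1,2\}$ (after fixing an enumeration $I=\{1,\ldots,N\}$), so $|\Gamma|\le 3^N\cdot N!$. On the other hand $\Gamma$ acts transitively on $V$, so $|\Gamma|\ge|V|=3^N\cdot N!$. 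Hence $|\Gamma|=|V|$, and $\Gamma$ acts on $V$ freely and transitively.

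Finally, the orbit map $\Gamma\to V$, $g\mapsto g\cdot v_0$, is then a bijection of $\Gamma$-sets. Under it the $1$-cells of $\mcal{M}_\tau$ become all pairs (vertex, generator) and the $2$-cells become exactly the $\Gamma$-translates of the five relators above, so $\mcal{M}_\tau$ is isomorphic to the Cayley $2$-complex of the presentation defining $\Gamma$. The Cayley $2$-complex of any group presentation is connected and simply-connected --- it is the universal cover of the presentation complex --- so $\mcal{M}_\tau$ is simply-connected, which proves the lemma. I expect the only point requiring genuine care to be the bookkeeping of the first step: verifying that $\mcal{M}_\tau$ contains exactly the $\Gamma$-translates of the five relators (and no other $2$-cells), and that the derived action is actually free; everything else is elementary group theory. (In the alternative route via Lem.\ref{lem:BaKi}, with target the Cayley complex of $\mathrm{Sym}(I)$ obtained by forgetting the dots, the nontrivial point is condition (3), which holds because conjugating the dot-correcting $\rho$-word by $P_\sigma$ merely permutes its indices, a move available among the $2$-cells of $\mcal{M}_\tau$.)
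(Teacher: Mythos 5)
Your proof is correct and takes a genuinely different route from the paper's. The paper re-applies Lem.\ref{lem:BaKi} a second time: it introduces a smaller target complex $\mcal{C}_\tau$ (vertices $(\tau,D,L_0)$ with a fixed labelling $L_0$, edges the $\rho_i$'s, and $2$-cells the $\rho_i^3$ triangles) together with the ``forget $L$'' map $f_\tau$, and then checks the four conditions of Lem.\ref{lem:BaKi}, with the $P$-cells handling the fibres and the edge-square lifts, and the $\rho_i^3$ cells handling the base $2$-cells. You instead identify $\mcal{M}_\tau$ outright with the Cayley $2$-complex of the finite group $\Gamma\cong(\mathbb{Z}/3\mathbb{Z})^N\rtimes\mathrm{Sym}(I)$, which is automatically the simply-connected universal cover of its presentation complex. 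Your route is arguably the cleaner one here, and it sidesteps a wrinkle in the paper's argument: as defined, $\mcal{C}_\tau$ carries only $\rho_i^3$ triangle $2$-cells, yet the paper's word-reduction proof of its simple connectivity also invokes $\rho_i\rho_j=\rho_j\rho_i$; for the reduction to effect a homotopy inside $\mcal{C}_\tau$ one must in addition adjoin the commutativity squares (after which $\mcal{C}_\tau$ is itself the Cayley complex of $(\mathbb{Z}/3\mathbb{Z})^N$). The bookkeeping you flag at the end is indeed routine and suffices to close the argument: every $\rho_i$ and $P_\sigma$ is applicable at every vertex and $P_{\rm id}$ is the only move fixing a vertex, so the edges of $\mcal{M}_\tau$ biject with (vertex, generator) pairs; each of the five relations is supported at every vertex, so its $2$-cells biject with (vertex, relator) pairs; and your normal-form bound $|\Gamma|\le 3^N N!$ together with the transitivity bound $|\Gamma|\ge|V|=3^N N!$ forces the $\Gamma$-action on $V$ to be simply transitive, pinning down the Cayley identification.
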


This lemma will be proved later. Assuming it, we then have condition (2) of Lem.\ref{lem:BaKi} for our situation \eqref{eq:our_situation}, \eqref{eq:our_f}. It takes a few pages to check condition (3), so it is postponed until Lem.\ref{lem:condition3_for_our_situation}. Let us first check condition (4) here.

\vs

Recall that there are three types of $2$-cells of $\mcal{C}_{Pt} (\wh{S})$. It is easy to see that the pentagon type $2$-cell of $\mcal{C}_{Pt} (\wh{S})$ is lifted by the $2$-cell of $\mcal{C}_{\mcal{K}}(\wh{S})$ of type $\omega_{jk} \omega_{ik} \omega_{ij}  =  \omega_{ij} \omega_{jk}$, where in the lift we chose the labels of the three triangles involved in the flips to be $i,j,k$ appropriately. The details of a proof is omitted. The commutativity type $2$-cell of $\mcal{C}_{Pt} (\wh{S})$ is lifted by the $2$-cell of $\mcal{C}_{\mcal{K}}$ of type $\omega_{ij} \omega_{k\ell} = \omega_{k\ell} \omega_{ij}$, where in the lift we chose the labels of the two triangles having the first edge $e_1$ in the relation $W_{e_1} W_{e_2} = W_{e_2} W_{e_1}$  as their shared side as $i,j$ and those of the two triangles for the second edge as $k,\ell$. For the involutivity type $2$-cell of $\mcal{C}_{Pt} (\wh{S})$, we lift its boundary to the loop $\omega_{ij}^{-1} \omega_{ij}$ in $\mcal{C}_{\mcal{K}}(\wh{S})$, which is contractible.

\vs

It remains to prove Lem.\ref{lem:M_tau_simply-connected}, and to check condition (3) for our situation \eqref{eq:our_situation}, \eqref{eq:our_f}.

\vs

{\it Proof of Lem.\ref{lem:M_tau_simply-connected}.} Define a new two-dimensional CW-complex $\mcal{C}_\tau$ as follows. Choose and fix one labeling rule $L_0$. The vertices of $\mcal{C}_\tau$ are $(\tau,D,L_0)$ for all possible dotting rule $D$ for $\tau$. Two vertices are connected by an edge if and only if one vertex is obtained from the other by applying $\rho_i$ for some $i\in I$, where $I$ is the fixed index set of ideal triangles. Then it is not hard to see that $\mcal{C}_\tau$ is connected. For any three distinct vertices which are connected by $\rho_i$ for the same $i$, we attach a $2$-cell so that its boundary is identified with the triangle formed by the three edges going among these vertices. These are all the $2$-cells. Now, suppose that we have a closed loop in $\mcal{C}_\tau$ starting from $(\tau,D,L_0)$. We choose an orientation of this loop, and write down the types of the edges as traversing along the loop once, to get a word $\rho^{\epsilon_\ell}_{i_\ell} \cdots \rho^{\epsilon_2}_{i_2} \rho^{\epsilon_1}_{i_1}$ (written from right). Here $i_j \in I$, $\epsilon_j \in \{\pm 1\}$, where $i_1,\ldots,i_\ell$ may not be mutually distinct. Using the commutativity relations $\rho_i \rho_j = \rho_j \rho_i$ and the relations $\rho_i^3=id$, we can write this as $\rho^{\epsilon_{\ell'}'}_{k_{\ell'}} \cdots \rho^{\epsilon_2'}_{k_2} \rho^{\epsilon_1'}_{k_1}$, where $k_1,\ldots,k_{\ell'}$ are mutually distinct elements of $I$, and $\epsilon_1',\ldots,\epsilon_{\ell'}'$ are elements of $\{\pm 1\}$. This means that the loop corresponding to this new word is homotopic  in $\mcal{C}_\tau$ to the original loop. When this word is applied to $(\tau,D,L_0)$ (i.e. apply $\rho^{\epsilon_1'}_{k_1}$ first, then $\rho^{\epsilon_2'}_{k_2}$, etc), then we should get back $(\tau,D,L_0)$, and the only way for this to happen is that $\ell'=0$, because the dot of the triangle labeled by $k_j$ should not be altered. So the new loop is a constant loop, and thus we proved that $\mcal{C}_\tau$ is simply-connected. Hence the condition (1) of Lem.\ref{lem:BaKi} is satisfied.

\vs

The map $f_\tau : \mcal{M}_\tau^{(1)} \to \mcal{C}_\tau^{(1)}$ is given by `setting the labels to be $L_0$',  i.e. the vertex $(\tau,D,L)$ is sent to $(\tau,D,L_0)$, the edge connecting $(\tau,D,L)$ and $(\tau,D,L')$ is sent to the vertex $(\tau,D,L_0)$, and the edge connecting $(\tau,D,L)$ and $(\tau,D',L)$ is sent to the edge connecting $(\tau,D,L_0)$ and $(\tau,D',L_0)$. For each vertex $(\tau,D,L_0)$ of $\mcal{C}_\tau^{(1)}$, its pre-image under $f_\tau$ consists of vertices $(\tau,D,L)$ for all possible labeling $L$ for $\tau$, and edges going between every pair of vertices $(\tau,D,L)$ and $(\tau,D,L')$. So $f_\tau^{-1}(\tau,D,L_0)$ is connected. We must show that any loop in $f_\tau^{-1}(\tau,D,L_0)$ is contractible in $\mcal{M}_\tau$. We first collapse each subcomplex of $\mcal{M}_\tau$ that is the closure of the a $2$-cell of type $P_{\rm id}={\rm id}$, which is contractible. By abuse of notation, still denote by $f_\tau^{-1}(\tau,D,L_0)$ and $\mcal{M}_\tau$ after this collapsing; then each edge of $f_\tau^{-1}(\tau,D,L_0)$ connects two distinct vertices. For any three distinct vertices of $f_\tau^{-1}(\tau,D,L_0)$, the three edges going among them form a triangle, and in $\mcal{M}_\tau$ a $2$-cell of type $P_{\sigma_1} P_{\sigma_2} = P_{\sigma_1 \circ \sigma_2}$ is attached so that its boundary is identified with this triangle.  One can observe that the CW-subcomplex of $\mcal{M}_\tau$ consisting of $f_\tau^{-1}(\tau,D,L_0)$ together with these $2$-cells is just the $2$-skeleton of the $(k-1)$-simplex, where $k$ is the number of vertices of $f_\tau^{-1}(\tau,D,L_0)$. Since $(k-1)$-simplex is contractible, any loop inside $f_\tau^{-1}(\tau,D,L_0)$ is contractible in $\mcal{M}_\tau$, fulfilling the condition (2) of Lem.\ref{lem:BaKi}.

\vs

Now, let $(\tau,D,L_0) \to (\tau,D',L_0)$ be an edge in $\mcal{C}_\tau$, and $(\tau,D,L) \to (\tau,D',L)$ and $(\tau,D,L') \to (\tau,D',L')$ be two lifts of it in $\mcal{M}_\tau$; it is easy to see that any lift looks like them, because $D$ and $D'$ are distinct. Since $(\tau,D,L_0) \to (\tau,D',L_0)$ is an edge in $\mcal{C}_\tau$, there is $i_0 \in I$ such that $\rho_{i_0}^{\epsilon} (\tau,D,L_0) = (\tau,D',L_0)$, where $\epsilon \in \{\pm 1\}$. Let $t$ be the ideal triangle of $\tau$ labeled by $i_0$ by $L_0$. Now let $i = L(t)$ and $i'=L'(t)$. Then we have $\rho_i^\epsilon (\tau,D,L) = (\tau,D',L)$ and $\rho_{i'}^\epsilon(\tau,D,L') = (\tau,D',L')$. Meanwhile, we can find a unique permutation $\sigma$ of $I$ such that $L' = \sigma \circ L$; in particular, $i' = \sigma(i)$. Then the edges $(\tau,D,L) \to (\tau,D,L')$ of $f_\tau^{-1}(\tau,D,L_0)$ and $(\tau,D',L) \to (\tau, D',L')$ of $f_\tau^{-1}(\tau,D',L_0)$ are both labeled by the elementary move $P_\sigma$. Thus we have the loop
$$
\xymatrix{
(\tau,D,L) \ar[r]^{\rho_i^\epsilon} \ar[d]_{P_\sigma} & (\tau,D',L) \ar[d]^{P_\sigma} \\
(\tau,D,L') \ar[r]^{\rho_{i'}^\epsilon} & (\tau,D',L')
}
$$
This loop is contractible in $\mcal{M}_\tau$ because there is a $2$-cell of type $P_\sigma \rho_i = \rho_{\sigma(i)} P_\sigma$ attached to it along the boundary. Hence the condition (3) of Lem.\ref{lem:BaKi}.

\vs

Any $2$-cell of $\mcal{C}_\tau$ is attached to the triangle with vertices $(\tau,D,L_0)$, $\rho_i (\tau,D,L_0) = (\tau,D',L_0)$, $\rho_i^2 (\tau,D,L_0) = (\tau,D'',L_0)$, with edges labeled by $\rho_i$ going among them, for some $D$ and $i\in I$. These vertices and edges can be thought of as living in $f_\tau^{-1}(\mcal{C}^{(1)}_\tau) \subset \mcal{M}_\tau$, and they are the lifts of the original ones in $\mcal{C}_\tau$, via $f_\tau$. The loop formed by the edges of this triangle
$$
\xymatrix{
(\tau,D,L_0) \ar[rr]^{\rho_i} & &(\tau,D',L_0) \ar[dl]^{\rho_i} \\
& (\tau,D'',L_0) \ar[ul]^{\rho_i} &
}
$$
is contractible in $\mcal{M}_\tau$ because there is a $2$-cell of type $\rho_i^3 ={\rm id}$ attached to it along the boundary. Hence the condition (4) of Lem.\ref{lem:BaKi}. Therefore, by Lem.\ref{lem:BaKi}, the CW-complex $\mcal{M}_\tau$ is simply connected, as desired. \qed

\vs

We then finish our proof of Prop.\ref{prop:Kashaev_complex_simply-connected} by the following:

\begin{lemma}\label{lem:condition3_for_our_situation}
The condition (3) of Lem.\ref{lem:BaKi} is fulfilled, for our situation \eqref{eq:our_situation}, \eqref{eq:our_f}.
\end{lemma}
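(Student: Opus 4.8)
The plan is to verify condition~(3) of Lemma~\ref{lem:BaKi} directly, for the cellular map $f : \mcal{C}_{\mcal{K}}^{(1)}(\wh{S}) \to \mcal{C}_{Pt}^{(1)}(\wh{S})$ of \eqref{eq:our_f}. Fix an oriented edge $c_1 = \tau \stackrel{W_e}{\longrightarrow} \tau' = c_2$ of $\mcal{C}_{Pt}(\wh{S})$; it is the flip along an edge $e$ of $\tau$, and we write $t_1, t_2$ for the two ideal triangles of $\tau$ incident to $e$. Since $f$ collapses every edge of type $\rho_i$ or $P_\sigma$ to a vertex, any lift of $W_e$ in $\mcal{C}_{\mcal{K}}(\wh{S})$ is an edge of type $\omega_{ij}$; and such an edge lifts $W_e$ exactly when $\{t_1,t_2\}$ carry the labels $\{i,j\}$ and the dots on $t_1 \cup t_2$ are in the (unique) configuration making $\omega_{ij}$ applicable, the labels and dots of the remaining triangles being arbitrary and untouched by $\omega_{ij}$. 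Consequently, two lifts $e' : m_1' \to m_2'$ and $e'' : m_1'' \to m_2''$ of $W_e$ can differ only in: (a) the labels and dots of the triangles away from $t_1 \cup t_2$; and (b) the labels and dots carried by $t_1,t_2$, i.e.\ whether $e'$ is an $\omega_{ij}$-edge or an $\omega_{ji}$-edge, together with the accompanying dot configuration on $t_1 \cup t_2$.

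I would then assemble the required square as a concatenation of elementary squares, each bounding a $2$-cell of $\mcal{C}_{\mcal{K}}(\wh{S})$. For discrepancy~(a): if $m_1'$ and $m_1''$ differ by one dot change $\rho_u$ with $u$ distinct from the labels of $t_1,t_2$ (resp.\ by a label permutation $P_\sigma$ fixing those two labels), then $m_2'$ and $m_2''$ differ by the same $\rho_u$ (resp.\ $P_\sigma$), and the square so formed is the boundary of a commutativity $2$-cell $\omega_{ij}\rho_u = \rho_u\omega_{ij}$ (resp.\ an index-change $2$-cell $P_\sigma\omega_{ij} = \omega_{\sigma(i)\sigma(j)}P_\sigma$) from \eqref{eq:new_elementary_moves_relations}. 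Composing such squares along a path of elementary moves joining $m_1'$ to $m_1''$ inside $f^{-1}(\tau)$ — such a path exists since $f^{-1}(\tau)$ is connected, as observed in the proof of Lemma~\ref{lem:M_tau_simply-connected} — reduces us to the case where $m_1'$ and $m_1''$ agree away from $t_1 \cup t_2$; and after a further $P_{(ij)}$-square, using $P_{(ij)}\omega_{ij} = \omega_{ji}P_{(ij)}$, we may assume moreover that they assign the labels $\{i,j\}$ to $t_1,t_2$ in the same way, so that only discrepancy~(b) with fixed labeling remains.

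It then remains to handle the case in which $m_1'$ is the source of an $\omega_{ij}$-edge and $m_1''$ the source of an $\omega_{ji}$-edge for the same underlying labeling, so that $m_1'$ and $m_1''$ differ only in the dots on $t_1 \cup t_2$, and likewise $m_2',m_2''$ on the $\tau'$-side. Here the \emph{consistency} relation $\rho_j\omega_{ij}\rho_i = \rho_i\omega_{ji}\rho_j$ (the $\omega$-form of $\til{A}_t\til{T}_{ts}\til{A}_s = \til{A}_s\til{T}_{st}\til{A}_t$) and the \emph{inversion} relation $\omega_{ij}\rho_j\omega_{ji} = \rho_i\rho_j P_{(ij)}$ of \eqref{eq:new_elementary_moves_relations} supply the paths of dot changes $\rho_i,\rho_j$ — together with the stray $P_{(ij)}$ in the inversion case — joining $m_1'$ to $m_1''$ in $f^{-1}(\tau)$ and $m_2'$ to $m_2''$ in $f^{-1}(\tau')$, arranged so that the resulting square decomposes into the corresponding $2$-cells of $\mcal{C}_{\mcal{K}}(\wh{S})$, any leftover loop being a loop of $\mcal{M}_\tau$ or of $\mcal{M}_{\tau'}$ and hence contractible by Lemma~\ref{lem:M_tau_simply-connected}.

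The step I expect to be the main obstacle is this last one: one must carry out an explicit finite picture-chase, tracking the dots through the three corners of each of $t_1,t_2$ and through the two triangles produced by the flip, to confirm that the sequences of moves dictated by the consistency and inversion relations really do carry the source and target of an $\omega_{ij}$-lift to those of an $\omega_{ji}$-lift, and to identify the residual loop as one living entirely inside $\mcal{M}_\tau$ (resp.\ $\mcal{M}_{\tau'}$). Once this is verified, condition~(3) of Lemma~\ref{lem:BaKi} holds for the situation \eqref{eq:our_situation}, \eqref{eq:our_f}; combined with conditions~(1), (2), (4) established earlier, Lemma~\ref{lem:BaKi} yields that $\mcal{C}_{\mcal{K}}(\wh{S})$ is connected and simply-connected, completing the proof of Proposition~\ref{prop:Kashaev_complex_simply-connected}.
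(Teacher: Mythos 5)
Your outline captures the correct overall strategy — use commutativity and index-change $2$-cells to match the two lifts on the triangles not involved in the flip, then reduce the remaining discrepancy to the two special triangles and kill it with the consistency, inversion, and index-change $2$-cells, invoking Lemma~\ref{lem:M_tau_simply-connected} for leftover loops lying over a single vertex. This is the same decomposition the paper carries out: it first writes $L = \sigma\circ L_0$ and applies a single $P_\sigma$-square, then a sequence of $\rho_u$-squares ($u$ not a special-triangle label), reducing to the case where the two lifts agree on the generic triangles.

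However, there is a genuine gap in your characterization of the remaining discrepancy, and it is precisely where the paper does most of its work. You describe discrepancy~(b) as ``whether $e'$ is an $\omega_{ij}$-edge or an $\omega_{ji}$-edge,'' and your final step is a single verification that the consistency/inversion relations ``carry the source and target of an $\omega_{ij}$-lift to those of an $\omega_{ji}$-lift.'' But a lift of the oriented edge $W_e:\tau\to\tau'$ need not be the forward orientation of an $\omega$-move: it may equally well be the backward orientation, i.e.\ of type $\omega_{i_1j_1}^{-1}$ or $\omega_{j_1i_1}^{-1}$, where the $\omega$-move in question is applied on the $\tau'$ side. The paper thus faces a four-way case analysis (its Cases 1–4). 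Case 2 ($\omega_{j_1i_1}$) uses the consistency relation as you suggest; but Cases 3 and 4 ($\omega_{i_1j_1}^{-1}$ and $\omega_{j_1i_1}^{-1}$) use the inversion relation, and Case 3 in fact requires combining an inversion $2$-cell \emph{and} an index-change $2$-cell to build the right square. You do name the inversion relation and hint at a ``stray $P_{(ij)}$,'' but since you only acknowledge two cases rather than four, your proposed picture-chase would not terminate in all situations. Beyond this, you explicitly defer the final verification (``the main obstacle''); that explicit chase — tracking dot configurations through Fig.~\ref{fig:omega1}–\ref{fig:omega4} and writing down a concrete square of $2$-cells in each of the four cases — is the bulk of the paper's proof and cannot be waved away.
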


\begin{proof} The temporary variables $k$'s and $n$'s appearing in this proof have nothing to do with the ones describing $\wh{S}$ in \S\ref{subsec:higher_Teichmuller_spaces}. Let $W_e : \tau \to \tau'$ be an oriented edge in $\mcal{C}_{Pt}(\wh{S})$, i.e. a flip of an ideal triangulation. Any lift of this edge in $\mcal{C}_{\mcal{K}}(\wh{S})$ via $f$ \eqref{eq:our_f} is an oriented edge of type $\omega_{ij}^\epsilon$ for $\epsilon \in \{\pm1\}$. Let
$$
(\tau,D,L) \stackrel{ \omega_{ij}^\epsilon }{ \longrightarrow } (\tau', D', L') \quad\mbox{and}\quad 
(\tau,D_0,L_0) \stackrel{ \omega_{i_0 j_0}^{\epsilon_0} }{ \longrightarrow } (\tau', D_0', L_0')
$$
be two lifts of this edge in $\mcal{C}_{\mcal{K}}(\wh{S})$. We claim that we can find edges $(\tau,D_k,L_k) \stackrel{ \omega_{i_k j_k}^{\epsilon_k} }{ \longrightarrow } (\tau', D_k', L_k')$ for $k=1,2,\ldots,n$, so that the edge for $k=n$ coincides with the edge $(\tau,D,L) \stackrel{ \omega_{ij}^\epsilon }{ \longrightarrow } (\tau', D', L')$, and that there exist paths $(\tau,D_{k-1},L_{k-1}) \stackrel{ p_k }{ \longrightarrow } (\tau,D_k,L_k)$ in $f^{-1}(\tau)$ and $(\tau',D_{k-1}',L_{k-1}') \stackrel{ p_k' }{\longrightarrow} (\tau',D_k',L_k')$ in $f^{-1}(\tau')$, where $p_k$ and $p_k'$ are some words in elementary moves and their inverses, such that the loop
$$
\xymatrix{
(\tau,D_{k-1},L_{k-1}) \ar[rr]^{\omega_{i_{k-1} j_{k-1}}^{\epsilon_{k-1}} } \ar[d]_{p_k} & & (\tau', D_{k-1}', L_{k-1}') \ar[d]^{p_k'} \\
(\tau,D_k,L_k) \ar[rr]^{\omega_{i_k j_k}^{\epsilon_k} } & & (\tau', D_k', L_k')
}
$$
is contractible in $\mcal{C}_{\mcal{K}}(\wh{S})$. Then, combining these $n$ contractible loops, we obtain a loop
$$
\xymatrix{
(\tau,D_0,L_0) \ar[rr]^{\omega_{i_0 j_0}^{\epsilon_0} } \ar[d]_{p_n \cdots p_2 p_1} & & (\tau', D_0', L_0') \ar[d]^{p_n'  \cdots p_2'  p_1'} \\
(\tau,D,L) \ar[rr]^{\omega_{i j}^{\epsilon} } & & (\tau', D', L')
}
$$
which is contractible in $\mcal{C}_{\mcal{K}}(\wh{S})$, where $(\tau,D_0,L_0) \stackrel{ p_n \cdots p_1 }{ \longrightarrow } (\tau,D,L)$ is a path in $f^{-1}(\tau)$ and $(\tau', D_0', L_0') \stackrel{ p_n'  \cdots p_1' }{ \longrightarrow } (\tau', D', L')$ is a path in $f^{-1}(\tau')$, thus fulfilling condition (3) of Lem.\ref{lem:BaKi}.

\vs

Without loss of generality, we may assume $\epsilon_0=1$. Among the ideal triangles of $\tau$ and $\tau'$, call the ones that are not involved in the flip $\tau\to \tau'$ {\em generic triangles}, and the ones that are involved in the flip {\em special triangles}. So, for each of $\tau$ and $\tau'$, there are two special triangles. The generic triangles are ideal triangles for both $\tau$ and $\tau'$.

\vs

First, we write $L = \sigma \circ L_0$ for some permutation $\sigma$ of $I$. Observe that $\omega_{ij}^{\pm 1}$ does not change the labeling of the generic ideal triangles, so the two rules $L'$ and $\sigma \circ L_0'$ may be different only for the special ideal triangles. Define
$$
(\tau,D_1,L_1) := P_\sigma(\tau,D_0,L_0) = (\tau,D_0, L) \quad\mbox{and}\quad
(\tau',D_1',L_1') := P_\sigma(\tau',D_0',L_0') = (\tau', D_0', \sigma \circ L_0'),
$$
Then, for $i_1 := \sigma(i_0)$ and $j_1 := \sigma(j_0)$, the loop
$$
\xymatrix{
(\tau,D_0,L_0) \ar[rr]^{\omega_{i_0 j_0} } \ar[d]_{P_\sigma} & & (\tau', D_0', L_0') \ar[d]^{P_\sigma} \\
(\tau,D_1,L_1) = (\tau,D_0,L) \ar[rr]^-{\omega_{i_1 j_1} } & & (\tau', D_1', L_1') = (\tau',D_0',L_1')
}
$$
exists and is contractible in $\mcal{C}_{\mcal{K}}(\wh{S})$, because it is the boundary of the $2$-cell of type $P_\sigma \omega_{i_0 j_0} = \omega_{\sigma(i_0) \sigma(j_0)} P_\sigma$.

\vs

Now, note that $D$ and $D'$ are same on all generic triangles. Likewise for $D_0$ and $D_0'$. By applying a sequence of some $\rho_u$'s to $(\tau,D_0,L)$, we can get $(\tau,D,L)$. From this sequence, erase the ones involving the special triangles. Then we get
$$
\rho_{u_\ell}^{a_\ell} \cdots \rho_{u_1}^{a_1} (\tau,D_0,L) =: (\tau, \til{D}_0, L),
$$
where $\til{D}_0$ and $D$ are same on the generic triangles, where $u_1,\ldots,u_\ell$ are mutually distinct elements of $I$, none of them are $i_1$ or $j_1$ (i.e. the $L$-labels for the special triangles), and $a_1,\ldots,a_\ell \in \{\pm 1\}$. If we write
$$
\rho_{u_\ell}^{a_\ell} \cdots \rho_{u_1}^{a_1} (\tau', D_0', L_1') =: (\tau', \til{D}_0', L_1'),
$$
then we can show that $\til{D}_0'$ and $D'$ are the same on all generic triangles. First, observe that $L$ and $L_1'$, as well as $D_0$ and $D_0'$, are same on the generic triangles. By applying the dot change $\rho_{u_\ell}^{a_\ell} \cdots \rho_{u_1}^{a_1}$ to $D_0$ with respect to the labeling $L$, we get $\til{D}_0$, which is same as $D$ on generic triangles. Thus, the dotting rule $\til{D}_0'$, obtained by applying the dot change $\rho_{u_\ell}^{a_\ell} \cdots \rho_{u_1}^{a_1}$ to $D_0'$ with respect to the labeling $L_1'$, is same as $D$ on the generic triangles. Since $D$ and $D'$ are the same on the generic triangles, we get the desired conclusion.

\vs

Define
$$
(\tau, D_2, L_2) := \rho_{u_1}^{a_1} (\tau,D_1,L_1) = \rho_{u_1}^{a_1} (\tau,D_0,L), \quad
(\tau',D_2',L_2') := \rho_{u_1}^{a_1} (\tau',D_1',L_1') = \rho_{u_1}^{a_1}  (\tau',D_0',L_1').
$$
Then the loop
$$
\xymatrix{
(\tau,D_1,L_1) \ar[rr]^{\omega_{i_1 j_1} } \ar[d]_{\rho_{u_1}^{a_1}} & & (\tau', D_1', L_1') \ar[d]^{\rho_{u_1}^{a_1}} \\
(\tau,D_2,L_2) = \rho_{u_1}^{a_1}(\tau,D_0,L) \ar[rr]^-{\omega_{i_1 j_1} } & & (\tau', D_2', L_2') = \rho_{u_1}^{a_1}(\tau',D_0',L_1')
}
$$
exists and is contractible in $\mcal{C}_{\mcal{K}}(\wh{S})$, because it is the boundary of the $2$-cell of type $\omega_{i_1j_1} \rho_{u_1} = \rho_{u_1} \omega_{i_1j_1}$. Likewise, we define
$$
(\tau, D_{k+1},L_{k+1}) := \rho_{u_k}^{a_k} (\tau,D_k,L_k), \qquad
(\tau', D_{k+1}',L_{k+1}') := \rho_{u_k}^{a_k} (\tau',D_k',L_k')
$$
for each $k=1,2,\ldots,\ell$, and each time the similar loop exists and is contractible for a $2$-cell of similar type. We end up with the edge
$$
\xymatrix{
(\tau, D_{\ell+1},L_{\ell+1}) = (\tau, \til{D}_0,L) 
\ar[rr]^-{ \omega_{i_1 j_1} } &&
(\tau', D_{\ell+1}',L_{\ell+1}') = (\tau',\til{D}_0',L_1'),
}
$$
where $\til{D}_0$ is same as $D$ on generic triangles and as $D_0$ on special triangles, $\til{D}_0'$ is same as $D'$ on generic triangles, and $L_1'$ is same as $L$ on the generic triangles and hence also as $L'$ on generic triangles. So we finally have to form a loop
\begin{align}
\label{eq:loop_to_be_contractible}
\begin{array}{c}
\xymatrix{
(\tau, \til{D}_0,L) 
\ar[rr]^-{ \omega_{i_1 j_1} } \ar[d]_{p_{\ell+2}} & &
(\tau',\til{D}_0',L_1') \ar[d]^{p_{\ell+2}'} \\
(\tau, D,L) \ar[rr]^-{ \omega_{ij}^\epsilon } & & (\tau',D',L')
}
\end{array}
\end{align}
and prove that it is contractible in $\mcal{C}_{\mcal{K}}(\wh{S})$. We can now concentrate on the special triangles.

\vs

\begin{figure}[htbp!]
\includegraphics[width=55mm]{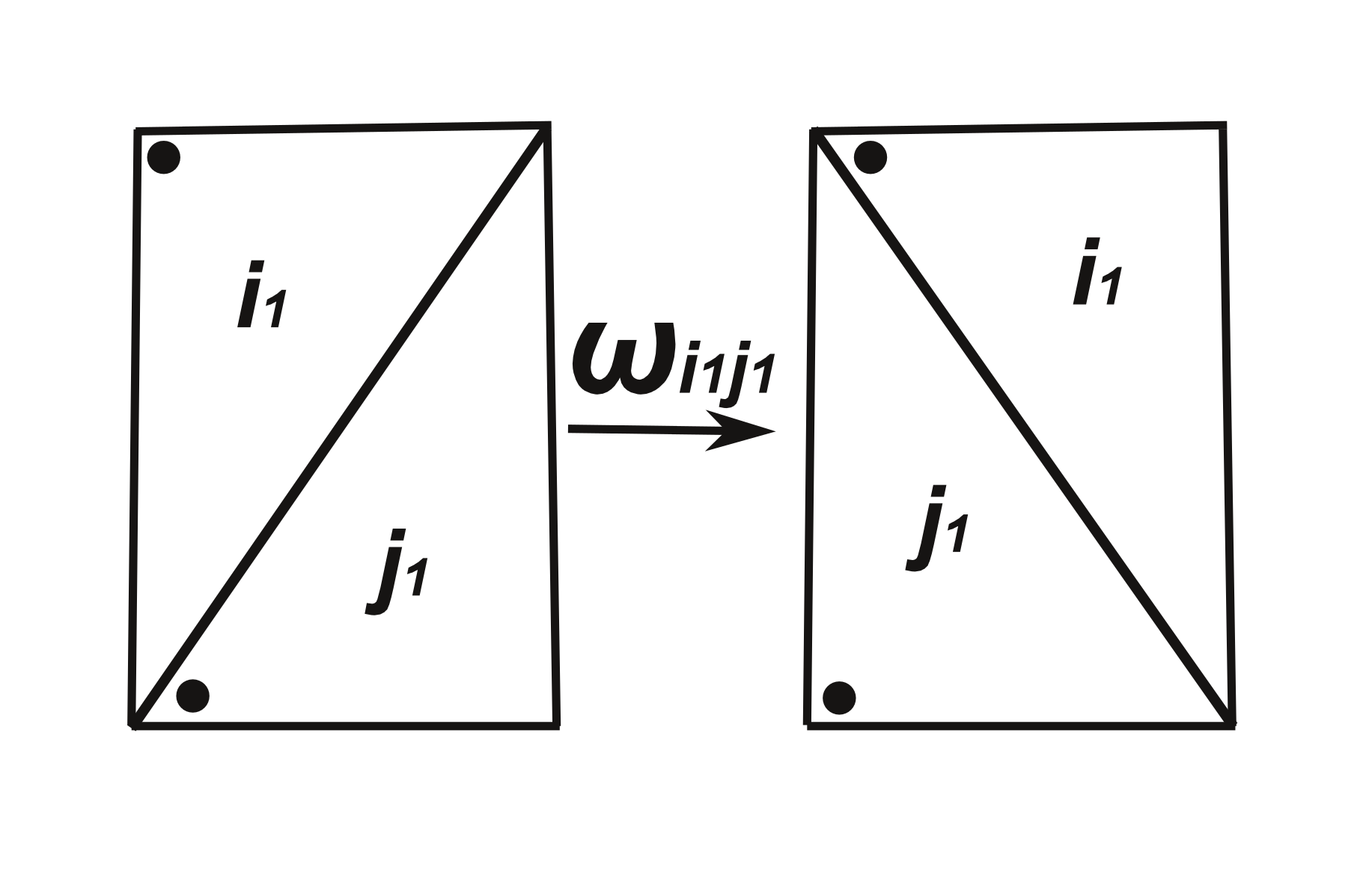}
\vspace{-7mm}
\caption{picture for $\omega_{i_1 j_1}$, case 1 for $\omega_{ij}^\epsilon$}
\label{fig:omega1}
\end{figure}

\vspace{-2mm}

What happens to the special triangles for the top horizontal edge $(\tau, \til{D}_0,L) \stackrel{ \omega_{i_1 j_1} }{\longrightarrow} (\tau',\til{D}_0',L_1')$ of \eqref{eq:loop_to_be_contractible} is determined as in Fig.\ref{fig:omega1}. 
Then, one can see that there are only four possibilities for the edge $(\tau,D,L) \stackrel{\omega_{ij}^\epsilon}{\longrightarrow} (\tau',D',L')$.

\vs

{\em Case 1.} The edge $(\tau,D,L) \stackrel{\omega_{ij}^\epsilon}{\longrightarrow} (\tau',D',L')$ is as in Fig.\ref{fig:omega1}. Then the edges $(\tau, \til{D}_0,L) \stackrel{ \omega_{i_1 j_1} }{\longrightarrow} (\tau',\til{D}_0',L_1')$ and $(\tau,D,L) \stackrel{\omega_{ij}^\epsilon}{\longrightarrow} (\tau',D',L')$ coincide with each other, so end of proof.

\begin{figure}[htbp!]
\begin{subfigure}[b]{0.34\textwidth}
\hspace{-3mm} \includegraphics[width=55mm]{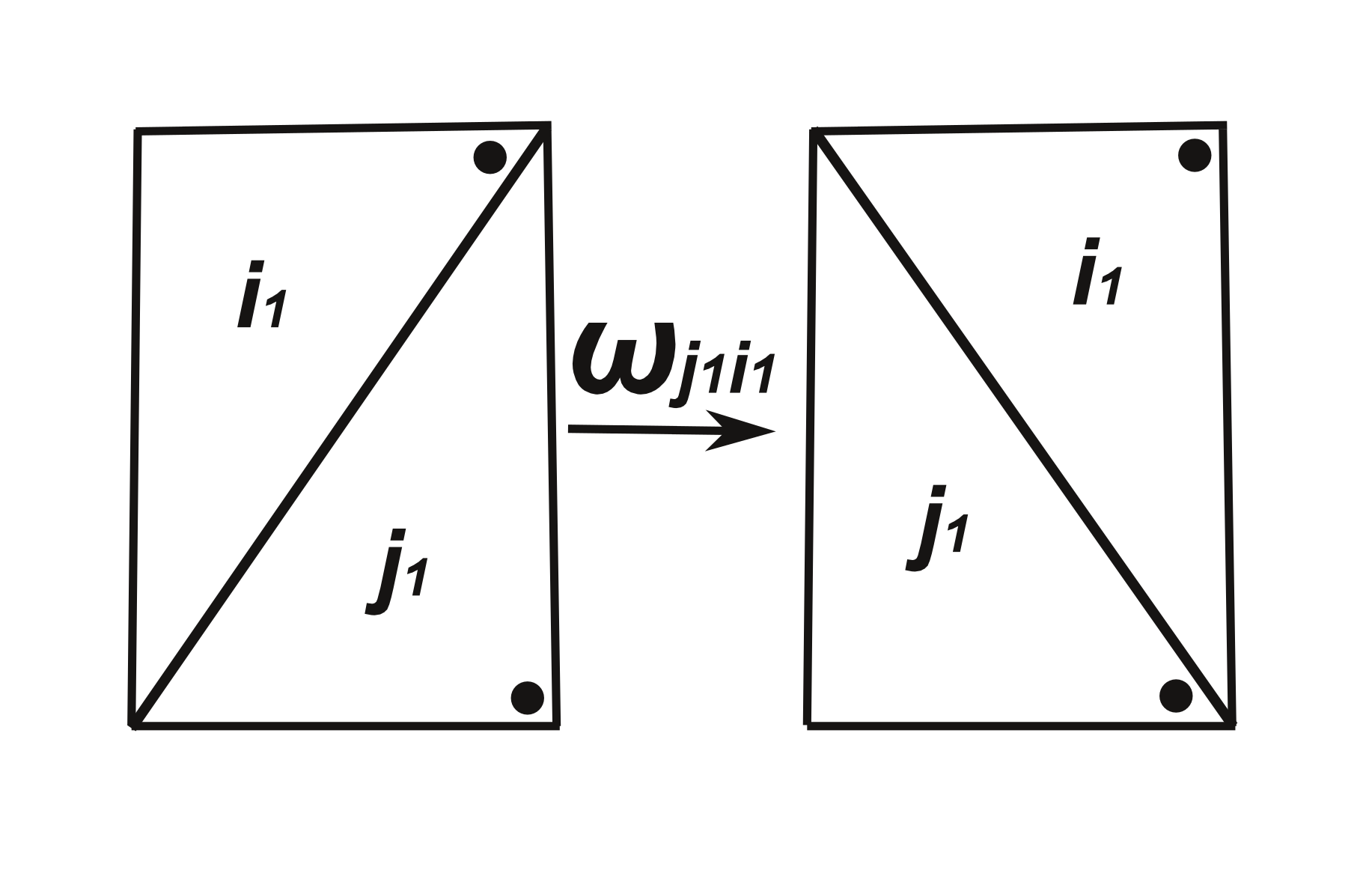}
\vspace{-10mm}
\caption{case 2 : $\omega_{ij}^\epsilon = \omega_{j_1i_1}$}
\label{fig:omega2}
\end{subfigure}
\hfill
\begin{subfigure}[b]{0.33\textwidth}
\hspace{-3mm} \includegraphics[width=55mm]{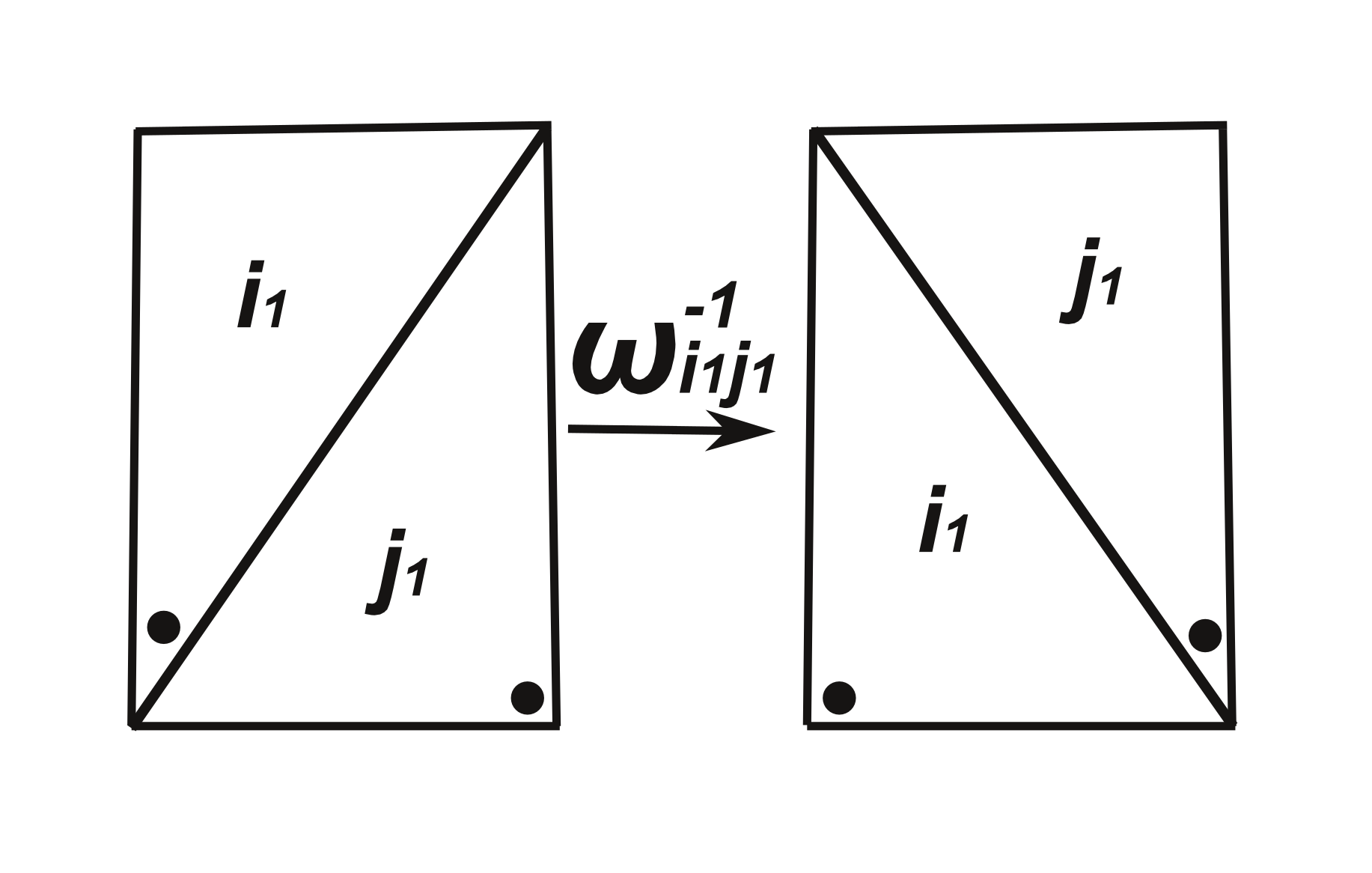}
\vspace{-10mm}
\caption{case 3 : $\omega_{ij}^\epsilon = \omega_{i_1 j_1}^{-1}$}
\label{fig:omega3}
\end{subfigure}   
\hfill
\begin{subfigure}[b]{0.30\textwidth}
\includegraphics[width=55mm]{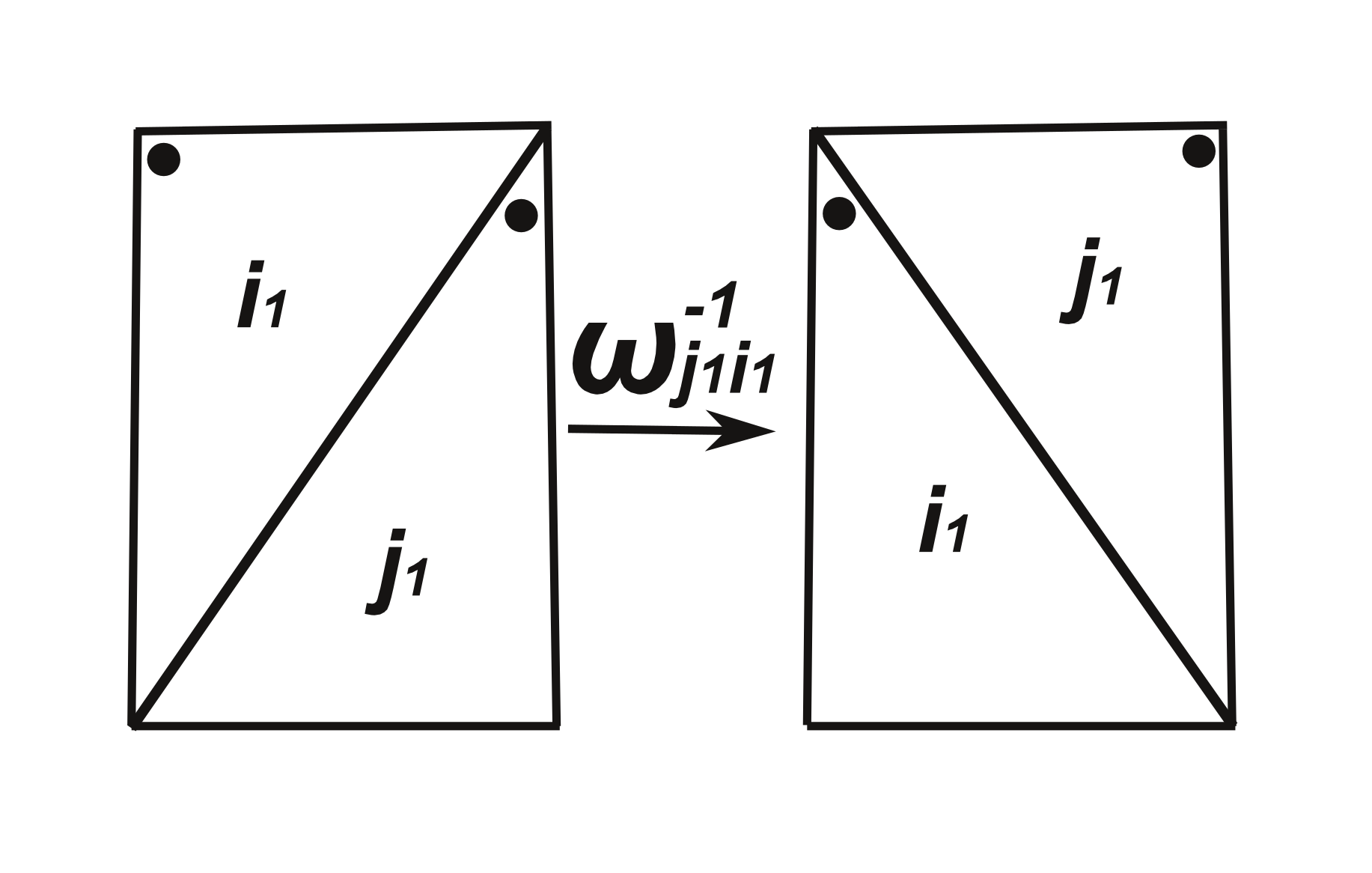}
\vspace{-10mm}
\caption{case 4 : $\omega_{ij}^\epsilon = \omega_{j_1 i_1}^{-1}$}
\label{fig:omega4}
\end{subfigure}   
\hfill 
\vspace{-2mm}
\caption{Remaining three possibilities for $\omega_{ij}^\epsilon$}
\label{fig:remaining_possibilities_for_omega_ij_epsilon}
\end{figure}

\vspace{-2mm}

{\em Case 2.} The edge $(\tau,D,L) \stackrel{\omega_{ij}^\epsilon}{\longrightarrow} (\tau',D',L')$ is as in Fig.\ref{fig:omega2}.
Then, in view of Fig.\ref{fig:omega1}, we have $(\tau,D,L) = \rho_{j_1} \rho_{i_1}^{-1} (\tau, \til{D}_0, L)$ and $(\tau', D', L') = \rho_{j_1} \rho_{i_1}^{-1} (\tau', \til{D}_0', L_1')$, while $\omega_{ij}^\epsilon = \omega_{j_1i_1}$, so we have the following loop
$$
\xymatrix{
(\tau, \til{D}_0,L) 
\ar[rr]^-{ \omega_{i_1 j_1} } \ar[d]_{\rho_{j_1} \rho_{i_1}^{-1} } & &
(\tau',\til{D}_0',L_1') \ar[d]^{\rho_{j_1} \rho_{i_1}^{-1} } \\
(\tau, D,L) \ar[rr]^-{ \omega_{j_1i_1} } & & (\tau',D',L'),
}
$$
which is contractible because it is the boundary of the $2$-cell of type $\rho_{j_1} \omega_{i_1j_1} \rho_{i_1} = \rho_{i_1} \omega_{j_1i_1} \rho_{j_1}$.

\vs

{\em Case 3.} The edge $(\tau,D,L) \stackrel{\omega_{ij}^\epsilon}{\longrightarrow} (\tau',D',L')$ is as in Fig.\ref{fig:omega3}. Then, in view of Fig.\ref{fig:omega1}, we have $(\tau,D,L) = \rho_{j_1} \rho_{i_1} (\tau, \til{D}_0,L)$ and $(\tau',D',L') = P_{(i_1 \, j_1)} \rho_{i_1} (\tau', \til{D}_0', L_1')$ (where $(i_1~j_1)$ is the transposition), while $\omega_{ij}^\epsilon = \omega_{i_1 j_1}^{-1}$. In order to cook up a loop \eqref{eq:loop_to_be_contractible} for this case and prove that it is contractible in $\mcal{C}_{\mcal{K}}(\wh{S})$, we will use two $2$-cells of $\mcal{C}_{\mcal{K}}(\wh{S})$. Consider the loop
$$
\xymatrix{
(\tau, \til{D}_0,L) 
\ar[rr]^-{ \omega_{i_1 j_1} } \ar[d]_{\rho_{j_1} \rho_{i_1} P_{(j_1\, i_1)} } & &
(\tau',\til{D}_0',L_1') \ar[d]^{ \rho_{i_1} } \\
(\tau, D,(i_1~ j_1) \circ L) \ar[rr]^-{ \omega_{j_1i_1}^{-1} } & & (\tau',D', (i_1~ j_1) \circ L'),
}
$$
whose bottom horizontal edge can be depicted by the same picture as Fig.\ref{fig:omega3} with $i_1$ and $j_1$ exchanged everywhere. This loop is contractible because it is the boundary of the $2$-cell of type $\omega_{j_1i_1} \rho_{i_1} \omega_{i_1j_1} = \rho_{j_1} \rho_{i_1} P_{(j_1 \, i_1)}$. Now consider the loop
$$
\xymatrix{
(\tau, D,(i_1~ j_1) \circ L) \ar[rr]^-{ \omega_{j_1i_1}^{-1} } \ar[d]_{P_{(j_1\, i_1)} } & & (\tau',D', (i_1~ j_1) \circ L') \ar[d]^{P_{(i_1\, j_1)}} \\
(\tau,D,L) \ar[rr]^-{\omega_{i_1 j_1}^{-1}} & & (\tau', D', L')
},
$$
which is contractible in $\mcal{C}_{\mcal{K}}(\wh{S})$ because it is the boundary of the $2$-cell of type $P_{\sigma_1}  \omega_{i_1 j_1} = \omega_{\sigma_1(i_1) \sigma_1(j_1)} P_{\sigma_1}$ for $\sigma_1 = (i_1~ j_1)$. Therefore, combining these two cells, we have a loop
$$
\xymatrix{
(\tau, \til{D}_0,L) 
\ar[rr]^-{ \omega_{i_1 j_1} } \ar[d]_{P_{(i_1\, j_1)} \rho_{j_1} \rho_{i_1} P_{(j_1\, i_1)} } & &
(\tau',\til{D}_0',L_1') \ar[d]^{ P_{(i_1\, j_1)} \rho_{i_1} } \\
(\tau, D,L) \ar[rr]^-{ \omega_{i_1j_1}^{-1} } & & (\tau',D',L').
}
$$
which is contractible in $\mcal{C}_{\mcal{K}}(\wh{S})$.

\vs

{\em Case 4.} The edge $(\tau,D,L) \stackrel{\omega_{ij}^\epsilon}{\longrightarrow} (\tau',D',L')$ is as in Fig.\ref{fig:omega4}. Then, in view of Fig.\ref{fig:omega1}, we have $(\tau,D,L) = \rho_{j_1}^{-1} (\tau, \til{D}_0, L)$ and $(\tau',D',L') =  \rho_{j_1} \rho_{i_1} P_{(i_1\, j_1)} (\tau', \til{D}_0', L_1')$, while $\omega_{ij}^\epsilon = \omega_{j_1 i_1}^{-1}$, so we have the following loop
$$
\xymatrix{
(\tau, \til{D}_0,L) 
\ar[rr]^-{ \omega_{i_1 j_1} } \ar[d]_{\rho_{j_1}^{-1} } & &
(\tau',\til{D}_0',L_1') \ar[d]^{ \rho_{j_1} \rho_{i_1} P_{(i_1\, j_1)}} \\
(\tau, D,L) \ar[rr]^-{ \omega_{j_1i_1}^{-1} } & & (\tau',D',L'),
}
$$
which is contractible in $\mcal{C}_{\mcal{K}}(\wh{S})$ because it is the boundary of the $2$-cell of type $\omega_{i_1 j_1} \rho_{j_1} \omega_{j_1 i_1} = \rho_{j_1} \rho_{i_1} P_{(i_1\, j_1)}$.

\end{proof}

\subsection{Proof of the consistency of quantization for $m=3$}
\label{subsec:consistency_condition_for_3}

Let $m=3$. Let us first define the expressions $\rho_h(\til{A}_t)$, $\rho_h(\til{T}_{ts})$, $\rho_h(\til{P}_\sigma)$ as follows:
\begin{align}
\label{eq:3_Kashaev_operators}
\left\{ {\renewcommand{\arraystretch}{1.2} \begin{array}{l}
\displaystyle  \rho_h(\til{A}_t) := {\bf P}_{\sigma^A_t} \, {\bf A}_{t_{1,1}} {\bf A}_{t_{2,1}} {\bf A}_{t_{2,2}}, \\
\rho_h(\til{T}_{ts}) := {\bf T}_{t_{2,1} s_{2,1}} {\bf T}_{t_{2,2} s_{1,1}} {\bf F}_{s_{2,1} t_{2,2}} {\bf T}_{t_{1,1} s_{2,1}} {\bf T}_{t_{2,2} s_{2,2}}, \\
\displaystyle  \rho_h(\til{P}_\sigma):= {\bf P}_{\sigma_{1,1}} {\bf P}_{\sigma_{2,1}} {\bf P}_{\sigma_{2,2}},
\end{array} } \right.
\end{align}
where $\mcal{S}_3 = \{(1,1),(2,1),(2,2)\}$ (see \eqref{eq:mcal_S_m}), the expressions ${\bf A}_{\cdot}$, ${\bf T}_{\cdot \cdot}$, ${\bf F}_{\cdot \cdot}$ are as defined by the formulas \eqref{eq:Kashaev_operators} and \eqref{eq:rho_h_F_rs} for $Q_s = \frac{1}{2\pi b} \wh{q}_s = \frac{1}{2\pi b} \log \wh{Y}_s$'s and $P_s = \frac{1}{2\pi b} \wh{p}_s = \frac{1}{2\pi b} \log \wh{Z}_s$'s, the permutations $\sigma^A_t$ and $\sigma_{r,c}$ are as described in Lem.\ref{lem:functor_from_Kashaev_groupoid_to_Qdt}, and we regard ${\bf P}_\cdot$ as a certain expression in $Q_s$'s and $P_s$'s satisfying \eqref{eq:P_conjugation_on_P_and_Q}; we recall the readers that $h = 2\pi b^2$.

\vs

The consistency condition \eqref{eq:consistency_condition} means that these expressions $\rho_h(\til{A}_t)$, $\rho_h(\til{T}_{ts})$, $\rho_h(\til{P}_\sigma)$ defined above satisfy all the relations in Lem.\ref{lem:relations_of_elementary_moves_of_Kashaev_groupoid} up to multiplicative constants. We focus on the first four relations, as others are trivially checked. In the present subsection, for convenience we replace the elements $(1,1), (2,1), (2,2)$ of $\mcal{S}_3$ by $1,2,3$; for example, the triangles labeled by $t_{1,1}$, $t_{2,1}$, $t_{2,2}$ are now labeled by the symbols $t_1$, $t_2$, $t_3$, respectively. Thus we rewrite \eqref{eq:3_Kashaev_operators} as
\begin{align}
\label{eq:3_Kashaev_operators2}
\left\{ {\renewcommand{\arraystretch}{1.2} \begin{array}{l}
\displaystyle  \rho_h(\til{A}_t) := {\bf P}_{(t_1 t_3 t_2)} {\bf A}_{t_1} {\bf A}_{t_2} {\bf A}_{t_3}, \\
\rho_h(\til{T}_{ts}) := {\bf T}_{t_3  s_1} {\bf T}_{t_2 s_2} {\bf F}_{s_2 t_3}  {\bf T}_{t_3 s_3} {\bf T}_{t_1 s_2}, \\
\displaystyle  \rho_h(\til{P}_\sigma):= {\bf P}_{\sigma_1} {\bf P}_{\sigma_2} {\bf P}_{\sigma_3},
\end{array} } \right.
\end{align}
where $\sigma_1,\sigma_2,\sigma_3$ should be understood in an obvious appropriate sense, and we used \eqref{eq:lifted_Kashaev_relations_commutation} for $\rho_h(\til{T}_{ts})$. We must show that
\begin{align}
\label{eq:to_prove_3}
\left\{ {\renewcommand{\arraystretch}{1.4}
\begin{array}{l}
\rho_h(\til{A}_t)^3={\rm id}, \quad \rho_h(\til{A}_t) \rho_h(\til{T}_{ts}) \rho_h(\til{A}_s) = \rho_h(\til{A}_s) \rho_h(\til{T}_{st}) \rho_h(\til{A}_t), \\
\rho_h(\til{T}_{ts}) \rho_h(\til{T}_{tu}) \rho_h(\til{T}_{su}) = \rho_h(\til{T}_{su}) \rho_h(\til{T}_{ts}), \quad
\rho_h(\til{T}_{ts}) \rho_h(\til{A}_t) \rho_h(\til{T}_{st}) = \, \rho_h(\til{A}_t) \rho_h(\til{A}_s) \rho_h(\til{P}_{(t \, s)})
\end{array}} \right.
\end{align}
hold up to multiplicative constants. However, these expressions are  formal ones only. To be more precise, what we want to show is that for each equation ${\rm LHS} = {\rm RHS}$, the conjugation by ${\rm LHS}$ and that by ${\rm RHS}$ yield the same result on all $Q_s$'s and $P_s$'s.
\begin{definition}
\label{def:equal_as_conjugators}
Two expressions (i.e. functions) in $Q_s$'s and $P_s$'s, $s\in I$, are said to be {\em equal as conjugators}, if the conjugation by the two on every $Q_s$ and $P_s$ agree.
\end{definition}
Thus we would like to show that all equations in \eqref{eq:to_prove_3} hold as conjugators. It is straightforward how to prove this; we just apply conjugation by each expression on all $Q_s$'s and $P_s$'s. For this we use Lem.\ref{lem:conjugation_action} and Lem.\ref{lem:conjugation_by_F_rs}, which tell us how ${\bf A}_\cdot$, ${\bf T}_{\cdot \cdot}$, ${\bf P}_\cdot$, ${\bf F}_{\cdot \cdot}$ act by conjugation on $\wh{Y}_s = e^{2\pi b Q_s}$'s and $\wh{Z}_s = e^{2\pi b P_s}$'s. This information is in fact not sufficient to obtain equalities as conjugators, defined in Def.\ref{def:equal_as_conjugators}. Here we use the $b\leftrightarrow b^{-1}$ symmetry of $\Psi_b$; namely, $\Psi_b = \Psi_{b^{-1}}$. So, the results of Lemmas \ref{lem:conjugation_action} and \ref{lem:conjugation_by_F_rs} still hold if we replace all $\wh{Y}_s$'s and $\wh{Z}_s$'s by
$$
\wh{\wh{Y}}_s := \wh{Y}_s^{1/b^2} = e^{2\pi b^{-1} Q_s} \quad\mbox{and}\quad
\wh{\wh{Z}}_s := \wh{Z}_s^{1/b^2} = e^{2\pi b^{-1} P_s},
$$
and $q = e^{\pi i b^2}$ by $\wh{q} = e^{\pi i b^{-2}}$. 

\begin{definition}
\label{def:weakly_equal_as_conjugators}
Two expressions (i.e. functions) in $Q_s$'s and $P_s$'s, $s\in I$, are said to be {\em weakly equal as conjugators}, if the conjugation by the two on every $\wh{Y}_s$, $\wh{\wh{Y}}_s$, $\wh{Z}_s$, $\wh{\wh{Z}}_s$ agree.
\end{definition}

What we actually claim to hold is the following:
\begin{proposition}
\label{prop:weakly_hold}
All equations in \eqref{eq:to_prove_3} weakly hold as conjugators.
\end{proposition}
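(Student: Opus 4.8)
The plan is to verify each of the four equations in \eqref{eq:to_prove_3} directly, by computing the conjugation action of each side on all the generators $\wh{Y}_s,\wh{\wh{Y}}_s,\wh{Z}_s,\wh{\wh{Z}}_s$ ($s\in I$) and checking that the two results coincide; scalar prefactors play no role, since conjugation by a scalar is trivial. Two reductions make this manageable. First, it suffices to treat the generators $\wh{Y}_s,\wh{Z}_s$: since $\Psi_b=\Psi_{b^{-1}}$, every conjugation formula in Lem.\ref{lem:conjugation_action} and Lem.\ref{lem:conjugation_by_F_rs} survives verbatim under the substitution $(\wh{Y}_s,\wh{Z}_s,q)\mapsto(\wh{\wh{Y}}_s,\wh{\wh{Z}}_s,\wh{q})$, so any identity established on the former automatically holds on the latter. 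Second, locality: by Lem.\ref{lem:conjugation_action} and Lem.\ref{lem:conjugation_by_F_rs} each of ${\bf A}_s$, ${\bf T}_{rs}$, ${\bf F}_{rs}$, ${\bf P}_\sigma$ conjugates the ratio coordinates by an explicit positive rational transformation with monomial powers of $q$, and fixes every ratio coordinate whose shaded triangle does not appear in its subscript; hence for each relation both sides move only the finitely many coordinates attached to shaded triangles of the ideal triangles $t,s$ (only those of $t$ for \eqref{eq:to_prove_3}(1); those of $t,s,u$ for the pentagon), and it remains to match the two composite transformations of those coordinates.

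A further observation roughly halves the work: the ``classical part'' of each identity is automatic. In the limit $q\to 1$ the generators $\wh{Y}_s,\wh{Z}_s$ become the ratio coordinates $Y_s,Z_s$, which are genuine ratios $\Delta_a/\Delta_b$ of the $A$-type coordinate functions on $\mathscr{A}_{{\rm SL}_3,\wh{S}}$, and the conjugation formulas of Lem.\ref{lem:conjugation_action} and Lem.\ref{lem:conjugation_by_F_rs} degenerate exactly to the classical coordinate-change maps of Lem.\ref{lem:coordinate_change_for_elementary_transformation_of_Qdt}; these already satisfy the Kashaev relations of Lem.\ref{lem:relations_of_elementary_moves_of_Kashaev_groupoid}, as explained in \S\ref{subsec:Kashaev_groupoid}, because they descend from an actual moduli space. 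Consequently the only genuinely new content in \eqref{eq:to_prove_3} is that the powers of $q$ accumulated along the two sides agree; these $q$-powers are produced only by the ${\bf A}$- and ${\bf T}$-conjugations (the factors $q$, $q^2$ in Lem.\ref{lem:conjugation_action}), while the ${\bf F}$- and ${\bf P}$-conjugations are $q$-free. So each verification reduces to a finite bookkeeping of exponents, which I would carry out relation by relation, pushing each generator step by step through the word, using \eqref{eq:lifted_Kashaev_relations_commutation} to slide past one another the blocks acting on disjoint coordinate sets and \eqref{eq:P_conjugation_on_P_and_Q} to absorb the ${\bf P}$-factors hidden in $\rho_h(\til{A}_t)$ and $\rho_h(\til{P}_\sigma)$.

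For the three relations \eqref{eq:to_prove_3}(1), (2), (4) this is a short computation: \eqref{eq:to_prove_3}(1) involves only the three coordinates $\wh{Y}_{t_i},\wh{Z}_{t_i}$ and the order-three identity ${\bf A}_s^3={\rm id}$ together with the label permutation $(t_1t_3t_2)$; \eqref{eq:to_prove_3}(2) and \eqref{eq:to_prove_3}(4) involve the six coordinates of $t$ and $s$, and after the commutation and permutation simplifications they reduce to instances of the $m=2$ relations of Prop.\ref{prop:lifted_Kashaev_relations} supplemented by a couple of elementary identities relating an ${\bf F}$-conjugation to a ${\bf T}$-conjugation, each a one-line check from Lem.\ref{lem:conjugation_action} and Lem.\ref{lem:conjugation_by_F_rs}.

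The main obstacle is the pentagon relation \eqref{eq:to_prove_3}(3). Each $\rho_h(\til{T})$ in \eqref{eq:3_Kashaev_operators2} is a word of length five (four ${\bf T}$'s and one ${\bf F}$), so the two sides are words of length $15$ and $10$ acting on the nine ratio coordinates of $t,s,u$, and the intermediate stages pass through quivers with dotted triangles that contain defective triangles and invisible edges, so one must track carefully which coordinate is carried by which (possibly defective) triangle at each step, and which shaded triangle carries each coordinate after each ${\bf P}$-factor permutes the labels. The route I would follow is to realize $\rho_h(\til{T}_{ts})$ pictorially as in Fig.\ref{fig:til_T_ts_realization} for $m=3$ and to read off the ratio coordinates from the resulting sequence of pictures, thereby cutting the pentagon into a handful of genuinely three-triangle sub-identities; most of these are instances of the $m=2$ pentagon ${\bf T}_{st}{\bf T}_{rs}={\bf T}_{rs}{\bf T}_{rt}{\bf T}_{st}$ of Prop.\ref{prop:lifted_Kashaev_relations}, and the remaining ones are new short identities mixing one ${\bf F}$-conjugation with ${\bf T}$-conjugations, provable directly from Lem.\ref{lem:conjugation_action} and Lem.\ref{lem:conjugation_by_F_rs}. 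Assembling these sub-identities, and combining with the shorter analyses of the other three relations, completes the proof of Prop.\ref{prop:weakly_hold}; together with the completeness of the Kashaev relations (Prop.\ref{prop:Kashaev_full_presentation}) this then yields the consistency of the quantization for $m=3$.
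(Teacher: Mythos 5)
Your proposal has a genuine gap, and it concerns precisely the feature that the paper singles out as the main novelty of the $m=3$ case: the need for the \emph{lifted linear relations} among the $\wh{q}_s$'s and $\wh{p}_s$'s. The paper proves in detail the inversion relation $\rho_h(\til{T}_{ts}) \rho_h(\til{A}_t) \rho_h(\til{T}_{st}) =  \rho_h(\til{A}_t) \rho_h(\til{A}_s) \rho_h(\til{P}_{(t \, s)})$ and shows that after all the reductions by \eqref{eq:lifted_Kashaev_relations_P}--\eqref{eq:lifted_Kashaev_relations_commutation}, \eqref{eq:A_commutation}, \eqref{eq:basic_commutation}, the two sides differ by a residual conjugator ${\bf K}_{ts}$ whose triviality as a conjugator is \emph{equivalent} to the standard lift of the linear relation \eqref{eq:the_crucial_linear_relation} coming from the ``small middle diamond'' of Fig.\ref{fig:linearrel}. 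Your proposal never invokes these quantum linear relations: you treat the algebra as if \eqref{eq:commutation_relations_YZ} were its only defining relations, as in Kashaev's $m=2$ case, and you expect the inversion relation to reduce to $m=2$ identities plus some short ${\bf F}$--${\bf T}$ lemmas. The paper states explicitly that this cannot be the case: ``Unlike the case $m=2$, our proof of the consistency condition for $m=3$ \emph{does} use the lifted linear relations of $\wh{p}_r$'s and $\wh{q}_r$'s, and therefore we cannot use the representation as in \eqref{eq:star_representation}.''

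The reduction you propose to $q$-exponent bookkeeping is where this gets hidden. It is true that in the commutative limit the identities hold, because the classical coordinate changes come from the genuine moduli space $\mathscr{A}_{{\rm SL}_3,\wh{S}}$; but that classical agreement already relies on the relations of Lem.\ref{lem:linear_algebraic_relations_among_ratio_coordinates} and is \emph{not} an equality of rational expressions in the free ring on $\{Y_s,Z_s\}$. Consequently, after pushing a generator through both words of the inversion relation you do not obtain two monomials in $\wh{Y}_s,\wh{Z}_s$ differing by a power of $q$; you obtain two genuinely different operator expressions, and the discrepancy is a nontrivial \emph{additive} linear combination of the $Q$'s and $P$'s, exactly the left-hand side of \eqref{eq:the_crucial_linear_relation}. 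This is visible in the paper's final conjugation computations for ${\bf K}_{ts}$, e.g. ${\bf K}_{ts}^{-1} Q_{s_1} {\bf K}_{ts} = Q_{s_1} + (P_{s_1}+Q_{s_3}-P_{s_3}-Q_{t_3}+P_{t_2})$, which equals $Q_{s_1}$ only after imposing \eqref{eq:the_crucial_linear_relation}. No amount of $q$-power accounting produces this cancellation.

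Two smaller remarks. First, the paper notes that the order-three, pentagon, and consistency relations do \emph{not} require the linear relations, so your plan of decomposing those into $m=2$ Kashaev relations plus short ${\bf F}$--${\bf T}$ identities is likely sound for those three; the gap is specific to the inversion relation. Second, the paper deliberately avoids the generator-by-generator conjugation computation (calling it ``a tedious job'') and instead manipulates the operator words directly, treating \eqref{eq:lifted_Kashaev_rel}--\eqref{eq:lifted_Kashaev_relations_commutation} as weak equalities of conjugators; this algebraic word-rewriting route is considerably shorter and makes the appearance of the residual factor ${\bf K}_{ts}$, and hence the necessity of the linear relation, transparent rather than buried in expression comparisons.
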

When $m=2$, it suffices to have Prop.\ref{prop:weakly_hold}. In this case, $Q_s$ and $P_s$ are realized as concrete operators on a specific Hilbert space, and it is well known that this representation \eqref{eq:star_representation} is `strongly irreducible', in the sense that any (bounded) operator on this Hilbert space commuting with $Q_s$ and $P_s$ is a scalar operator. Therefore, equality as (bounded) conjugators imply equality up to a multiplicative constant. Moreover, in this case, the algebra generated by $\wh{\wh{Y}}_s$ and $\wh{\wh{Z}}_s$ is the `modular double' \cite{F} counterpart of the `quantum plane' algebra generated by $\wh{Y}_s$ and $\wh{Z}_s$ \cite{FrKi}. The modular double representation induced by \eqref{eq:star_representation} is strongly irreducible, so that any equation of unitary operators which holds weakly as conjugators genuinely holds up to a multiplicative constant.

\vs

For $m=3$, using Lemmas \ref{lem:conjugation_action} and \ref{lem:conjugation_by_F_rs}, their `$b^{-1}$ versions', together with \eqref{eq:P_conjugation_on_P_and_Q}, it is a tedious job to prove Prop.\ref{prop:weakly_hold}, and it quickly requires much space to write all computation down. As an alternative shorter proof, we can manipulate the expressions $\rho_h(\til{A}_\cdot)$, $\rho_h(\til{T}_{\cdot \cdot})$, $\rho_h(\til{P}_\cdot)$ directly. In such a proof, what we use are the relations in Prop.\ref{prop:lifted_Kashaev_relations}, which should now be understood as (weak) equalities as conjugators. It is also useful to have some more equalities like ${\bf A}_r {\bf F}_{sr} {\bf A}_s = {\bf A}_s {\bf F}_{rs} {\bf A}_r$ or $\Psi_b(P)\Psi_b(Q) = \Psi_b(Q) \Psi_b(P+Q) \psi_b(P)$ (for $[P,Q] = \frac{1}{2\pi i}$), which are also (weak) equalities as conjugators. We present a proof of one of the sought-for equations here, which is special because the proof requires the linear relation of $Q_s$'s and $P_s$'s, while other equations do not.
\begin{lemma}[proof of the inversion relation, as conjugators]
One has
$$
\rho_h(\til{T}_{ts}) \rho_h(\til{A}_t) \rho_h(\til{T}_{st}) =  \rho_h(\til{A}_t) \rho_h(\til{A}_s) \rho_h(\til{P}_{(t \, s)})
$$
as conjugators. The proof depends on the following linear relation of $Q_\cdot$'s and $P_\cdot$'s:
\begin{align}
\label{eq:the_crucial_linear_relation}
P_{s_1} + Q_{s_3} - P_{s_3} - Q_{t_3} + P_{t_2} = 0,
\end{align}
coming from the `small middle diamond' in the quadrilateral formed by ideal triangles $t$ and $s$ of any dotted triangulation to which $\til{T}_{st}$ can be applied; see the dotted loop in Fig.\ref{fig:linearrel}.
\end{lemma}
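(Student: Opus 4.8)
The plan is to verify the identity directly at the level of conjugators, reducing it to the \emph{lifted Kashaev relations} of Prop.~\ref{prop:lifted_Kashaev_relations} together with the two auxiliary identities ${\bf A}_r {\bf F}_{sr} {\bf A}_s = {\bf A}_s {\bf F}_{rs} {\bf A}_r$ and the quantum-dilogarithm pentagon $\Psi_b(P)\Psi_b(Q) = \Psi_b(Q)\Psi_b(P+Q)\Psi_b(P)$ (valid when $[P,Q]=\tfrac{1}{2\pi i}$), all of which hold weakly as conjugators. First I would substitute the $m=3$ expansions \eqref{eq:3_Kashaev_operators2} into both sides of $\rho_h(\til{T}_{ts})\rho_h(\til{A}_t)\rho_h(\til{T}_{st}) = \rho_h(\til{A}_t)\rho_h(\til{A}_s)\rho_h(\til{P}_{(t\,s)})$, being careful with the relabeling: applying $\til{A}_s$ rotates the dot of the ideal triangle $s$, which permutes the labels $s_1,s_2,s_3$ of its three shaded triangles by $\sigma^A_s$, and only after this rotation is $\til{T}_{st}$ applicable, so $\rho_h(\til{T}_{st})$ on the left is the word \eqref{eq:3_Kashaev_operators2} written in the $\sigma^A_s$-permuted labels. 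Likewise $\rho_h(\til{P}_{(t\,s)})$ on the right is the product ${\bf P}_{(t_1 s_1)}{\bf P}_{(t_2 s_2)}{\bf P}_{(t_3 s_3)}$. I would then push all ${\bf P}_\cdot$ factors to one end using \eqref{eq:lifted_Kashaev_relations_P}, so that matching the two sides splits into (i) a bookkeeping check that the residual permutation operators agree and (ii) an identity among pure ${\bf A}, {\bf T}, {\bf F}$ words on the six vertices $t_1,t_2,t_3,s_1,s_2,s_3$.

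For part (ii) I would simplify both words step by step. The two $\rho_h(\til{T})$ factors on the left produce subwords in which ${\bf T}_{t_i s_j}$-blocks meet the ${\bf A}_{t_i}$-blocks of $\rho_h(\til{A}_t)$, and I would apply the inversion relation ${\bf T}_{rs}{\bf A}_r{\bf T}_{sr}=\zeta\,{\bf A}_r{\bf A}_s{\bf P}_{(rs)}$ and the consistency relation ${\bf A}_r{\bf T}_{rs}{\bf A}_s={\bf A}_s{\bf T}_{sr}{\bf A}_r$ wherever a matched triple appears, moving commuting factors past each other via \eqref{eq:lifted_Kashaev_relations_commutation}. The ${\bf F}_{s_2 t_3}$ factors are handled with the conjugation rule of Lem.~\ref{lem:conjugation_by_F_rs} and the auxiliary relation ${\bf A}_r{\bf F}_{sr}{\bf A}_s={\bf A}_s{\bf F}_{rs}{\bf A}_r$, while the pentagon ${\bf T}_{st}{\bf T}_{rs}={\bf T}_{rs}{\bf T}_{rt}{\bf T}_{st}$ reorders the remaining ${\bf T}$-blocks. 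After this reduction I expect both sides to collapse to the same word up to a single residual $\Psi_b$-type factor whose argument is a fixed linear combination of the $Q_\cdot$'s and $P_\cdot$'s — precisely the combination $P_{s_1}+Q_{s_3}-P_{s_3}-Q_{t_3}+P_{t_2}$.

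At this point the linear relation \eqref{eq:the_crucial_linear_relation} enters: in $\wh{\mathscr{K}}^{\,\, h}_{3,\wh{S}}$ we have imposed (the standard lift of) exactly the ``small middle diamond'' relation $P_{s_1}+Q_{s_3}-P_{s_3}-Q_{t_3}+P_{t_2}=0$, and using it to rewrite the argument of the leftover factor shows that factor equals the identity (equivalently, it matches the corresponding factor on the other side), so it drops out and the two words agree. Finally, since every identity invoked — Prop.~\ref{prop:lifted_Kashaev_relations}, the $\Psi_b$-pentagon, $\Psi_b=\Psi_{b^{-1}}$, and Lem.~\ref{lem:conjugation_by_F_rs} — is symmetric under $b\leftrightarrow b^{-1}$, the equality holds on the $\wh{\wh{Y}}_\cdot,\wh{\wh{Z}}_\cdot$ as well, i.e. weakly as conjugators in the sense of Def.~\ref{def:weakly_equal_as_conjugators}, which is what is claimed. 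The main obstacle is twofold and essentially bookkeeping-heavy: getting the relabelings induced by $\til{A}_s$, $\til{T}_{st}$ and $\til{P}_{(t\,s)}$ exactly right so that the two sides are genuinely comparable, and then isolating the single step of the ${\bf A}$--${\bf T}$--${\bf F}$ reduction at which the middle-diamond relation is the \emph{only} available move — in contrast to the other three relations in \eqref{eq:to_prove_3}, which close up using Prop.~\ref{prop:lifted_Kashaev_relations} alone.
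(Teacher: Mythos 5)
Your overall plan is the same one the paper uses: substitute \eqref{eq:3_Kashaev_operators2}, push the ${\bf P}_\cdot$ factors aside via \eqref{eq:lifted_Kashaev_relations_P}, reduce the remaining ${\bf A},{\bf T},{\bf F}$-word using the lifted Kashaev relations of Prop.~\ref{prop:lifted_Kashaev_relations} together with \eqref{eq:basic_commutation} and \eqref{eq:A_commutation}, and then show that the residual factor acts trivially as a conjugator \emph{because of} the linear relation \eqref{eq:the_crucial_linear_relation}. The final $b\leftrightarrow b^{-1}$ symmetry argument is also exactly what the paper does. So the strategy is sound, but there are two places where what you have written would lead you astray in the execution.

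First, the claim that ``$\rho_h(\til{T}_{st})$ on the left is the word \eqref{eq:3_Kashaev_operators2} written in the $\sigma^A_s$-permuted labels'' is a misconception. The formulas \eqref{eq:3_Kashaev_operators2} are \emph{fixed} expressions in the abstract generators indexed by the shaded-triangle labels; the quantization assigns them to the elementary moves independently of which dotted triangulation is currently in play, and all the combinatorial bookkeeping has already been absorbed into the internal ${\bf P}_{\sigma^A_t}$ (resp.\ ${\bf P}_{\sigma_\cdot}$) factors of $\rho_h(\til{A}_t)$ (resp.\ $\rho_h(\til{P}_\sigma)$). Moreover the left-hand side $\til{T}_{ts}\til{A}_t\til{T}_{st}$ contains no $\til{A}_s$ at all, so there is no ``rotation of triangle $s$'' to account for there. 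You should simply substitute the formulas verbatim — as the paper does — and let the permutation operators do the relabeling. Second, your expectation that the two sides ``collapse to the same word up to a single residual $\Psi_b$-type factor'' is not what happens: in the actual reduction every $\Psi_b$ is reabsorbed (after \eqref{eq:basic_commutation}-style shifts of its argument) into a ${\bf T}_{\cdot\cdot}$ block so that the lifted relations \eqref{eq:lifted_Kashaev_rel}, \eqref{eq:lifted_Kashaev_relations_major} apply, and the residual ${\bf K}_{ts}:=(\rho_h(\til{A}_t)\rho_h(\til{A}_s)\rho_h(\til{P}_{(ts)}))^{-1}\rho_h(\til{T}_{ts})\rho_h(\til{A}_t)\rho_h(\til{T}_{st})$ comes out as $\zeta^4$ times ${\bf P}_{(t_2 s_3)}$ times a product of exponentials of quadratics in the $Q_\cdot,P_\cdot$, with no $\Psi_b$ left. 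The role of \eqref{eq:the_crucial_linear_relation} is then not to make a $\Psi_b$-argument vanish but to verify by direct conjugation that ${\bf K}_{ts}$ fixes each of the twelve generators $Q_{t_i},P_{t_i},Q_{s_j},P_{s_j}$; that check fails on five of them unless one substitutes \eqref{eq:the_crucial_linear_relation}, which is where the relation is genuinely needed.
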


\begin{proof}
Note that
\begin{align*}
& \rho_h(\til{T}_{ts}) \rho_h( \til{A}_t) \rho_h( \til{T}_{st})  \\
& \hspace{-2mm} \stackrel{\eqref{eq:3_Kashaev_operators2}}{=} ({\bf T}_{t_3 s_1} {\bf T}_{t_2 s_2} e^{-2\pi i Q_{s_2} P_{t_3} } {\bf T}_{t_3 s_3} {\bf T}_{t_1 s_2})
(\ulu{ {\bf P}_{(t_1 t_3 t_2)} }{{\rm to~left}} {\bf A}_{t_1} {\bf A}_{t_2} {\bf A}_{t_3}) ({\bf T}_{s_3 t_1} {\bf T}_{s_2 t_2} e^{-2\pi i Q_{t_2} P_{s_3} } {\bf T}_{s_3 t_3} {\bf T}_{s_1 t_2}) \\
& \hspace{-2.5mm} \stackrel{\eqref{eq:lifted_Kashaev_relations_P}}{=} ( {\bf P}_{(t_1 t_3 t_2)} )
{\bf T}_{t_1 s_1} {\bf T}_{t_3 s_2} e^{-2\pi i Q_{s_2} P_{t_1}} {\bf T}_{t_1 s_3} \ul{ {\bf T}_{t_2 s_2} }
{\bf A}_{t_1} \ul{ {\bf A}_{t_2} } {\bf A}_{t_3} 
{\bf T}_{s_3 t_1} \ul{ {\bf T}_{s_2 t_2} } e^{-2\pi i Q_{t_2} P_{s_3}} {\bf T}_{s_3 t_3} {\bf T}_{s_1 t_2}
\end{align*}

\begin{align*}
& \hspace{-6.5mm} \stackrel{\eqref{eq:lifted_Kashaev_relations_commutation}, \eqref{eq:lifted_Kashaev_relations_major}}{=} \zeta {\bf P}_{(t_1 t_3 t_2)} 
{\bf T}_{t_1 s_1} {\bf T}_{t_3 s_2} e^{-2\pi i Q_{s_2} P_{t_1}} \ul{ {\bf T}_{t_1 s_3}  }
\ul{ {\bf A}_{t_1} }
({\bf A}_{t_2} {\bf A}_{s_2} {\bf P}_{(t_2 s_2)})
{\bf A}_{t_3} 
\ul{ {\bf T}_{s_3 t_1} } e^{-2\pi i Q_{t_2} P_{s_3}} {\bf T}_{s_3 t_3} {\bf T}_{s_1 t_2} \\
& \hspace{-6.5mm} \stackrel{\eqref{eq:lifted_Kashaev_relations_commutation}, \eqref{eq:lifted_Kashaev_relations_major}}{=} \zeta^2 {\bf P}_{(t_1 t_3 t_2)} 
{\bf T}_{t_1 s_1} {\bf T}_{t_3 s_2} e^{-2\pi i Q_{s_2} P_{t_1}}
({\bf A}_{t_1} {\bf A}_{s_3} \ulu{ {\bf P}_{(t_1 s_3)} }{{\rm to~left}})
{\bf A}_{t_2} {\bf A}_{s_2} {\bf P}_{(t_2  s_2)}
{\bf A}_{t_3} 
e^{-2\pi i Q_{t_2} P_{s_3}} {\bf T}_{s_3 t_3} {\bf T}_{s_1 t_2} \\
& \hspace{-2.5mm} \stackrel{\eqref{eq:lifted_Kashaev_relations_P}}{=}  \zeta^2 {\bf P}_{(t_1 t_3 t_2)} ({\bf P}_{(t_1 s_3)} )
{\bf T}_{s_3 s_1} {\bf T}_{t_3 s_2} e^{-2\pi i Q_{s_2} P_{s_3}}
{\bf A}_{s_3} {\bf A}_{t_1} 
{\bf A}_{t_2} {\bf A}_{s_2} \ulu{ {\bf P}_{(t_2 s_2)} }{{\rm to~left}}
{\bf A}_{t_3}
e^{-2\pi i Q_{t_2} P_{s_3}} {\bf T}_{s_3 t_3} {\bf T}_{s_1 t_2} \\
& \hspace{-2.5mm} \stackrel{\eqref{eq:lifted_Kashaev_relations_P}}{=}  \zeta^2 {\bf P}_{(t_1 t_3 t_2)} {\bf P}_{(t_1 s_3)}  ({\bf P}_{(t_2  s_2)} )
{\bf T}_{s_3 s_1} {\bf T}_{t_3 t_2} e^{-2\pi i Q_{t_2} P_{s_3}}
{\bf A}_{s_3} \ulu{ {\bf A}_{t_1} 
{\bf A}_{s_2} }{{\rm to~left}} {\bf A}_{t_2} 
{\bf A}_{t_3}
\ulu{ e^{-2\pi i Q_{t_2} P_{s_3}} }{{\rm to~right}} {\bf T}_{s_3 t_3} {\bf T}_{s_1 t_2} \\
& \hspace{-10mm} \stackrel{\eqref{eq:lifted_Kashaev_relations_commutation},\eqref{eq:3_Kashaev_operators2},\eqref{eq:basic_commutation}}{=} \zeta^2 {\bf P}_{(t_1 t_3 t_2)} {\bf P}_{(t_1 s_3)} {\bf P}_{(t_2 s_2)}  ({\bf A}_{t_1} {\bf A}_{s_2} )
{\bf T}_{s_3 s_1} {\bf T}_{t_3 t_2} e^{-2\pi i Q_{t_2} P_{s_3}}
{\bf A}_{s_3}
{\bf A}_{t_2}
{\bf A}_{t_3} \\
& \qquad \quad \cdot (\ulu{ e^{2\pi i Q_{t_3} P_{s_3}} }{{\rm to~right}} \Psi_b(Q_{s_3} + P_{t_3} - Q_{t_3} +(- Q_{t_2}) )^{-1} e^{-2\pi i Q_{t_2} P_{s_3}} )  {\bf T}_{s_1 t_2} \\
& \hspace{-6.5mm} \stackrel{\eqref{eq:basic_commutation},\eqref{eq:lifted_Kashaev_relations_P}}{=} \zeta^2 {\bf P}_{(t_1 t_3 t_2) (t_1 s_3) (t_2 s_2)}  {\bf A}_{t_1} {\bf A}_{s_2} 
{\bf T}_{s_3 s_1} {\bf T}_{t_3 t_2} e^{-2\pi i Q_{t_2} P_{s_3}}
{\bf A}_{s_3}
\ulu{ {\bf A}_{t_2} }{{\rm to~right}}
{\bf A}_{t_3} \\
& \qquad \cdot (\Psi_b(Q_{s_3} + P_{t_3} - \cancel{ Q_{t_3}  }- Q_{t_2} + ( \cancel{ Q_{t_3} } - P_{s_3}) )^{-1} e^{2\pi i Q_{t_3} P_{s_3}} )  e^{-2\pi i Q_{t_2} P_{s_3}}  {\bf T}_{s_1 t_2} \\
& \hspace{-2mm} \stackrel{\eqref{eq:A_commutation}}{=} \zeta^2 {\bf P}_{(t_1 t_3 t_2) (t_1 s_3) (t_2 s_2)}  {\bf A}_{t_1} {\bf A}_{s_2} 
{\bf T}_{s_3 s_1} {\bf T}_{t_3 t_2} e^{-2\pi i Q_{t_2} P_{s_3}}
\ulu{ {\bf A}_{s_3} }{{\rm to~right}}
{\bf A}_{t_3} \\
& \quad \cdot (\Psi_b(Q_{s_3} + P_{t_3}  + (Q_{t_2} -P_{t_2} ) - P_{s_3} )^{-1} {\bf A}_{t_2}  )
e^{2\pi i Q_{t_3} P_{s_3}} e^{-2\pi i Q_{t_2} P_{s_3}}  {\bf T}_{s_1 t_2}, \\
& \hspace{-2mm} \stackrel{\eqref{eq:A_commutation}}{=} \zeta^2 {\bf P}_{(t_1 t_3 t_2) (t_1 s_3) (t_2 s_2)}  {\bf A}_{t_1} {\bf A}_{s_2} 
{\bf T}_{s_3 s_1} {\bf T}_{t_3 t_2} \ulu{ e^{-2\pi i Q_{t_2} P_{s_3}} }{{\rm to~right}}
{\bf A}_{t_3} \\
& \quad \cdot (\Psi_b((P_{s_3} ) + P_{t_3} + Q_{t_2} -P_{t_2}   )^{-1}   {\bf A}_{s_3} )
{\bf A}_{t_2} e^{2\pi i Q_{t_3} P_{s_3}} e^{-2\pi i Q_{t_2} P_{s_3}}  {\bf T}_{s_1 t_2} \\
& \hspace{-2.5mm} \stackrel{\eqref{eq:basic_commutation}}{=} \zeta^2 {\bf P}_{(t_1 t_3 t_2) (t_1 s_3) (t_2 s_2)}  {\bf A}_{t_1} {\bf A}_{s_2} 
{\bf T}_{s_3 s_1} {\bf T}_{t_3 t_2} 
{\bf A}_{t_3} \cdot \ul{ 1  } \\
& \quad \cdot (\Psi_b( \cancel{ P_{s_3}  } + P_{t_3} + Q_{t_2} -P_{t_2}  +(- \cancel{ P_{s_3} } ) )^{-1}   e^{-2\pi i Q_{t_2} P_{s_3}}  )
{\bf A}_{s_3} {\bf A}_{t_2} e^{2\pi i Q_{t_3} P_{s_3}} e^{-2\pi i Q_{t_2} P_{s_3}}  {\bf T}_{s_1 t_2} \\
& = \zeta^2 {\bf P}_{(t_1 t_3 t_2) (t_1 s_3) (t_2 s_2)}  {\bf A}_{t_1} {\bf A}_{s_2} 
{\bf T}_{s_3 s_1} {\bf T}_{t_3 t_2} 
{\bf A}_{t_3} (e^{2\pi i Q_{t_3} P_{t_2}} \ulu{ e^{-2\pi i Q_{t_3} P_{t_2}}  }{{\rm to~right}})  \\
& \quad \cdot \Psi_b( P_{t_3} + Q_{t_2} -P_{t_2}   )^{-1}   
e^{-2\pi i Q_{t_2} P_{s_3}}   {\bf A}_{s_3} {\bf A}_{t_2} e^{2\pi i Q_{t_3} P_{s_3}} e^{-2\pi i Q_{t_2} P_{s_3}}  {\bf T}_{s_1 t_2} \\
& \hspace{-2.5mm} \stackrel{\eqref{eq:basic_commutation}}{=} \zeta^2 {\bf P}_{(t_1 t_3 t_2) (t_1 s_3) (t_2 s_2)}  {\bf A}_{t_1} {\bf A}_{s_2} 
{\bf T}_{s_3 s_1} \\
& \qquad \cdot \ul{ {\bf T}_{t_3 t_2} 
{\bf A}_{t_3} \underbrace{ e^{2\pi i Q_{t_3} P_{t_2}}  ( \Psi_b( P_{t_3} + Q_{t_2} - \cancel{P_{t_2}} +(  - Q_{t_3} + \cancel{P_{t_2}} ) )^{-1}  }_{={\bf T}_{t_2t_3}} } \\
& \qquad \cdot     e^{-2\pi i Q_{t_3} P_{t_2}}  ) 
e^{-2\pi i Q_{t_2} P_{s_3}}   {\bf A}_{s_3} {\bf A}_{t_2} e^{2\pi i Q_{t_3} P_{s_3}} e^{-2\pi i Q_{t_2} P_{s_3}}  {\bf T}_{s_1 t_2} \\
& \hspace{-2.5mm} \stackrel{\eqref{eq:lifted_Kashaev_relations_major}}{=} \zeta^3 {\bf P}_{(t_1 t_3 t_2) (t_1 s_3) (t_2 s_2)}  {\bf A}_{t_1} {\bf A}_{s_2} 
{\bf T}_{s_3 s_1}
(\ulu{ {\bf A}_{t_3} {\bf A}_{t_2} {\bf P}_{(t_3 t_2)} }{{\rm to~left}} ) \\
& \quad \cdot e^{-2\pi i Q_{t_3} P_{t_2}}  
e^{-2\pi i Q_{t_2} P_{s_3}}  \ulu{  {\bf A}_{s_3} }{{\rm to~left}} {\bf A}_{t_2} e^{2\pi i Q_{t_3} P_{s_3}} \ulu{ e^{-2\pi i Q_{t_2} P_{s_3}}  }{{\rm to~right}} {\bf T}_{s_1 t_2} \\
& \hspace{-13mm} \stackrel{\eqref{eq:lifted_Kashaev_relations_commutation},\eqref{eq:A_commutation},\eqref{eq:3_Kashaev_operators2},\eqref{eq:basic_commutation}}{=} \zeta^3 {\bf P}_{(t_1 t_3 t_2) (t_1 s_3) (t_2 s_2)} 
({\bf A}_{t_3} {\bf A}_{t_2} {\bf P}_{(t_3 t_2)})
 {\bf A}_{t_1} {\bf A}_{s_2} 
{\bf T}_{s_3 s_1}  ( {\bf A}_{s_3} )  e^{-2\pi i Q_{t_3} P_{t_2}}  
 \\
& \quad \cdot 
 e^{-2\pi i Q_{t_2} (Q_{s_3} - P_{s_3})}  {\bf A}_{t_2} e^{2\pi i Q_{t_3} P_{s_3}} 
(\ulu{ e^{2\pi i Q_{t_2} P_{s_1}} }{{\rm to~right}} \Psi_b(Q_{s_1} + P_{t_2} - Q_{t_2} + (P_{s_3} ))^{-1}
e^{-2\pi i Q_{t_2} P_{s_3}} )
 \\
& \hspace{-2.5mm} \stackrel{\eqref{eq:basic_commutation}}{=} \zeta^3 {\bf P}_{(t_1 t_3 t_2) (t_1 s_3) (t_2 s_2)} 
{\bf A}_{t_3} {\bf A}_{t_2} {\bf P}_{(t_3 t_2)} 
 {\bf A}_{t_1} {\bf A}_{s_2} 
{\bf T}_{s_3 s_1}  {\bf A}_{s_3}  e^{-2\pi i Q_{t_3} P_{t_2}}   e^{-2\pi i Q_{t_2} (Q_{s_3} - P_{s_3})} 
 \\
& \quad \cdot 
{\bf A}_{t_2} \ulu{ e^{2\pi i Q_{t_3} P_{s_3}}  }{{\rm to~right}}
(\Psi_b(Q_{s_1} + P_{t_2} - \cancel{ Q_{t_2} } + P_{s_3}  + (\cancel{ Q_{t_2} } - P_{s_1} ) )^{-1} e^{2\pi i P_{s_1} Q_{t_2}}  )
e^{-2\pi i Q_{t_2} P_{s_3}} 
\end{align*}

\begin{align*}
& \hspace{-2.5mm} \stackrel{\eqref{eq:basic_commutation}}{=} \zeta^3 {\bf P}_{(t_1 t_3 t_2) (t_1 s_3) (t_2 s_2)} 
{\bf A}_{t_3} {\bf A}_{t_2} {\bf P}_{(t_3 t_2)}
 {\bf A}_{t_1} {\bf A}_{s_2} 
{\bf T}_{s_3 s_1}  {\bf A}_{s_3}  e^{-2\pi i Q_{t_3} P_{t_2}}   e^{-2\pi i Q_{t_2} (Q_{s_3} - P_{s_3})} 
 \\
& \quad \cdot 
\ulu{ {\bf A}_{t_2}  }{{\rm to~right}}
(\Psi_b(Q_{s_1} + P_{t_2}  + P_{s_3}   - P_{s_1}  )^{-1}  e^{2\pi i Q_{t_3} P_{s_3}}  )
e^{2\pi i Q_{t_2} P_{s_1}}  e^{-2\pi i Q_{t_2}  P_{s_3}} 
 \\
& \hspace{-2mm} \stackrel{\eqref{eq:A_commutation}}{=} \zeta^3 {\bf P}_{(t_1 t_3 t_2) (t_1 s_3) (t_2 s_2)} 
{\bf A}_{t_3} {\bf A}_{t_2} {\bf P}_{(t_3 t_2)} 
 {\bf A}_{t_1} {\bf A}_{s_2} 
{\bf T}_{s_3 s_1}  {\bf A}_{s_3}  e^{-2\pi i Q_{t_3} P_{t_2}}   \ulu{ e^{-2\pi i Q_{t_2} (Q_{s_3} - P_{s_3})}  }{{\rm to~right}}
 \\
& \quad \cdot 
(\Psi_b(Q_{s_1} +(- Q_{t_2})  + P_{s_3}   - P_{s_1}  )^{-1}   {\bf A}_{t_2}    )
e^{2\pi i Q_{t_3} P_{s_3}}  e^{2\pi i Q_{t_2} P_{s_1}}  e^{-2\pi i Q_{t_2} P_{s_3}} 
 \\
& \hspace{-2.5mm} \stackrel{\eqref{eq:basic_commutation}}{=} \zeta^3 {\bf P}_{(t_1 t_3 t_2) (t_1 s_3) (t_2 s_2)} 
{\bf A}_{t_3} {\bf A}_{t_2} {\bf P}_{(t_3 t_2)} 
{\bf A}_{t_1} {\bf A}_{s_2} 
{\bf T}_{s_3 s_1}  {\bf A}_{s_3}  \cdot \ul{1} \cdot \ulu{e^{-2\pi i Q_{t_3} P_{t_2}}   }{{\rm to~right}}
 \\
& \quad \cdot 
(\Psi_b(Q_{s_1}- \cancel{ Q_{t_2} } + P_{s_3}   - P_{s_1} + (\cancel{ Q_{t_2} }  ))^{-1}    e^{-2\pi i Q_{t_2} (Q_{s_3} - P_{s_3})})
{\bf A}_{t_2}  e^{2\pi i Q_{t_3} P_{s_3}}  e^{2\pi i Q_{t_2} P_{s_1}}  e^{-2\pi i Q_{t_2} P_{s_3}} 
 \\
& \hspace{-2.5mm} \stackrel{\eqref{eq:basic_commutation}}{=} \zeta^3 {\bf P}_{(t_1 t_3 t_2) (t_1 s_3) (t_2 s_2)} 
{\bf A}_{t_3} {\bf A}_{t_2} {\bf P}_{(t_3 t_2)} 
{\bf A}_{t_1} {\bf A}_{s_2} 
{\bf T}_{s_3 s_1}  {\bf A}_{s_3}  \cdot (e^{2\pi i Q_{s_3} P_{s_1}} \ulu{ e^{-2\pi i Q_{s_3} P_{s_1}} }{{\rm to~right}} )
 \\
& \quad \cdot 
(\Psi_b(Q_{s_1} + P_{s_3}   - P_{s_1}  )^{-1} e^{-2\pi i Q_{t_3} P_{t_2}}  )
e^{-2\pi i Q_{t_2} (Q_{s_3} - P_{s_3})} {\bf A}_{t_2}  e^{2\pi i Q_{t_3} P_{s_3}}  e^{2\pi i Q_{t_2} P_{s_1}}  e^{-2\pi i Q_{t_2} P_{s_3}} 
 \\
& \hspace{-2.5mm} \stackrel{\eqref{eq:basic_commutation}}{=} \zeta^3 {\bf P}_{(t_1 t_3 t_2) (t_1 s_3) (t_2 s_2)} 
{\bf A}_{t_3} {\bf A}_{t_2} {\bf P}_{(t_3 t_2)} 
{\bf A}_{t_1} {\bf A}_{s_2} 
\ul{ {\bf T}_{s_3 s_1}  {\bf A}_{s_3}  e^{2\pi i Q_{s_3}  P_{s_1}} 
(\Psi_b(Q_{s_1} + P_{s_3}   -\cancel{ P_{s_1} } }
 \\
& \quad \ul{ +(- Q_{s_3}+\cancel{ P_{s_1} } ) )^{-1}  } 
e^{-2\pi i Q_{s_3} P_{s_1}}   )
e^{-2\pi i Q_{t_3} P_{t_2}}   e^{-2\pi i Q_{t_2}  (Q_{s_3} - P_{s_3})} {\bf A}_{t_2}  e^{2\pi i Q_{t_3}  P_{s_3}}  e^{2\pi i Q_{t_2} P_{s_1}}  e^{-2\pi i Q_{t_2} P_{s_3}} 
 \\
& \hspace{-6mm} \stackrel{\eqref{eq:3_Kashaev_operators2}, \eqref{eq:lifted_Kashaev_relations_major}}{=} \zeta^4 {\bf P}_{(t_1 t_3 t_2) (t_1 s_3) (t_2 s_2)} 
{\bf A}_{t_3} {\bf A}_{t_2} \ulu{ {\bf P}_{(t_3 t_2)}  }{{\rm to~left}}
{\bf A}_{t_1} {\bf A}_{s_2} 
({\bf A}_{s_3} {\bf A}_{s_1} \ulu{ {\bf P}_{(s_3 s_1)} }{{\rm to~left}})
 \\
& \quad \cdot 
e^{-2\pi i Q_{s_3} P_{s_1}}   
e^{-2\pi i Q_{t_3} P_{t_2}}   e^{-2\pi i Q_{t_2} (Q_{s_3} - P_{s_3})} {\bf A}_{t_2}  e^{2\pi i Q_{t_3} P_{s_3}}  e^{2\pi i Q_{t_2} P_{s_1}}  e^{-2\pi i Q_{t_2} P_{s_3}} \\
& \hspace{-2.5mm} \stackrel{\eqref{eq:lifted_Kashaev_relations_P}}{=} \zeta^4 ( {\bf P}_{(t_1 t_3 t_2) (t_1 s_3) (t_2 s_2) (t_3 t_2) (s_3 s_1)}  )
{\bf A}_{t_2} {\bf A}_{t_3} 
{\bf A}_{t_1} {\bf A}_{s_2} 
{\bf A}_{s_1} {\bf A}_{s_3} 
 \\
& \quad \cdot
e^{-2\pi i P_{s_1} Q_{s_3}}   
e^{-2\pi i P_{t_2} Q_{t_3} }   e^{-2\pi i (Q_{s_3} - P_{s_3}) Q_{t_2}  } {\bf A}_{t_2}  e^{2\pi i P_{s_3} Q_{t_3}}  e^{2\pi i P_{s_1} Q_{t_2}}  e^{-2\pi i P_{s_3} Q_{t_2}}.
\end{align*}

Meanwhile, observe that
\begin{align*}
\rho_h(\til{A}_t) \rho_h( \til{A}_s) \rho_h( \til{P}_{(ts)}) & \stackrel{\eqref{eq:3_Kashaev_operators2}}{=} {\bf P}_{(t_1 t_3 t_2)} {\bf A}_{t_1} {\bf A}_{t_2} {\bf A}_{t_3} \ulu{ {\bf P}_{(s_1 s_3 s_2)} }{{\rm to~left}} {\bf A}_{s_1} {\bf A}_{s_2} {\bf A}_{s_3} \ulu{ {\bf P}_{(t_1 s_1)(t_2 s_2)(t_3 s_3)} }{{\rm to~left}} \\
& \stackrel{\eqref{eq:lifted_Kashaev_relations_P}}{=} {\bf P}_{(t_1 t_3 t_2) (s_1 s_3 s_2) (t_1 s_1)(t_2 s_2)(t_3 s_3)} {\bf A}_{t_1} {\bf A}_{t_2} {\bf A}_{t_3} {\bf A}_{s_1} {\bf A}_{s_2} {\bf A}_{s_3},
\end{align*}
as well as
\begin{align*}
& {\bf P}_{(t_1 t_3 t_2) (s_1 s_3 s_2) (t_1 s_1)(t_2 s_2)(t_3 s_3)}^{-1} {\bf P}_{(t_1 t_3 t_2) (t_1 s_3) (t_2 s_2) (t_3 t_2) (s_3 s_1)} \\
& \stackrel{\eqref{eq:lifted_Kashaev_relations_P}}{=} {\bf P}_{(s_3 t_3)(s_2 t_2)(s_1 t_1)(s_2 s_3 s_1)\cancel{ (t_2 t_3 t_1)} \cancel{ (t_1 t_3 t_2)  }(t_1 s_3) (t_2 s_2) (t_3 t_2) (s_3 s_1)}
\stackrel{\eqref{eq:lifted_Kashaev_relations_P}}{=} {\bf P}_{(t_2 s_3)}.
\end{align*}
Therefore $(\rho_h(\til{A}_t) \rho_h(\til{A}_s) \rho_h(\til{P}_{(ts)}))^{-1} \rho_h(\til{T}_{ts}) \rho_h(\til{A}_t) \rho_h(\til{T}_{st})$ equals
\begin{align*}
{\bf K}_{ts} := \zeta^4 {\bf P}_{(t_2 s_3)} e^{-2\pi i P_{s_1} Q_{s_3}}   
e^{-2\pi i P_{t_2} Q_{t_3} }   e^{-2\pi i (Q_{s_3} - P_{s_3}) Q_{t_2}  } {\bf A}_{t_2}  e^{2\pi i P_{s_3} Q_{t_3}}  e^{2\pi i P_{s_1} Q_{t_2}}  e^{-2\pi i P_{s_3} Q_{t_2}}.
\end{align*}
We should now show that conjugation by ${\bf K}_{ts}$ is trivial.

\vs

Indeed, one can easily check by using \eqref{eq:basic_commutation} that
\begin{align*}
& {\bf K}_{ts}^{-1} P_{s_1} {\bf K}_{ts} =  P_{s_1}, \quad
{\bf K}_{ts}^{-1} P_{s_2} {\bf K}_{ts} = P_{s_2}, \quad
{\bf K}_{ts}^{-1} Q_{s_2} {\bf K}_{ts} = Q_{s_2}, \quad
{\bf K}_{ts}^{-1} P_{t_1} {\bf K}_{ts} = P_{t_1}, \\
& {\bf K}_{ts}^{-1} Q_{t_1} {\bf K}_{ts} = Q_{t_1}, \quad {\bf K}_{ts}^{-1} P_{t_2} {\bf K}_{ts} = \cancel{ P_{s_3} } - \cancel{ P_{s_1} } + P_{t_2} + \cancel{ P_{s_1} } - \cancel{ P_{s_3} } = P_{t_2}, \quad
{\bf K}_{ts}^{-1} Q_{t_3} {\bf K}_{ts} = Q_{t_3},
\end{align*}
and
\begin{align*}
{\bf K}_{ts}^{-1} Q_{s_1} {\bf K}_{ts} & = Q_{s_1} + Q_{s_3} + P_{t_2} - Q_{t_3} - \cancel{ Q_{t_2} } + P_{s_1} + \cancel{ Q_{t_2} } - P_{s_3} \stackrel{\eqref{eq:the_crucial_linear_relation}}{=} Q_{s_1}, \\
{\bf K}_{ts}^{-1} P_{s_3} {\bf K}_{ts} & = \cancel{ Q_{t_2} } - P_{t_2} - Q_{s_3} + P_{s_3} + Q_{t_3} - P_{s_1} - \cancel{ Q_{t_2} } + P_{s_3} \stackrel{\eqref{eq:the_crucial_linear_relation}}{=} P_{s_3}, \\
{\bf K}_{ts}^{-1} Q_{s_3} {\bf K}_{ts} & = - P_{t_2} + Q_{t_3} - P_{s_1}  + P_{s_3} \stackrel{\eqref{eq:the_crucial_linear_relation}}{=}  Q_{s_3}, \\
{\bf K}_{ts}^{-1} Q_{t_2} {\bf K}_{ts} & = Q_{s_3} + P_{t_2} - Q_{t_3} + P_{s_1} + Q_{t_2} - P_{s_3} \stackrel{\eqref{eq:the_crucial_linear_relation}}{=} Q_{t_2}, \\
{\bf K}_{ts}^{-1} P_{t_3} {\bf K}_{ts} & =  P_{t_3} - \cancel{ Q_{t_2} } + P_{t_2} + Q_{s_3} - \cancel{ P_{s_3} }  - Q_{t_3} + \cancel{ P_{s_3} } + P_{s_1}  + \cancel{ Q_{t_2} } - P_{s_3} \stackrel{\eqref{eq:the_crucial_linear_relation}}{=} P_{t_3},
\end{align*}
which depend on \eqref{eq:the_crucial_linear_relation}.
\end{proof}


\begin{thebibliography}{FaKas14}

\bibitem[BK00]{BaKi} B. Bakalov and A. Kirillov Jr., {\it On the Lego-Teichm\"uller game}, Transform. Groups 5(2000) 207--244 


\bibitem[Ba01]{B} E. W. Barnes, {\it Theory of the double gamma function}, Phil. Trans. Roy. Soc. A {\bf 196} (1901) 265--388.


\bibitem[CF99]{FC} L. Chekhov and V. V. Fock, {\it A quantum Teichm\"{u}ller space}, Theor. Math. Phys. {\bf 120} (1999) 511--528.

\bibitem[Fa95]{F} L. D. Faddeev, {\it Discrete Heisenberg-Weyl group and modular group}, Lett. Math. Phys. {\bf 34} (1995) 249--254.

\bibitem[FaKas94]{FK} L. D. Faddeev and R. M. Kashaev, {\it Quantum dilogarithm}, Modern Phys. Lett. {\bf A9} (1994) 427--434.

\bibitem[F97]{Fo} V. V. Fock, {\it Dual Teichm\"{u}ller spaces}, arXiv:dg-ga/9702018.

\bibitem[FG03]{FG03} V. V. Fock and A. B. Goncharov, {\it Cluster ensembles, quantization and the dilogarithm}, 
Ann. Sci. Éc. Norm. Supér. (4) {\bf 42} (2009), no. 6, 865–930.


\bibitem[FG06]{FG06} V. V. Fock and A. B. Goncharov, {\em Moduli spaces of local systems and higher Teichm\"uller theory}, Publ. Math. Inst. Hautes Etudes Sci. 103 (2006) 1--211, [math/0311149v4]


\bibitem[FG09]{FG09} V. V. Fock and A. B. Goncharov, {\it The quantum dilogarithm and representations of the quantum cluster varieties}, Inventiones Math. {\bf 175} (2009) 223--286.

\bibitem[FZ02]{FZ} S. Fomin and A. Zelevinsky, {\it Cluster algebras I: Foundations}, J. Amer. Math. Soc. {\bf 15} no.2 (2002) 497--529.


\bibitem[FrKi12]{FrKi} I. B. Frenkel and H. Kim, {\it Quantum Teichm\"uller space from the quantum plane}, Duke Math. J. {\bf 161} no. 2 (2012) 305--366.

\bibitem[FuKap08]{FuKa2} L. Funar and C. Kapoudjian, {\it The braided Ptolemy-Thompson group is finitely presented}, Geometry and Topology {\bf 12} (2008) 475--530.


\bibitem[G07]{G} A. B. Goncharov, ``Pentagon relation for the quantum dilogarithm and quantized $\mcal{M}_{0,5}$'' in {\it Geometry and Dynamics of Groups and Spaces} (Special volume dedicated to the memory of Alexander Reznikov). Progr. Math., vol. 265, pp. 316--329. Birkh\"auser, Basel (2007) (arXiv:math.QA/0706405)

\bibitem[H92]{H92} N. J. Hitchin, {\it Lie groups and Teichm\"uller space}, Topology {\bf 31} no. 3 (1992) 449-473.

\bibitem[I14]{Ip14} I. Ip, {\it On tensor products of positive representations of split real quantum Borel subalgebra $U_{q\til{q}}(b_R)$}, arXiv:1405.4786


\bibitem[Kas98]{Kash98} R. M. Kashaev, {\it Quantization of Teichm\"{u}ller spaces and the quantum dilogarithm}, Lett. Math. Phys. {\bf 43} (1998) 105--115.

\bibitem[Kas00]{Kash00} R. M. Kashaev, ``On the spectrum of Dehn twists in quantum Teichm\"{u}ller theory" in {\it Physics and Combinatorics (Nagoya, 2000)}, World Sci. Publ., River Edge, NJ, 2001, pp. 63--81.

\bibitem[Ki12]{Ki12} H. Kim, {\it The dilogarithmic central extension of the Ptolemy-Thompson group via the Kashaev quantization}, arXiv:1211.4300v3.

\bibitem[P87]{Penner} R. C. Penner, {\it The decorated Teichm\"{u}ller space of punctured surfaces}, Comm. Math. Phys. {\bf 113} (1987) 299--339.

\bibitem[P93]{Penner2} R. C. Penner, {\it Universal constructions in Teichm\"{u}ller theory}, Adv. in Math. {\bf 98} (1993), 143--215.

\bibitem[P12]{PennerBook} R. C. Penner, {\it Decorated Teichm\"uller Theory}, European Math. Soc. Publ., Z\"urich, 2012.

\bibitem[T07]{T} J. Teschner, ``An analog of a modular functor from quantized Teichm\"uller theory'' in {\it Handbook of Teichm\"uller theory} Vol. I, IRMA Lect. Math. Theor. Phys., {\bf 11}, Eur. Math. Soc., Z\"urich, 2007, pp. 685--760. Also arXiv:0510174.




\end{thebibliography}
\end{document}